\newcommand{\noz}{m}
\newcommand{\noi}{n}
\newcommand{\ind}{\mathit{ind}}
\newcommand\ZZ{{\mathbb Z}/2{\mathbb Z}}
\newcommand\Zz{{\mathbb Z}_2}
\newcommand\Z[1]{{\mathbb Z}^{#1}}
\newcommand\R[1]{{\mathbb R}^{#1}}
\newcommand\C[1]{{\mathbb C}^{#1}}
\newcommand{\cH}{{\mathcal H}}
\newcommand{\cM}{{\mathcal M}}
\newcommand{\cQ}{{\mathcalplain Q}}
\newlength{\halfbls}\setlength{\halfbls}{.5\baselineskip}
\newtheorem*{NNTheorem}{Theorem}
\newtheorem{Proposition}{Proposition}
\newtheorem{Lemma}{Lemma}
\newtheorem{Conjecture}{Conjecture}
\theoremstyle{definition}
\newtheorem*{Definition}{Definition}
\newtheorem{Convention}{Convention}
\theoremstyle{remark}
\newtheorem*{NNRemark}{Remark}
\newtheorem{Example}{Example}
\newtheorem*{NNExample}{Example}
\subjclass{32G15, 30F30, 30F60, 37E05, 37E35, 37G99}
\keywords{Teichm\"uller geodesic flow, moduli space of  quadratic
differentials, Jenkins--Strebel differential, spin structure, interval-exchange transformation, Rauzy class}
\date{October 1, 2007}
\begin{document}

\title[Representatives of Abelian and quadratic differentials]
{Explicit Jenkins--Strebel representatives of all strata of Abelian and quadratic differentials}

\author{Anton Zorich}
\address{IRMAR, Universit\'e Rennes 1, Campus de Beaulieu, 35042 Rennes, France}
\email{Anton.Zorich@univ-rennes1.fr}
\urladdr{http://perso.univ-rennes1.fr/anton.zorich}
\thanks{The work was partially supported by ANR grant BLAN06-3\_138280, ``Dynamics in Teichm\"uller space''}

\dedicatory{To G. A. Margulis on his 60th birthday}

\begin{abstract}
Moduli spaces of Abelian and quadratic differentials are stratified
by multiplicities of zeroes; connected components of the strata correspond to
ergodic components of the Teichm\"uller geodesic flow.
It is known that the strata are not necessarily
connected; the connected components were recently classified by
{M. Kontsevich} and the author and by {E. Lanneau}. The strata can be also viewed as families
of flat metrics with conical singularities and with $\ZZ$-holonomy.

For every connected component of each stratum of Abelian and quadratic differentials we construct an explicit representative which is a Jenkins--Strebel differential with a single cylinder. By an elementary variation of this construction we represent almost every Abelian (quadratic) differential in the corresponding connected component of the stratum as a polygon with identified pairs of edges, where combinatorics of identifications is explicitly described.

Specifically, the combinatorics is expressed in terms of a generalized permutation. For any component of any stratum of Abelian and quadratic differentials we  construct a generalized permutation in the corresponding extended Rauzy class.
\end{abstract}
\maketitle

\setcounter{tocdepth}{1}
\tableofcontents

\section{Introduction}

\subsection{Flat surfaces versus Abelian and quadratic differentials}

Consider a collection of vectors $ \vec v_1,  \dots, \vec v_\noi$
in $\R{2}$ and construct from  these  vectors a broken line in  a
natural way:  the $j$th edge of the broken  line is represented by
the vector $\vec{v}_j$. Construct another broken line starting at
the same point as  the initial one by taking the same  vectors in
the order  $\vec{v}_{\pi^{-1}(1)}, \dots, \vec{v}_{\pi^{-1}(\noi)}$,  where $\pi^{-1}$ is a permutation of\/ $\noi$ elements. By construction, the two  broken  lines share the same endpoints;  suppose  that  they bound   a   polygon   like in   Figure \ref{zorich:fig:suspension}. Identifying the pairs  of sides corresponding to the same vectors $\vec{v}_j$, $j=1,  \dots,  \noi$,  by  parallel  translations we obtain a surface endowed with  a  flat  metric.

The  flat metric is nonsingular outside  of  a finite number of cone-type singularities  corresponding  to the vertices of the polygon.  By construction, the flat metric has  trivial  holonomy:  a  parallel transport of a  vector  along a closed path  does  not change the direction   (and   length)  of  the  vector.  This  implies,   in particular, that all cone angles at the singularities are    integer  multiples  of\/ $2\pi$.

\begin{figure}[htb]
\includegraphics{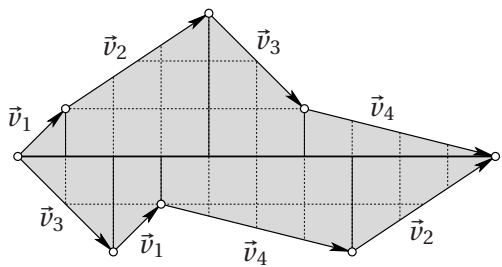}
\begin{picture}(20,2)(190,17)
\put(103,-32){$\vec v_1$}
\put(139,-5){$\vec v_2$}
\put(197,-3){$\vec v_3$}
\put(240,-28){$\vec v_4$}
\put(115,-70){$\vec v_3$}
\put(153,-80){$\vec v_1$}
\put(192,-83){$\vec v_4$}
\put(255,-75){$\vec v_2$}
\end{picture}
\vspace{105bp}
\caption{
\label{zorich:fig:suspension}
Identifying  corresponding  pairs  of  sides of this  polygon  by
parallel translations  we obtain a surface  of genus two. The  flat
metric has trivial holonomy; it has
a single  conical  singularity  with  cone angle $6\pi$.
}
\end{figure}

The  polygon  in  our  construction depends continuously  on  the
vectors $\vec{v}_j$.  This  means that the combinatorial geometry
of the  resulting flat surface  (the genus $g$, the number $\noz$
and the types of the resulting conical singularities) does not change
under small deformations of the vectors  $\vec{v}_j$. This allows us
to consider  a flat surface  as an  element of a  family of  flat
surfaces  sharing  common combinatorial geometry; here we do  not
distinguish isometric flat surfaces.

Choosing a tangent vector at some point of a surface we can transport this
vector to any other point. When the surface has trivial holonomy the result
does not depend on the path, so any direction is globally defined on the
surface. It is convenient to include the choice of direction in the
definition of a flat structure. In particular, we want to distinguish the
flat structure represented by the polygon in Figure
\ref{zorich:fig:suspension} and the one represented by the same polygon
rotated by an angle which is not a multiple of\/ $2\pi$.

Consider a natural coordinate $z$ in the complex plane. In this
coordinate parallel translations which we use to identify the
sides of the polygon in Figure \ref{zorich:fig:suspension} are represented as $z'=z+\mathit{const}$.
Since this correspondence  is holomorphic, it means that our flat
surface  $S$  with punctured conical points inherits a complex
structure. It is easy to check that the complex structure extends
to the punctured points.  Consider  now a holomorphic 1-form $d\!z$
in  the complex plane.  When  we  pass  to the  surface  $S$  the
coordinate  $z$  is not globally defined anymore. However,  since
the changes of  local coordinates are defined as $z'=z+\mathit{const}$, we
see that $d\!z=d\!z'$. Thus, the  holomorphic  1-form  $d\!z$ on $\C{}$
defines  a  holomorphic  1-form  $\omega$ on $S$ which  in  local
coordinates has  the form $\omega=d\!z$. It  is easy to  check that
the form $\omega$ has zeroes exactly at those points of\/ $S$ where
the flat structure has conical singularities.

Conversely, one can
show  that  a pair (Riemann surface, holomorphic  1-form)
uniquely defines a flat structure of the type described above.

In an appropriate local coordinate $w$ a holomorphic 1-form can be
represented in a neighborhood of a zero as $w^d d\!w$, where the power $d$ is
called the \emph{degree} of the zero. The form $\omega$ has a zero of degree
$d$ at a conical point with a cone angle $2\pi(d+1)$. The sum of degrees
$d_1+\dots+d_\noz$ of zeroes of a holomorphic 1-form on a Riemann surface
of genus $g$ equals $2g-2$. The moduli space $\cH_g$ of pairs (complex
structure, holomorphic 1-form) is a $\C{g}$-vector bundle over the moduli
space $\cM_g$ of complex structures. The space $\cH_g$ can be naturally
decomposed into strata $\cH(d_1,\dots,d_\noz)$ enumerated by
unordered\footnote{Not quite unordered: in this paper, the elements $d_1$
and $d_\noz$ (the first one and the last one) play a distinguished role,
see Convention \ref{conv:zeroes:at:the:end} in the next section} partitions
of the number $2g-2$ in a collection of positive integers
$2g-2=d_1+\dots+d_\noz$. Any holomorphic 1-form corresponding to a fixed
stratum $\cH(d_1,\dots,d_\noz)$ has exactly $\noz$ zeroes, and
$d_1,\dots,d_\noz$ are the degrees of zeroes. Note that an individual
stratum $\cH(d_1,\dots,d_\noz)$ in general does not form a fiber bundle
over $\cM_g$.

More details on the geometry and topology of the strata (in particular
studies of a natural Lebesgue measure and ergodicity of the Teichm\"uller
geodesic flow) can be found in the fundamental papers of H. Masur
\cite{M82} and W. Veech \cite{V82,V86,V90}. The bibliography on this
subject currently contains hundreds of papers.

\begin{figure}[ht]
\includegraphics{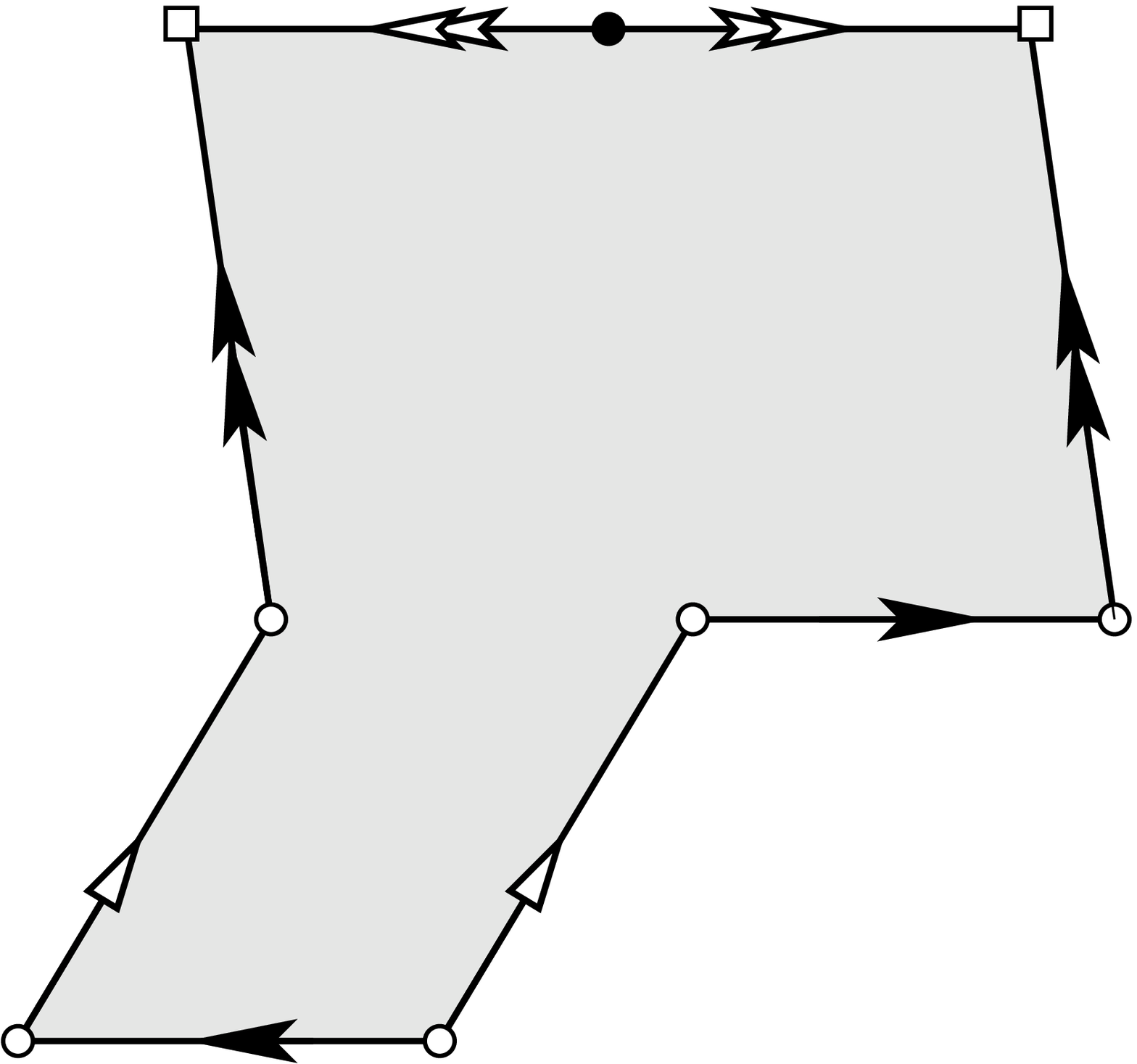}
\vspace{120bp}
\caption{
\label{zorich:half:translation}
Identifying  corresponding  pairs  of  sides by isometries
we obtain a flat surface of genus one with holonomy group $\ZZ$.
The associated quadratic differential belongs to the stratum $\cQ(2,-1,-1)$.
}
\end{figure}

\hspace*{-2.8pt}Similarly, closed flat surfaces with conical singularities, holonomy group $\ZZ$, and a choice of a line field at some point correspond to meromorphic quadratic differentials with at most simple poles. Flat surfaces of this type can be also glued from polygons: the sides of the polygon are again distributed into pairs of parallel sides of equal lengths, but this time the sides might be identified either by a parallel translation or by a central symmetry, see Figure \ref{zorich:half:translation}.

\subsection{Interval-exchange transformations and Rauzy classes}
\label{ss:Interval:exchange:transformations:and:Rauzy:classes}

Consider a flat surface $S$ having trivial linear holonomy. Consider a
family of parallel geodesics  emitted from a transverse segment $X$ and
their first return to $X$. The resulting first-return map is called an
\emph{interval-exchange transformation} $T\colon X\to X$. This map is a
piecewise isometry and it preserves the orientation. The example
below illustrates how interval-exchange transformations can be defined in an intrinsic combinatorial way.

\begin{figure}[hb]
\includegraphics{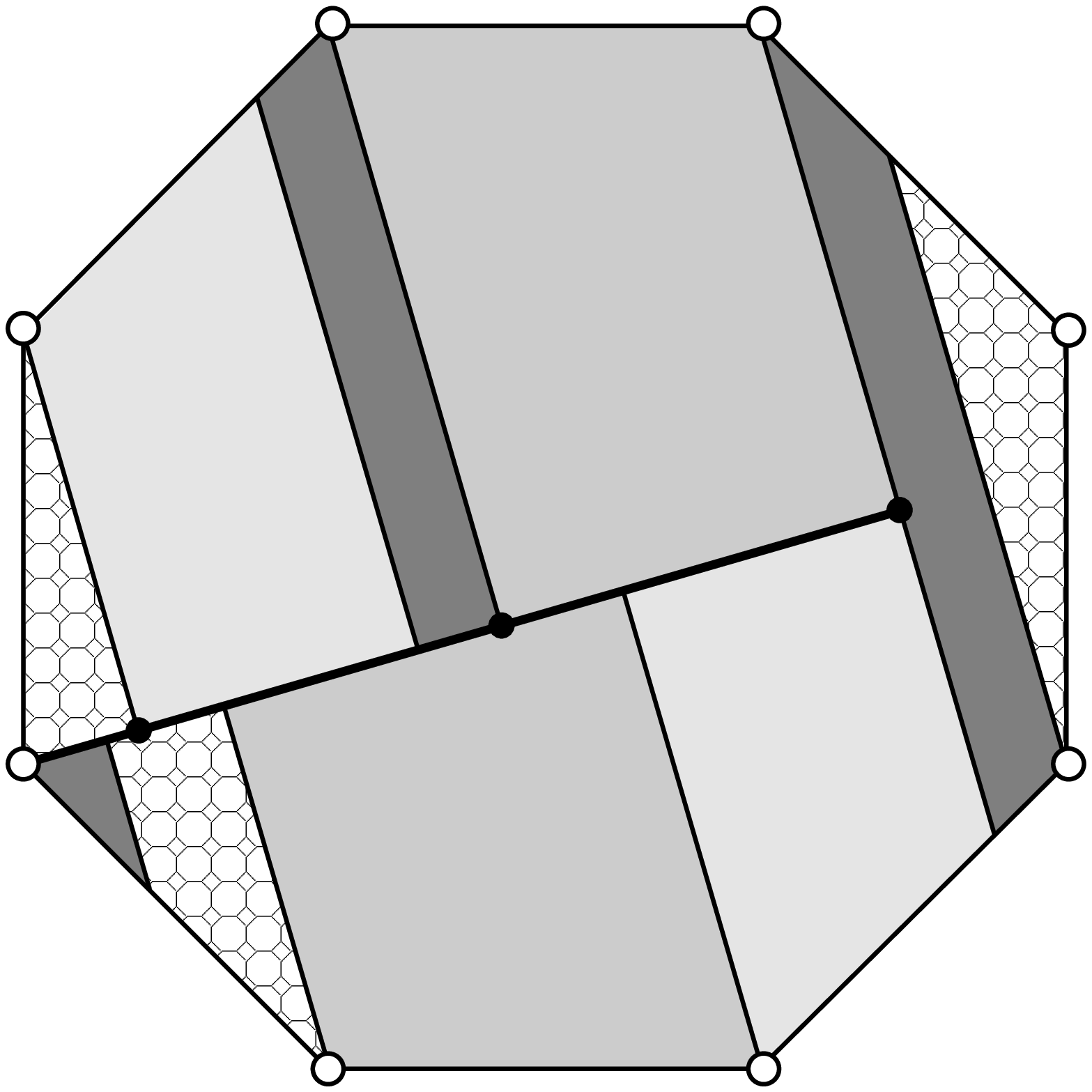}
\begin{picture}(0,0)(65,65)
\put(0,0){$1$}
\put(21,0){$2$}
\put(41,0){$3$}
\put(67,0){$4$}
\put(0,-13){$3$}
\put(10,-13){$1$}
\put(37,-13){$4$}
\put(74,-13){$2$}
\end{picture}
\begin{picture}(0,0)(191,1)
\put(111,-48){$\vec v_1$}
\put(144,-9){$\vec v_2$}
\put(196,-5){$\vec v_3$}
\put(232,-37){$\vec v_4$}
\put(116,-95){$\vec v_4$}
\put(157,-126){$\vec v_3$}
\put(204,-119){$\vec v_2$}
\put(236,-84){$\vec v_1$}
\end{picture}
\vspace{130bp}
\caption{
\label{fig:iet:octagon}
The first-return map $T\colon X\to X$ induced by
the vertical flow on a transverse segment $X$ is an
interval-exchange transformation.
}
\end{figure}

\begin{Example}
\label{ex:iet:2413}
Consider the flat surface from Figure \ref{fig:iet:octagon} and the first-return map of a geodesic flow in the vertical direction to a horizontal interval $X$. We see that this \emph{vertical flow} splits at a cone point and thus the first-return map chops the horizontal interval $X$  into several subintervals placing them back to $X$ in a different order (without overlaps and preserving the orientation). Since in our particular case the cone angle at the single cone point is $6\pi=3\cdot 2\pi$ there  are \emph{three} vertical trajectories  which  hit  the cone point.  The corresponding points at which $X$ is chopped are marked with bold dots. The remaining discontinuity  point  of\/ $X$ corresponds to a trajectory which hits the endpoint of\/ $X$. Subintervals $X_1, \dots, X_4$ (counted from left to right) appear after the first-return map in the order $X_3, X_1, X_4, X_2$. Thus, we can naturally associate a permutation
$$
\pi=\begin{pmatrix}1&2&3&4\\3&1&4&2\end{pmatrix}=(3,1,4,2)^{-1}=(2,4,1,3)
$$
to the corresponding interval-exchange transformation.
\end{Example}

A permutation $\pi$ of\/ $\noi$ elements $\{1, 2, \dots, \noi\}$ is called
\emph{irreducible} if it does not have any invariant proper subsets of the form  $\{1, 2, \dots, k\}$, where $k<\noi$. Having an interval-exchange transformation $T\colon X\to X$ corresponding to an irreducible permutation one
can always construct a \emph{suspension} over the interval-exchange transformation $T$: a flat surface $S$ and  a horizontal segment
$X\subset S$ inside it such that the first return of the vertical
flow to $X$ gives the initial  interval-exchange transformation,
see \cite{M82} or \cite{V82}.

Figure \ref{zorich:fig:suspension} illustrates a construction of a
suspension suggested in \cite{M82}. Namely, considering
vectors $\vec{v}_1, \dots, \vec{v}_\noi$ as complex numbers we
define the vector $\vec{v}_k$ as
$$
\vec{v}_k\dfn |X_k|+\sqrt{-1}\ \cdot\ \big(\pi(k)-k\big), \qquad k=1, \dots, \noi,
$$
where $|X_k|$ is the length of the $k$th subinterval,
and $\pi$ is a permutation defining the interval-exchange transformation. Irreducibility of the permutation $\pi$ implies that two broken lines $v_1, \dots, v_\noi$ and $v_{\pi^{-1}(1)}, \dots, v_{\pi^{-1}(n)}$  define a polygon, and, moreover, that the first broken line is located above the horizontal diagonal, and the second broken line is located below the horizontal diagonal as in Figure \ref{zorich:fig:suspension}. By construction, the first-return map induced by the vertical flow on the horizontal diagonal coincides with the initial interval-exchange transformation.

It is easy to check that any two closed surfaces obtained as suspensions over two interval-exchange transformations sharing the same permutation belong to the same connected component of the same stratum $\cH(d_1, \dots, d_\noz)$, see  \cite{V82}.

In our construction of a suspension $S$ over an interval-exchange transformation $T\colon X\to X$, the endpoints of the segment $X$ are
located at cone points of the flat surface $S$. By construction, the cone
angles at these cone points depend only on the permutation $\pi$ (and not
on lengths $|X_i|$, $i=1,\dots,n$, of the subintervals being exchanged).

\begin{Convention}
\label{conv:zeroes:at:the:end}
Whenever we say in this article that \emph{a permutation $\pi$ represents
a stratum $\cH(d_1,\dots,d_\noz)$}, we always assume that the corresponding
suspension has a singularity of degree $d_1$ at the left endpoint of\/ $X$
and one of degree $d_\noz$ at the right endpoint of\/ $X$.
\end{Convention}

A \emph{saddle connection} is a geodesic segment joining a pair of
cone singularities or a cone singularity  to itself without
any  singularities  in  its  interior.  For  the flat metrics  as
described  above,  regular  closed  geodesics  always  appear  in
families; any such family fills a  maximal  cylinder  bounded  on
each side by a closed saddle connection or by a chain of parallel
saddle connections.

Consider a flat surface $S$ in some stratum $\cH(d_1, \dots, d_\noz)$; by convention it is endowed with a distinguished vertical direction. Assume that the vertical direction is \emph{minimal}, \ie it does not admit any vertical saddle connection. Almost any surface in any stratum satisfies this condition, see \cite{M82,V82}.
A minimal vertical flow endows any horizontal interval $X$ embedded into
$S$ and having no singular points in its interior with an interval exchange
of\/ $\noi, \noi+1$ or $\noi+2$ subintervals, where $\noi=2g+\noz-1$, see,
say, \cite{V82}. By convention, let us always choose the horizontal
interval $X$ so that the induced interval-exchange transformation has the
minimal possible number $\noi$ of subintervals being exchanged.

Taking a union of permutations realized on all  horizontal segments satisfying the above condition we obtain a subset  $\mathfrak{R}_{ex}(S)\in\mathfrak{S}_n$ of the set $\mathfrak{S}_n$ of permutations of\/ $\noi$ elements. The following theorem is a slight reformulation of the results in \cite{V82}.

\begin{NNTheorem}[W. A. Veech]
Consider a flat surface $S$ with trivial holonomy. Suppose that the flow in the vertical direction on $S$ is minimal.
The set $\mathfrak{R}_{ex}(S)$ of permutations of\/ $\noi=2g+\noz-1$ elements realized by interval-exchange transformations induced by the vertical flow on a flat surface $S$ is the same for almost all flat surfaces $S$ in any connected component of any stratum $\cH(d_1, \dots, d_\noz)$. Denoting this set by
$\mathfrak{R}_{ex}$ we get an inclusion $\mathfrak{R}_{ex}(S_0)\subseteq\mathfrak{R}_{ex}$ for any surface $S_0$ in the same connected component of the stratum (provided minimality of the vertical flow on $S_0$).

The sets $\mathfrak{R}_{ex}(S_1)$ and $\mathfrak{R}_{ex}(S_2)$ corresponding to surfaces $S_1, S_2$ (with minimal vertical flows) from different connected components or from different strata do not intersect.
\end{NNTheorem}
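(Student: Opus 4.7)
The plan is to identify $\mathfrak{R}_{ex}$ with a \emph{Rauzy class} --- an orbit of irreducible permutations of $\noi$ letters under Rauzy--Veech induction --- and then use the classical correspondence between Rauzy classes and connected components of strata. Recall that Rauzy--Veech induction applied to an interval-exchange $T\colon X\to X$ with permutation $\pi$ removes the shorter of the two rightmost subintervals and feeds it back under $T$; this produces a shorter segment $X'\subset X$ that induces a new permutation $\pi'$ obtained from $\pi$ by one of two explicit combinatorial rules (with symmetric rules at the left endpoint). Two permutations lie in the same Rauzy class iff they are joined by a finite chain of such moves.

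My first step is to show that for a fixed flat surface $S$ with minimal vertical flow, $\mathfrak{R}_{ex}(S)$ equals the Rauzy class of any one of its elements. The inclusion that the Rauzy class of $\pi$ is contained in $\mathfrak{R}_{ex}(S)$ is iteration of Rauzy--Veech induction: starting from a segment $X$ realizing $\pi$, each move produces an admissible $X'\subset X$, still transverse to the vertical flow, with no singularity in its interior, still exchanging the minimal number $\noi=2g+\noz-1$ of subintervals. For the reverse inclusion, any horizontal segment $X''$ in $S$ exchanging $\noi$ subintervals can be joined to $X$ by a finite sequence of Rauzy moves applied at both endpoints, via a monotone reduction argument using minimality of the vertical flow to guarantee termination. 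Then, in period coordinates on a connected component $\mathcal{C}\subset\cH(d_1,\dots,d_\noz)$, the permutation induced by a fixed segment is locally constant in $S$; by connectedness of $\mathcal{C}$ the associated Rauzy class is the same on an open set of full Lebesgue measure, giving the common value $\mathfrak{R}_{ex}$. For an arbitrary $S_0\in\mathcal{C}$ with minimal vertical flow, any $\pi_0\in\mathfrak{R}_{ex}(S_0)$ is also realized by every surface in a zippered-rectangles neighborhood of $S_0$ inside $\mathcal{C}$; this neighborhood meets the generic set, yielding $\pi_0\in\mathfrak{R}_{ex}$ and proving the stated inclusion.

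For disjointness I would use that the cone-angle partition $(d_1,\dots,d_\noz)$ is reconstructed from any $\pi$ as the cycle structure of the combinatorial pairing that glues the top and bottom edges of the associated zippered rectangles (together with the convention on the degrees $d_1,d_\noz$ at the endpoints of $X$ recorded in Convention \ref{conv:zeroes:at:the:end}). Hence different strata automatically yield disjoint Rauzy classes. Within a fixed stratum, the stronger (and standard) input is that the union of zippered-rectangles coordinate charts indexed by a single Rauzy class covers exactly one connected component of the stratum; hence different connected components give disjoint values of $\mathfrak{R}_{ex}$.

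The main obstacle is the connectivity statement in the first step: any two horizontal segments of $S$ exchanging $\noi$ subintervals can be joined by a chain of Rauzy--Veech moves. One must rule out ``trapped'' endpoint configurations and invoke the density of vertical orbits coming from minimality (Keane's analysis). Once this is in place, the period-coordinate local constancy, the deformation argument giving the inclusion for non-generic $S_0$, and the combinatorial verification that a Rauzy class determines both the stratum and its component are essentially formal consequences of the zippered-rectangles construction of \cite{V82}.
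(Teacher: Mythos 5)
The paper itself offers no proof of this statement: it is presented as ``a slight reformulation of the results in \cite{V82}'' and the burden is carried entirely by Veech's zippered-rectangles theory. Your sketch heads down exactly that road, so the question is whether your outline would actually reconstitute Veech's argument, and here there are genuine gaps. The most serious one is your first step: you assert that for \emph{every} surface $S$ with minimal vertical flow, $\mathfrak{R}_{ex}(S)$ equals the full extended Rauzy class of any of its elements, and you justify the inclusion ``Rauzy class of $\pi$ $\subseteq$ $\mathfrak{R}_{ex}(S)$'' by ``iteration of Rauzy--Veech induction.'' But on a fixed surface the induction does not let you choose which arrow of the Rauzy diagram to follow: the move applied at each stage is dictated by the lengths of the two relevant subintervals, so iterating induction traces out one specific path, not the whole (strongly connected) diagram. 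For non-generic $S$ --- for instance a vertical direction fixed by a pseudo-Anosov on a Veech surface, where the induction path is eventually periodic --- there is no reason the realized set exhausts the class; this is precisely why the theorem claims equality only for \emph{almost all} $S$ and merely the inclusion $\mathfrak{R}_{ex}(S_0)\subseteq\mathfrak{R}_{ex}$ in general. Establishing the almost-everywhere statement requires a genericity or ergodicity input (this is the measure-theoretic heart of \cite{V82}), which your outline does not supply; the ``monotone reduction'' connecting two arbitrary admissible segments, which you yourself flag as the main obstacle, at best shows that all permutations realized on one surface lie in a single extended Rauzy class, not that the whole class is realized.

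Two further steps are not sound as stated. The passage ``locally constant in period coordinates $+$ connectedness of the component $\Rightarrow$ the same Rauzy class almost everywhere'' does not follow: the exceptional set (surfaces with vertical saddle connections, or not admitting a suitable segment) is a closed null set, and a closed null set can perfectly well separate a connected manifold, so full-measure constancy needs the finer structure of the zippered-rectangles charts (or ergodicity of the Teichm\"uller flow), not just connectedness. Finally, for disjointness you invoke as ``standard input'' that the zippered-rectangles charts of a single Rauzy class cover exactly one connected component of a stratum; but that bijection between (extended) Rauzy classes and connected components \emph{is} the content of the theorem you are asked to prove, so citing it here is circular. Reconstructing the stratum data $(d_1,\dots,d_\noz)$ from the permutation (with Convention \ref{conv:zeroes:at:the:end}) is fine and does separate distinct strata, but separating two components of the \emph{same} stratum again needs the full Veech machinery rather than an appeal to the conclusion.
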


A set $\mathfrak{R}_{ex}$ as in the above theorem is called an \emph{extended Rauzy class}. It contains only irreducible permutations. Conversely, let $\pi$ be an irreducible permutation.
We have seen, following constructions of H. Masur \cite{M82} and of W. Veech \cite{V82}, that one can construct a suspension $S(\pi)$ over any interval-exchange transformation corresponding to an irreducible permutation $\pi$, and clearly $\pi\in\mathfrak{R}_{ex}(S(\pi))$. Hence, any irreducible permutation $\pi$ belongs to an extended Rauzy class representing a connected component of a stratum embodying $S(\pi)$.

Thus, the set of all irreducible permutations decomposes into a disjoint union of extended Rauzy classes, and the theorem of Veech
establishes a one-to-one correspondence between connected components of strata of Abelian differentials and extended Rauzy classes.

Actually, an extended Rauzy class has an alternative, purely combinatorial (and much more constructive) definition as a minimal collection of irreducible permutations invariant under three explicit combinatorial operations. Two operations were introduced by G. Rauzy in \cite{Rauzy} and an additional one was introduced by W. A. Veech; see also Appendix \ref{a:generalized:rauzy:operations}. Applying this combinatorial approach W. Veech and P. Arnoux have decomposed irreducible permutations of a small number of elements in extended Rauzy classes and have found the first examples $\cH(4)$ and $\cH(6)$ of strata having several connected components.

\begin{NNRemark}
Some irreducible permutations give rise to strata with marked points. For example, if an interval-exchange transformation maps two consecutive intervals under exchange to two consecutive (in the same order) intervals, the corresponding suspension gets a ``fake singularity''. We tacitly avoid this type of permutation in the current paper.
\end{NNRemark}

\subsection{Analogs of interval-exchange transformations for quadratic differentials. Generalized permutations}
\label{ss:analogs:of:iet}
In many aspects the constructions of the previous section can be generalized to quadratic differentials. Consider a flat surface with holonomy group $\ZZ$ and a an oriented segment $X$ transverse to the vertical foliation. Since the vertical foliation is nonorientable, we emit trajectories from $X$ both in upward and downward directions. Making a slit along $X$ we get two shores $X^+$ and $X^-$ of the slit; we emit trajectories ``downward'' from the ``bottom'' shore $X^-$ and ``upward'' from the ``top'' shore $X^+$. We get a well-defined first-return map $T$ which is a piecewise isometry of\/ $X^+\sqcup X^-$ to itself. Each of the two copies $X^+$ and $X^-$ of\/ $X$ inherit an orientation of\/ $X$. When the image of a subinterval gets to the opposite shore
(as in the previous section) it preserves the orientation; when it gets to the same shore it changes the orientation.

Consider the natural partitions $X^+=X_1\sqcup\dots\sqcup X_r$ and
$X^-=X_{r+1}\sqcup\dots\sqcup X_s$, where each $X_i$ is a maximal subinterval of continuity of the map $T$. By construction, the map $T\colon X^+\sqcup X^-\to X^+\sqcup X^-$ is an involution which does not map any interval to itself, in particular, the total number $s$  of subintervals is even, $s=2\noi$. Denoting subintervals in each pair in involution by identical symbols we encode  combinatorics of the map $T$ by two lines of symbols in such way that every symbol appears exactly twice. We call such combinatorial data a \emph{generalized permutation}.

\begin{figure}[htb]
\includegraphics{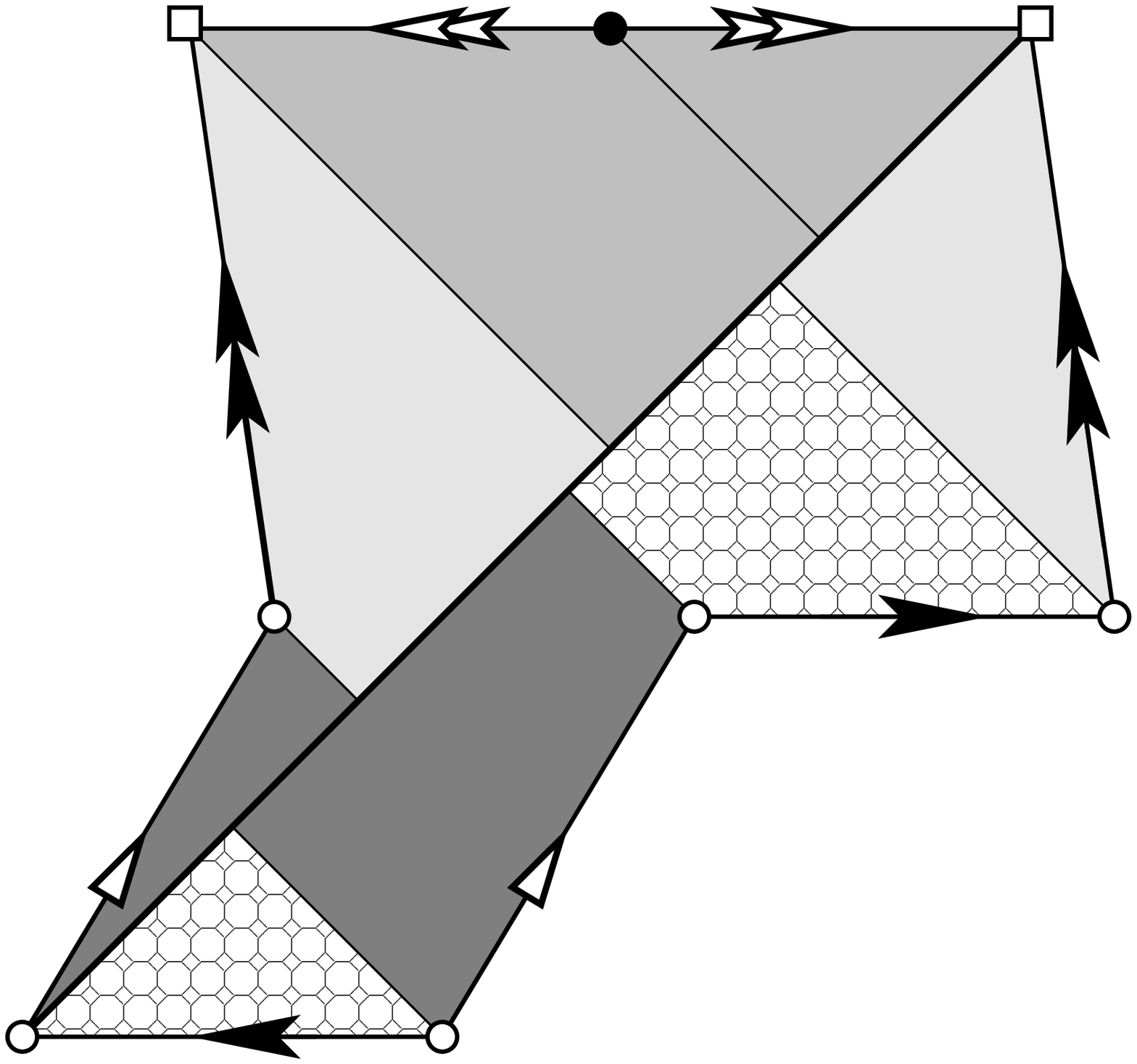}
\begin{picture}(0,0)(65,57)
\put(30,0){$1$}
\put(55,0){$2$}
\put(90,0){$3$}
\put(121,0){$3$}
\put(4,-13){$4$}
\put(42,-13){$1$}
\put(85,-13){$4$}
\put(118,-13){$2$}
\end{picture}
\vspace{110bp}
\caption{
\label{fig:generalized:iet}
Generalized
interval-exchange transformation.
}
\end{figure}

\begin{NNExample}
For a surface $S$ and a horizontal segment $X$ as in Figure \ref{fig:generalized:iet} the vertical foliation defines on two shores of\/ $X$  a generalized interval-exchange transformation with a generalized permutation
$$
\pi=\begin{pmatrix}1&2&3&3\\4&1&4&2\end{pmatrix}.
$$
Note, however, that the cardinalities of the upper and lower lines of a generalized permutation are in general different.
\end{NNExample}

One can define an \emph{irreducible} generalized permutation. This notion
is not quite elementary: an adequate combinatorial definition was
elaborated only recently by C. Boissy and E. Lanneau in \cite{BL}. We do
not reproduce this definition since in our paper we basically consider only
generalized permutations of a special form \eqref{eq:cylindric:permutation}
below which are always irreducible.

Any irreducible generalized permutation defines a family of generalized
inter\-val-exchange transformations; every interval-exchange transformation
in this family admits a suspension. Assume that our irreducible generalized
permutation is not a \emph{true} permutation. Then the flat surface $S$
obtained as a suspension has nontrivial holonomy group $\ZZ$. The connected
component of the stratum $\cQ(d_1, \dots, d_\noz)$, embodying $S$ is
uniquely determined by the generalized permutation. The collection of
generalized permutations of\/ $\noi=2g+\noz-1$ symbols corresponding to a
given connected component of a given stratum is again called an
\emph{extended Rauzy class}; it can be defined either implicitly by a
theorem analogous to the theorem of W. A. Veech cited above, or by an
effective combinatorial construction (minimal nonempty collection of
irreducible generalized permutations invariant under Rauzy operations) due
to C. Boissy and E. Lanneau. We refer the reader to their paper \cite{BL}
for a comprehensive study of relations between the combinatorics, geometry
and dynamics of generalized permutations and their Rauzy classes (see also
the outline in Appendix \ref{a:generalized:rauzy:operations}).

Generalizations of interval-exchange transformations corresponding to measured foliations on \emph{nonorientable} surfaces were studied by {C. Danthony} and
A.~Nogueira in \cite{DN}.

\subsection{Jenkins--Strebel differentials with a single cylinder}
An Abelian or a quadratic differential is called a \emph{Jenkins--Strebel
differential} if the union of critical leav\-es and critical points of its
horizontal foliation is compact. Equivalently, a differential is
Jenkins--Strebel if and only if any nonsingular horizontal leaf is closed.
In other words, the corresponding flat surface is glued from a finite
number of maximal flat cylinders filled with closed horizontal leaves. In
this paper we consider the special case when the Jenkins--Strebel
differential is represented by a \emph{single} flat cylinder $C$ filled
by closed horizontal leaves. Note that all zeroes and poles (critical
points of the horizontal foliation) of such differential are located on the
boundary of this cylinder.

Each of the two boundary components $\partial C^+$ and $\partial C^-$ of the cylinder is subdivided into a collection of horizontal saddle connections $\partial C^+=X_{\alpha_1}\sqcup\dots\sqcup X_{\alpha_r}$ and $\partial C^-=X_{\alpha_{r+1}}\sqcup\dots\sqcup X_{\alpha_s}$. The subintervals are naturally organized in pairs of subintervals of equal length; subintervals in every pair are identified by a natural isometry which preserves the orientation of the surface. Denoting both subintervals in the pair representing the same saddle connection by the same symbol, we can naturally encode the combinatorics of  identification of the boundaries of the cylinder by two lines of symbols,

\begin{equation}
\label{eq:js:prepermutation}
\begin{picture}(0,0)(-2,0)
\put(-3,10){\vector(1,0){0}}
\put(-5,15){\oval(10,10)[bl]}
\put(30,15){\oval(80,10)[t]}
\put(65,15){\oval(10,10)[br]}
\put(-3,-4){\vector(1,0){0}}
\put(-5,-9){\oval(10,10)[tl]}
\put(40,-9){\oval(100,10)[b]}
\put(85,-9){\oval(10,10)[tr]}
\end{picture}
\begin{matrix}\alpha_1&\dots&\alpha_r&\\ \alpha_{r+1}&\dots&\dots&\alpha_s\end{matrix}
\vspace{8pt}
\end{equation}

\noindent
where symbols in each line are organized in a cyclic order. By construction, every symbol appears exactly twice. If all the symbols in each line are distinct, the resulting flat surface has trivial linear holonomy and corresponds to an Abelian differential. Otherwise a flat metric of the resulting closed surface has holonomy group $\ZZ$; in the latter case it corresponds to a meromorphic quadratic differential with at most simple poles.

\begin{figure}[htb]
\includegraphics{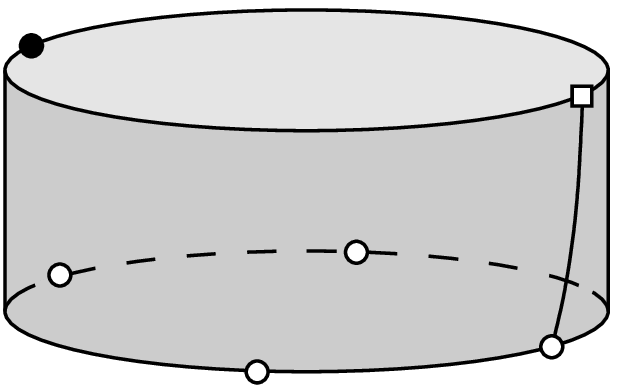}
\includegraphics{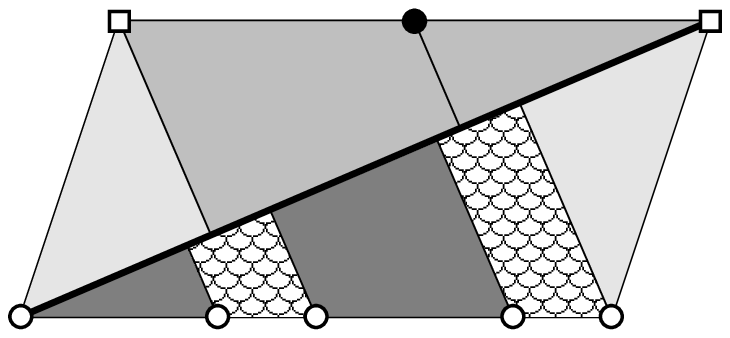}
\includegraphics{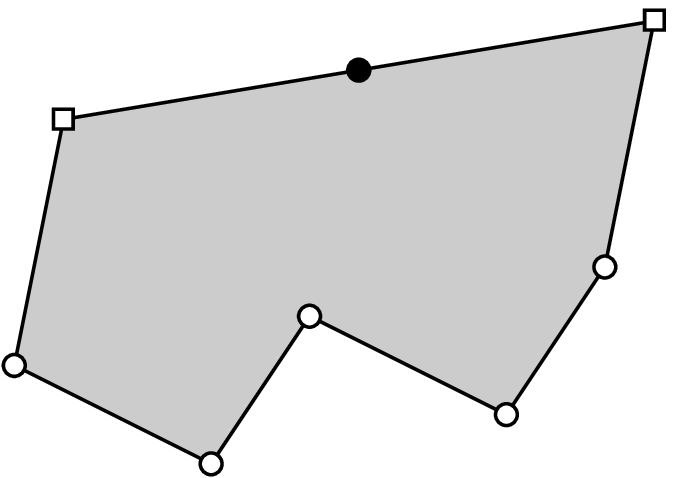}
\begin{picture}(0,0)(160,10)
\put(40,7){$X_1$}
\put(25,-10){$X_1$}
\put(17,-29){$X_3$}
\put(64,-27){$X_0$}
\put(0,-54){$X_2$}
\put(48,-56){$X_3$}
\put(76,-51){$X_2$}
\end{picture}
\begin{picture}(0,0)(50,-100)
\put(25,-112){1}
\put(69,-112){1}
\put(2,-168){2}
\put(24,-168){3}
\put(46,-168){2}
\put(67,-168){3}
\put(-9,-140){0}
\put(87,-140){0}
\end{picture}
\begin{picture}(0,0)(-73,-100)
\put(25,-112){1}
\put(67,-104){1}
\put(8,-169){2}
\put(35,-164){3}
\put(49,-162){2}
\put(77,-157){3}
\put(-7,-140){0}
\put(90,-128){0}
\end{picture}
\vspace{75bp}
\caption{
\label{fig:Jenkins:Strebel}
A Jenkins--Strebel differential with a single cylinder, one of its parallelogram  patterns, and its deformation inside the embodying stratum.
}
\end{figure}

Combinatorial data encoding identifications of the boundary components of the cylinder resembles a generalized permutation defined in the previous section. Choose a singular point on each of the two boundary components of the cylinder and join the two points by a geodesic segment $X_{\alpha_0}$, as in Figure \ref{fig:Jenkins:Strebel}. For example, choose the left endpoint of\/ $X_{\alpha_1}$ on the upper boundary component $\partial C^+$ and the left endpoint of\/ $X_{\alpha_{r+1}}$  on the lower boundary component $\partial C^-$. Cutting our metric cylinder by $X_{\alpha_0}$ we unfold our flat surface into a parallelogram. Consider a diagonal of this parallelogram and a direction transverse to this diagonal. The induced generalized interval-exchange transformation corresponds to one of the following two generalized permutations
\begin{equation}
\label{eq:cylindric:permutation}
\begin{pmatrix}\alpha_0&\alpha_1&\dots&\alpha_r&&\\ &\alpha_{r+1}&\dots&\dots&\alpha_s&\alpha_0\end{pmatrix}
\qquad\text{or}\qquad
\begin{pmatrix}&\alpha_1&\dots&\alpha_r&&\alpha_0\\ \alpha_0&\alpha_{r+1}&\dots&\dots&\alpha_s\end{pmatrix}
\end{equation}
representing two possible choices of a diagonal of the parallelogram.

\begin{NNExample}

Consider a Jenkins--Strebel differential with a single cylinder, a cutting segment $X_0$ and a diagonal of the resulting parallelogram as in Figure \ref{fig:Jenkins:Strebel}. The foliation orthogonal to the diagonal defines a generalized interval-exchange transformation on the two sides of the diagonal with a generalized permutation
\begin{equation}
\label{eq:011:23230}
\pi=\begin{pmatrix}
0&1&1&&\\
2&3&2&3&0
\end{pmatrix}.
\end{equation}
\end{NNExample}

\begin{NNRemark}
Note that neither of the two lines of structure \eqref{eq:js:prepermutation} has a distinguished element. Thus
there is no distinguished generalized permutation associated to a Jenkins--Strebel differential with a single cylinder: before adding an extra symbol $\alpha_0$ as in \eqref{eq:cylindric:permutation} we can cyclically move the elements in any of the two lines.

Moreover, if our flat surface is represented by a quadratic (and not Abelian) differential, there is no canonical way to assign the notions of ``top'' and ``bottom'' boundary components $\partial C^+$ and $\partial C^-$ of the cylinder. Thus, there is no canonical choice between the two structures
\vskip2pt$$
\begin{picture}(0,0)(-2,0)
\put(-3,10){\vector(1,0){0}}
\put(-5,15){\oval(10,10)[bl]}
\put(30,15){\oval(80,10)[t]}
\put(65,15){\oval(10,10)[br]}
\put(-3,-4){\vector(1,0){0}}
\put(-5,-9){\oval(10,10)[tl]}
\put(40,-9){\oval(100,10)[b]}
\put(85,-9){\oval(10,10)[tr]}
\end{picture}
\begin{matrix}\alpha_1&\dots&\alpha_r&\\ \alpha_{r+1}&\dots&\dots&\alpha_s\end{matrix}
\qquad\qquad\qquad
\begin{picture}(0,0)(-2,0)
\put(-3,10){\vector(1,0){0}}
\put(-5,15){\oval(10,10)[bl]}
\put(40,15){\oval(100,10)[t]}
\put(85,15){\oval(10,10)[br]}
\put(-3,-4){\vector(1,0){0}}
\put(-5,-9){\oval(10,10)[tl]}
\put(30,-9){\oval(80,10)[b]}
\put(65,-9){\oval(10,10)[tr]}
\end{picture}
\begin{matrix}\alpha_{s}&\dots&\dots&\alpha_{r+1}\\
\alpha_r&\dots&\alpha_1&
\end{matrix}
\vspace{8pt}
$$
\end{NNRemark}

\begin{NNExample}

All generalized permutations below correspond to the same Jenkins--Strebel differential with a single cylinder:

\begin{equation*}
\begin{pmatrix}0,1,2,1,2,3\\3,4,4,0\end{pmatrix},\
\begin{pmatrix}0,3,1,2,1,2\\4,3,4,0\end{pmatrix},\
\begin{pmatrix}0,4,4,3\\3,2,1,2,1,0\end{pmatrix},\
\begin{pmatrix}0,3,4,4\\2,1,2,1,3,0\end{pmatrix}
\end{equation*}
\end{NNExample}

\subsection{From cylindrical generalized permutations to Jenkins--Strebel differentials and polygonal patterns of flat surfaces}
\label{ss:generailzed:permutation}

It is convenient to formalize the combinatorial data above in the
following definitions.

\begin{Definition}
Consider an alphabet $\{\alpha_0, \alpha_1, \dots\}$. A \emph{generalized permutation} is an ordered pair
$$
\begin{pmatrix}\alpha_{i_0}&\alpha_{i_1}&\dots&\alpha_{i_r}&\\ \alpha_{i_{r+1}}&\dots&\dots&\dots&\alpha_{i_s}\end{pmatrix}
$$
of nonempty finite words (usually called ``lines'') $\{\alpha_{i_0},\alpha_1,\dots,\alpha_{i_r}\}$,
$\{\alpha_{i_{r+1}},\dots,\alpha_{i_s}\}$ satisfying the following condition: every symbol present in at least one of the words
appears exactly one more time either in the same word or in the other one.
\end{Definition}

A bijection between alphabets $\{\alpha_0, \alpha_1, \dots\}$ and
$\{\beta_0, \beta_1, \dots\}$ (including a bijection of an alphabet to itself) induces natural bijection between generalized permutations in letters $\alpha_i$ and generalized permutations in letters $\beta_i$. Unless stated explicitly, we will identify the corresponding permutations.

\begin{Definition}
A generalized permutation is called \emph{cylindrical} if the first symbol of one of the lines coincides with the last symbol of the complementary line ({see \eqref{eq:cylindric:permutation}})
and if, moreover, the set of all symbols in either of two lines does not form a proper subset of the set of symbols in a complementary line.
\end{Definition}
Note that we allow the unions of symbols in the lines of a cylindrical generalized permutation to coincide: in this case we get a true permutation.

From now on we shall consider only cylindrical generalized permutations.
More general generalized permutations will reappear only in Section
\ref{ss:exceptional:strata} and in the appendices. In particular, any
cylindrical generalized permutation is necessarily \emph{irreducible},
see \cite{BL}. For practical purposes, this means that we can always
construct a ``suspension'' over a cylindrical generalized permutation; this
construction is described in the next several paragraphs.

Consider a cylindrical generalized permutation $\pi$. Let $\{\beta_1,
\dots, \beta_p\}$ be the symbols which are present only in the bottom line
(as symbols ``$2,3$'' in generalized permutation \eqref{eq:011:23230}
above), and let $\{\gamma_1, \dots, \gamma_q\}$ be the symbols which are
present only in the top line (as symbol ``1'' in the generalized permutation
\eqref{eq:011:23230} above). Our definition of a cylindrical generalized
permutation implies that either these sets are both empty, and so $\pi$ is
a true permutation, or they are both nonempty. We are especially interested
in the latter case. Consider a generalized interval-exchange transformation
$T$ corresponding to $\pi$. We can choose any lengths for the subintervals
being exchanged provided they satisfy the linear relation
\begin{equation}
\label{eq:relation:top:bottom}
|X_{\gamma_1}|+\dots+|X_{\gamma_p}|=|X_{\beta_1}|+\dots+|X_{\beta_q}|.
\end{equation}
Clearly for any such generalized interval-exchange transformation we can perform a construction inverse to the one described in the previous section and realize a ``suspension'' by a Jenkins--Strebel differential with a single cylinder, such as in the middle and the left pictures in Figure \ref{fig:Jenkins:Strebel}. (This observation is due to E.~Lanneau, \cite{L}.)

Finally, note that we can consider a parallelogram constructed in our suspension as a polygon similar to the ones in Figures \ref{zorich:fig:suspension} through \ref{fig:generalized:iet}. The polygon is obtained from two broken lines
$\vec v_{\alpha_0},\vec v_{\alpha_1}, \dots, \vec v_{\alpha_r}$ and
$\vec v_{\alpha_{r+1}},\vec v_{\alpha_1}, \dots, \vec v_{\alpha_s}, \vec v_{\alpha_0}$, where all the vectors different from $\vec v_{\alpha_0}$ are horizontal. These vectors satisfy a relation analogous to relation \eqref{eq:relation:top:bottom}, namely:
$$
\vec v_{\gamma_1}+\dots+\vec v_{\gamma_p}=\vec v_{\beta_1}+\dots+\vec v_{\beta_q}.
$$

Deforming all the vectors slightly by a deformation respecting the above relation (see the right picture in Figure \ref{fig:Jenkins:Strebel}) we obtain a small open neighborhood of the initial Jenkins--Strebel differential in the embodying stratum $\cQ(d_1, \dots, d_\noz)$. In other words, an open set of flat surfaces in $\cQ(d_1, \dots, d_\noz)$ can be obtained by identification of pairs of corresponding sides of a polygon
as on the right picture in Figure \ref{fig:Jenkins:Strebel}. The combinatorics of the polygon (and of the identifications of pairs of sides) is fixed by the initial generalized permutation $\pi$. Note that the property ``a flat surface $S$ can be glued from a polygon of fixed combinatorics $\pi$ '' is $GL(2,\R{})$-invariant. Thus, by ergodicity of the $SL(2;\R{})$-action, we get the following simple observation.

\begin{Proposition}
If a flat surface $S$ can be unfolded to a polygon having combinatorial structure represented by a cylindrical generalized permutation, then almost any flat surface in the same connected component of the embodying stratum can be unfolded to a polygon sharing the same combinatorial structure.
\end{Proposition}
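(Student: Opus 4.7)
The plan is to identify the set $\mathcal{U}_\pi$ of flat surfaces in the connected component of $S$ admitting an unfolding with combinatorial data $\pi$, verify that $\mathcal{U}_\pi$ is open and $GL(2,\R{})$-invariant, and then invoke ergodicity of the $SL(2,\R{})$-action to conclude that $\mathcal{U}_\pi$ has full measure in that component.

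First, I would establish openness of $\mathcal{U}_\pi$. The construction preceding the proposition already provides explicit local coordinates near each point of $\mathcal{U}_\pi$: a surface in $\mathcal{U}_\pi$ is parametrized by a tuple of vectors $(\vec v_{\alpha_0},\vec v_{\alpha_1},\dots,\vec v_{\alpha_s})$ subject to the single linear relation
$$
\vec v_{\gamma_1}+\dots+\vec v_{\gamma_p}=\vec v_{\beta_1}+\dots+\vec v_{\beta_q}.
$$
The edge identifications are dictated by $\pi$, and the cone angles of the resulting surface depend only on $\pi$ (not on the lengths or directions of the $\vec v_{\alpha_i}$). Consequently, every sufficiently small deformation of the defining vectors respecting this relation yields a flat surface in the same stratum and in $\mathcal{U}_\pi$; hence $\mathcal{U}_\pi$ is open.

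Second, I would check $GL(2,\R{})$-invariance. If $S'\in\mathcal{U}_\pi$ is realized by a polygon $P\subset\R{2}$ with identification pattern $\pi$, then for any $g\in GL(2,\R{})$ the polygon $g(P)$ has the same edge labels and the same identification types (parallel translations or central symmetries) encoded by $\pi$, since an invertible linear map preserves both parallelism and central symmetry. Thus $g\cdot S'\in\mathcal{U}_\pi$. The set is likewise preserved by the scaling action, so one may restrict to the area-one hyperboloid.

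Finally, I would invoke the Masur--Veech theorem: the $SL(2,\R{})$-action on the area-one locus of each connected component of the stratum is ergodic with respect to the natural finite measure \cite{M82,V86}. Since $\mathcal{U}_\pi$ restricted to the area-one locus is open, nonempty (it contains a rescaling of $S$), and $SL(2,\R{})$-invariant, ergodicity forces it to have full measure. Hence almost every flat surface in the connected component can be unfolded to a polygon with combinatorial structure $\pi$. The argument is essentially formal, with no serious obstacle; the only point deserving care is checking that $\pi$ records the identifications in a $GL(2,\R{})$-invariant manner—but since a linear map of $\R{2}$ sends parallel translations to parallel translations and central symmetries to central symmetries, this invariance holds unconditionally.
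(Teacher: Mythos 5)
Your proposal is correct and follows essentially the same route as the paper: the paper likewise observes that deformations of the defining vectors respecting the single linear relation give an open set of surfaces glued from a polygon with combinatorics $\pi$, that this property is $GL(2,\R{})$-invariant, and then concludes by ergodicity of the $SL(2,\R{})$-action on the connected component. Your write-up merely makes the openness and invariance checks slightly more explicit than the paper does.
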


\subsection{Goal of the paper}

The main goal of the current paper is to explicitly construct a cylindrical generalized permutation representing any given connected component of any given stratum of Abelian or quadratic differentials. As was shown in the previous section, such a permutation immediately provides us with a Jenkins--Strebel differential with a single cylinder in the corresponding connected component, a polygonal representation of almost any flat surface in the corresponding connected component, and a representative of the corresponding extended Rauzy class.
Our main tool is the geometry of ribbon graph representations of Jenkins--Strebel differentials (see \cite{Kontsevich:Airy:Function} for more details) combined with elementary combinatorics of cylindrical permutations.

\begin{NNRemark}
It was proved by
A. Douady and J. Hubbard that
Jenkins--Strebel differentials are dense in the principal stratum $\cQ(1,\dots,1)$ of quadratic differentials, see \cite{DH}. This result was strengthened in
by H. Masur in \cite{M79} who proved that Jenkins--Strebel differentials \emph{with a single cylinder} are also dense in $\cQ(1,\dots,1)$. The statement on the density of
Jenkins--Strebel differentials with a \emph{single} cylinder
was extended by M. Kontsevich and the author to any stratum of Abelian differentials, see \cite{KZ03}. The latter proof was generalized by E. Lanneau in \cite{L} for any stratum of meromorphic quadratic differentials with at most simple poles.

\begin{figure}[htb]
\includegraphics{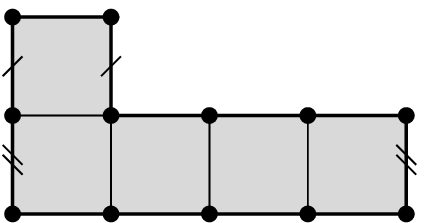}
\includegraphics{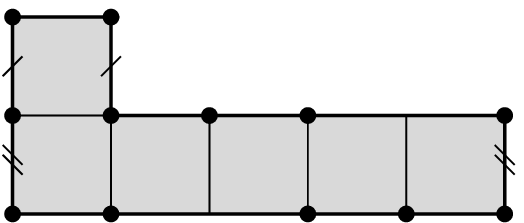}
\includegraphics{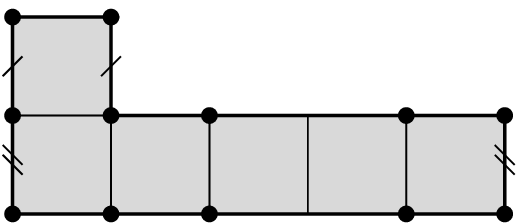}
\includegraphics{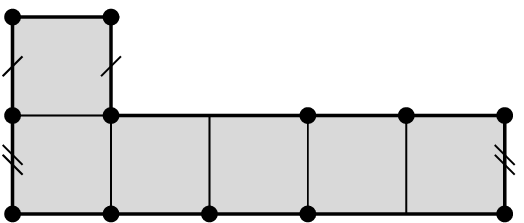}
\begin{picture}(0,0)
\begin{picture}(0,0)
\put(-168,-7){\scriptsize $1$}
\put(-153,-22){\scriptsize $2$}
\put(-139,-22){\scriptsize $3$}
\put(-125,-22){\scriptsize $4$}
\put(-168,-47){\scriptsize $1$}
\put(-153,-47){\scriptsize $4$}
\put(-139,-47){\scriptsize $3$}
\put(-125,-47){\scriptsize $2$}
\end{picture}
\begin{picture}(0,0)(-83,0)
\put(-168,-7){\scriptsize $1$}
\put(-153,-22){\scriptsize $2$}
\put(-139,-22){\scriptsize $3$}
\put(-119,-22){\scriptsize $4$}
\put(-168,-47){\scriptsize $1$}
\put(-147,-47){\scriptsize $4$}
\put(-125,-47){\scriptsize $3$}
\put(-111,-47){\scriptsize $2$}
\end{picture}
\begin{picture}(0,0)(-176,0)
\put(-168,-7){\scriptsize $1$}
\put(-153,-22){\scriptsize $2$}
\put(-133,-22){\scriptsize $3$}
\put(-111,-22){\scriptsize $4$}
\put(-168,-47){\scriptsize $1$}
\put(-153,-47){\scriptsize $4$}
\put(-133,-47){\scriptsize $3$}
\put(-111,-47){\scriptsize $2$}
\end{picture}
\begin{picture}(0,0)(-268,0)
\put(-168,-7){\scriptsize $1$}
\put(-146,-22){\scriptsize $2$}
\put(-125,-22){\scriptsize $3$}
\put(-111,-22){\scriptsize $4$}
\put(-168,-47){\scriptsize $1$}
\put(-153,-47){\scriptsize $4$}
\put(-139,-47){\scriptsize $3$}
\put(-118,-47){\scriptsize $2$}
\end{picture}
\end{picture}
\vspace{55bp}
\caption{
\label{fig:6tiled:surfaces}
$GL(2,\R{})$-orbits of these surfaces are closed and do not contain Jenkins--Strebel differentials with a single cylinder.
}
\end{figure}

Nevertheless, a closed $SL(2,\R{})$-invariant suborbifold of a stratum might contain no Jenkins--Strebel differentials with a single cylinder. As usual, counterexamples might be found among orbits of  arithmetic Veech surfaces. For example, the four surfaces presented in Figure \ref{fig:6tiled:surfaces} belong to four distinct $SL(2,\R{})$-orbits  in the connected component $\cH^{\mathit{hyp}}(4)$. These orbits contain correspondingly $15$, $15$, $10$ and $10$ square-tiled surfaces. None of them are composed of a single cylinder, which implies that none of the corresponding  $SL(2,\R{})$-orbits contain a Jenkins--Strebel differential with a single cylinder.
\end{NNRemark}

\subsection{Idea of construction}
\label{ss:idea:of:construction}

We complete the introduction by an illustration of the main idea of our construction in a simple particular case. Consider a Jenkins--Strebel differential with a single cylinder. Suppose for simplicity that the resulting flat surface has trivial linear holonomy. In the previous section we have represented a Jenkins--Strebel differential as a cylinder with some identifications of the boundary.

\begin{figure}[htb]
\includegraphics{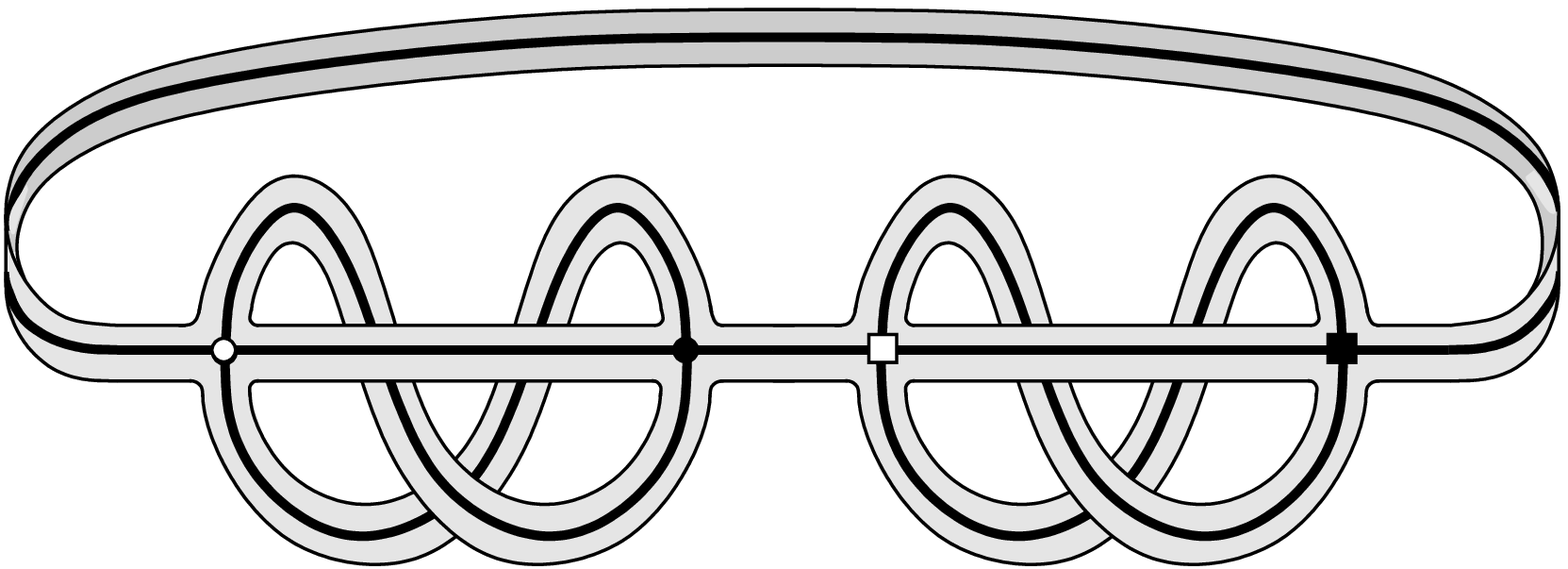}
\includegraphics{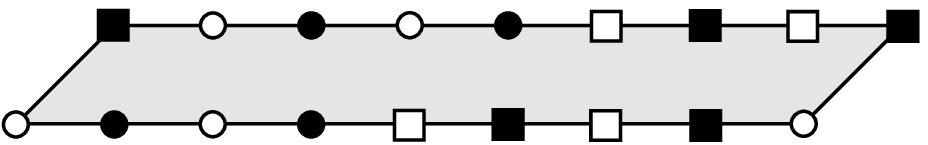}
\includegraphics{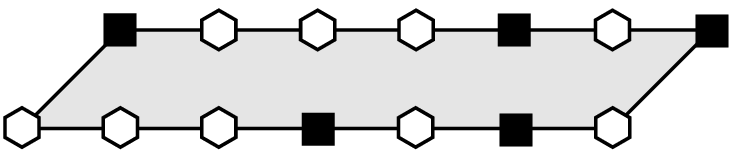}
\begin{picture}(0,0)(35,-89)
\put(-88,-138){$X_1$}
\put(-59,-112){$X_2$}
\put(-32,-138){$X_3$}
\put(-1,-112){$X_4$}
\put(30,-138){$X_5$}
\put(61,-112){$X_6$}
\put(90,-138){$X_7$}
\put(120,-112){$X_8$}
\put(146,-138){$X_1$}
\end{picture}

\begin{picture}(0,0)(3,-13)
\begin{picture}(0,0)(0,5)
\put(-88,-112){1}
\put(-60,-112){2}
\put(-32,-112){3}
\put(-29,-109){\circle{10}}
\put(-4,-112){4}
\put(24,-112){5}
\put(27,-109){\circle{10}}
\put(53,-112){6}
\put(82,-112){7}

\put(110,-112){8}
\end{picture}
\begin{picture}(0,0)(28,5)
\put(-89,-155){4}
\put(-60,-155){3}
\put(-57,-152){\circle{10}}
\put(-32,-155){2}
\put(-4,-155){5}
\put(-1,-152){\circle{10}}
\put(24,-155){8}
\put(53,-155){7}

\put(82,-155){6}
\put(110,-155){1}
\end{picture}
\begin{picture}(0,0)(3,5)
\put(-122,-131){0}
\put(115,-136){0}
\end{picture}
\end{picture}

\begin{picture}(0,0)(-3,-26)
\begin{picture}(0,0)(-27,75)
\put(-88,-112){1}
\put(-60,-112){2}
\put(-32,-112){4}
\put(-4,-112){6}
\put(24,-112){7}
\put(53,-112){8}
\end{picture}
\begin{picture}(0,0)(3,75)
\put(-89,-155){4}
\put(-61,-155){2}
\put(-32,-155){8}
\put(-4,-155){7}
\put(24,-155){6}
\put(53,-155){1}
\end{picture}
\begin{picture}(0,0)(-27,75)
\put(-126,-130){0}
\put(51,-137){0}
\end{picture}
\end{picture}
\vspace{235bp}
\caption{
\label{fig:merging:zeroes}
The ribbon graph representation of a Jenkins--Strebel differential with a single cylinder (top picture) versus the cylinder representation (middle picture). A Jenkins--Strebel differential with a single cylinder obtained from the initial one by contracting saddle connections $X_3$ and $X_5$ (bottom picture).
}
\end{figure}

Consider now another representation of the same Jenkins--Strebel differential, obtained by cutting the surface along a \emph{regular} horizontal leaf passing along the ``equator'' of the cylinder. We get an oriented \emph{flat ribbon graph} (see \cite{Kontsevich:Airy:Function} for details) with two boundary components. Its skeleton is realized by an oriented graph of horizontal saddle connections of our Jenkins--Strebel differential.

Following a boundary component in the positive direction of the horizontal foliation we can trace the cyclic order in which we follow the saddle connections. For example, for the concrete ribbon graph in the top picture in Figure \ref{fig:merging:zeroes}, we get
\begin{equation}
\label{eq:H1111:cyclic}
\begin{picture}(0,0)
\put(-3,10){\vector(1,0){0}}
\put(-5,15){\oval(10,10)[bl]}
\put(70,15){\oval(160,10)[t]}
\put(145,15){\oval(10,10)[br]}
\put(-3,-4){\vector(1,0){0}}
\put(-5,-9){\oval(10,10)[tl]}
\put(70,-9){\oval(160,10)[b]}
\put(145,-9){\oval(10,10)[tr]}
\end{picture}
\begin{matrix}
1\to 2\to 3\to 4\to 5\to 6\to 7\to 8\\
4\to 3\to 2\to 5\to 8\to 7\to 6\to 1
\end{matrix}
\vspace{8pt}
\end{equation}
\noindent
corresponding to the upper and lower boundary components $\partial C^+$ and $\partial C^-$, respectively. It is easy to pass from a ribbon graph representation to a cylinder representation and vice versa.

Note that the lengths of the saddle connections $|X_1|, \dots, |X_8|$ are independent variables, and we may deform them arbitrarily. We can even shrink one of the intervals $|X_1|, \dots, |X_8|$ completely
and we still get a legitimate flat surface represented by a new Jenkins--Strebel differential with a single cylinder.

It is obvious from the ribbon graph representation (see the top picture in Figure \ref{fig:merging:zeroes}) that in our example  the Jenkins--Strebel differential belongs to the stratum $\cH(1,1,1,1)$. Let us denote the zeroes (vertices of the skeleton graph) as indicated in Figure \ref{fig:merging:zeroes}. Consider the following embedded chain of saddle connections joining these four zeroes:
$$
\begin{CD}
\quad@>X_3>> \quad @>X_5>> \quad @>X_7>> \quad
\end{CD}
\begin{picture}(0,0)(150,0)

\thicklines
\put(0,0){\circle*{7}}
\put(46,0){\circle{6}}
\put(93,-3){\framebox(5,5){\quad}}
\put(140,-4){$\blacksquare$}
\end{picture}.
$$

Shrinking the saddle connection $X_3$ we merge two simple zeroes and get a Jenkins--Strebel differential in the stratum $\cH(2,1,1)$.
Shrinking the saddle connections $X_3$ and $X_7$ we merge two pairs of simple zeroes and get a Jenkins--Strebel differential in the stratum $\cH(2,2)$. Shrinking the saddle connections $X_3$ and $X_5$ we merge three simple zeroes and get a Jenkins--Strebel differential in the stratum $\cH(3,1)$. Shrinking all three saddle connections
$X_3, X_5, X_7$ we get a Jenkins--Strebel differential in $\cH(4)$.

In terms of a cylinder representation these operations are  simple: we just erase corresponding symbols in each of the two lines in \eqref{eq:H1111:cyclic}. For example, the cylinder representation of the Jenkins--Strebel differential in $\cH(3,1)$ obtained by shrinking the saddle connections $X_3$ and $X_5$ is presented in the bottom picture of Figure \ref{fig:merging:zeroes}; it is encoded as
$$
\begin{picture}(0,0)
\put(-3,10){\vector(1,0){0}}
\put(-5,15){\oval(10,10)[bl]}
\put(50,15){\oval(120,10)[t]}
\put(105,15){\oval(10,10)[br]}
\put(-3,-4){\vector(1,0){0}}
\put(-5,-9){\oval(10,10)[tl]}
\put(50,-9){\oval(120,10)[b]}
\put(105,-9){\oval(10,10)[tr]}
\end{picture}
\begin{matrix}
1\to 2\to 4\to 6\to 7\to 8\\
4\to 2\to 8\to 7\to 6\to 1
\end{matrix}\quad.
\vspace{8pt}
$$

Cutting the cylinder of the resulting surface by a segment $X_0$ we obtain a suspension over an interval-exchange transformation with permutation
$$
\pi=\begin{pmatrix}
0&1&2&4&6&7&8&\\
&4&2&8&7&6&1&0
\end{pmatrix},
$$
or, after alignment and reenumeration,
$$
\pi=\begin{pmatrix}
1&2&3&4&5&6&7\\
4&3&7&6&5&2&1
\end{pmatrix}.
$$
Thus, the latter permutation represents the stratum $\cH(3,1)$.

\begin{NNRemark}
In general, shrinking a saddle connection joining a pair of distinct zeroes may result in a degenerate surface: our saddle connection might have homologous ones (see \cite{Eskin:Masur:Zorich} and \cite{MZ} for details). Moreover, shrinking a saddle connection joining a zero to a simple pole for surfaces in the component $\cQ^{\mathit{irr}}(9,-1)$ \emph{always} yields a degenerate surface (see \cite{L}). Our situation is  special: horizontal saddle connections of an Abelian Jenkins--Strebel differential with a single cylinder are never homologous; for quadratic Jenkins--Strebel differentials it is  easy to identify the homologous ones (see \cite{MZ} for more details).
\end{NNRemark}

\section{Representatives of strata of Abelian differentials}
\label{s:Abelian:differentials}

We will separately consider Abelian and quadratic differentials. In each
case we start by recalling a classification of connected components of the strata. Then for each connected component we construct a cylindrical generalized permutation of the form \eqref{eq:cylindric:permutation} representing the chosen connected component.

\subsection{Classification of connected components:
strata of Abelian differentials}

Connected components of strata of Abelian differentials are classified by the following two parameters, see \cite{KZ03}.

For any $g\ge 2$
two special strata, namely $\cH(2g-2)$ and $\cH(g-1,g-1)$ contain \emph{hyperelliptic} connected components $\cH^{\mathit{hyp}}(2g-2)$ and $\cH^{\mathit{hyp}}(g-1,g-1)$. A flat surface in the stratum $\cH(2g-2)$ belongs to $\cH^{\mathit{hyp}}(2g-2)$ if and only if the underlying Riemann surface is hyperelliptic. A flat surface in the stratum $\cH(g-1,g-1)$ belongs to the component $\cH^{\mathit{hyp}}(g-1,g-1)$ if and only if the underlying Riemann surface is hyperelliptic and the hyperelliptic involution interchanges the zeroes. In particular, according to this definition the locus of hyperelliptic flat surfaces in $\cH(g-1,g-1)$ for which the hyperelliptic involution fixes the zeroes is located in one of the \text{nonhyperelliptic} connected components.

When all degrees of zeroes of an Abelian differential are even, \ie when the flat surface belongs to $\cH(2d_1, \dots, 2d_\noz)$, one can associate to the flat surface a \emph{parity of the spin-structure} which takes value zero or one depending only on the embodying connected component (see Appendix \ref{a:spin:structure}).

For flat surfaces of genus four and higher, these two invariants take all possible values and classify the connected components. In small genera some values of these invariants are not realizable, so there
are fewer connected components than in general.

\begin{NNTheorem}[M. Kontsevich and A. Zorich]

All connected components of any stratum  of Abelian differentials
on a complex curve of genus $g\ge 4$ are described by the following list:

The stratum $\mathcal{H}(2g-2)$ has  three  connected components:
the hyperelliptic one, $\mathcal{H}^{\mathit{hyp}}(2g-2)$,  and  two other
components:            $\mathcal{H}^{\mathit{even}}(2g-2)$             and
$\mathcal{H}^{\mathit{odd}}(2g-2)$  corresponding  to even  and  odd  spin
structures.

The stratum  $\mathcal{H}(2l,2l)$,  $l\ge  2$ has three connected
components:  the  hyperelliptic one,  $\mathcal{H}^{\mathit{hyp}}(2l,2l)$,
and   two   other  components:   $\mathcal{H}^{\mathit{even}}(2l,2l)$  and
$\mathcal{H}^{\mathit{odd}}(2l,2l)$.

All the  other strata of the form $\mathcal{H}(2l_1,\dots,2l_n)$,
where  all   $l_i\ge   1$,   have   two   connected   components:
$\mathcal{H}^{\mathit{even}}(2l_1,\dots,2l_n)$                         and
$\mathcal{H}^{\mathit{odd}}(2l_1,\dots,2l_n)$, corresponding to  even  and
odd spin structures.

The strata $\mathcal{H}(2l-1,2l-1)$, $l\ge 2$, have two connected
components;  one   of  them,  $\mathcal{H}^{\mathit{hyp}}(2l-1,2l-1)$,   is
hyperelliptic;  the  other one, $\mathcal{H}^{\mathit{nonhyp}}(2l-1,2l-1)$,  is not.

All  other strata of Abelian  differentials on complex  curves of
genera $g\ge 4$ are nonempty and connected.
\end{NNTheorem}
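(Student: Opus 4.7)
The plan is to exhibit two locally constant invariants on each stratum---hyperellipticity and, where applicable, the parity of the spin structure---and then to show that (i) they separate the components listed, and (ii) within a fixed value of these invariants the stratum is connected. Hyperellipticity is the existence of a holomorphic involution $\sigma$ of the underlying Riemann surface with $\sigma^*\omega=-\omega$; on the two strata $\cH(2g-2)$ and $\cH(g-1,g-1)$ the locus of such $(X,\omega)$ is open because the hyperelliptic cover produces Abelian differentials of exactly these types, and a dimension count in period coordinates matches the ambient stratum. The spin parity, defined on any stratum $\cH(2l_1,\dots,2l_\noz)$, is the Arf invariant of the Atiyah--Mumford quadratic form $q\colon H_1(S,\ZZ)\to\ZZ$ attached to the half-canonical divisor $\sum l_i P_i$; I would compute $q$ via Johnson's formula by summing $\ind$-values of a symplectic basis of simple closed curves along the horizontal foliation of $\omega$, which makes local constancy manifest.

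Once these invariants are available, the remaining task is to show that two flat surfaces sharing all of them lie in the same connected component. I would argue by induction on the number $\noz$ of zeroes and on the genus $g$, taking as base case the principal stratum $\cH(1,\dots,1)$ which is known to be connected (one can see this either from the Jenkins--Strebel representatives constructed later in the paper or from a direct Rauzy-class calculation). The inductive step uses two elementary surgeries. The first is \emph{merging a pair of zeroes} by shrinking a short saddle connection joining them: this realizes a component of $\cH(\dots,d'+d'',\dots)$ as a boundary piece of a component of $\cH(\dots,d',d'',\dots)$, and a local ribbon-graph computation shows that the spin parity is preserved. The second is \emph{bubbling a handle} at a regular point: this raises the genus by one while keeping the zero divisor intact, and by varying the gluing parameter one can choose whether to preserve or to flip the Arf invariant. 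The hyperelliptic locus is handled separately: the involution together with the symmetric polygonal model of the hyperelliptic component yields a distinguished representative in each of $\cH^{\mathit{hyp}}(2g-2)$ and $\cH^{\mathit{hyp}}(g-1,g-1)$ to which any hyperelliptic flat surface deforms through hyperelliptic surfaces.

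The main obstacle is completeness of the spin invariant, that is, showing that any two nonhyperelliptic flat surfaces with the same spin parity actually lie in a single connected component. This requires careful bookkeeping of how the Arf invariant changes under the two surgeries above and, in particular, a verification that both values of the parity are genuinely reached by the handle-bubbling move, so that the graph of surgery moves on representatives is connected within each spin class. Small-genus accidents---why $\cH(2l,2l)$ has three components rather than four, and why $\cH(2g-2)$ has fewer components when $g=2,3$---come from coincidences in which the hyperelliptic locus happens to coincide with one of the spin components; the assumption $g\ge 4$ is precisely what rules out such coincidences and leaves the clean picture stated in the theorem. The explicit Jenkins--Strebel representatives constructed in the remainder of the paper pin down a canonical point in each component, providing a concrete way to verify the classification by direct inspection.
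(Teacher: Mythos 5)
Note first that the paper you are commenting on does not prove this theorem at all: it is quoted as background, with the proof attributed to \cite{KZ03}, and the paper's own contribution is to construct explicit representatives (cylindrical permutations, Jenkins--Strebel differentials with one cylinder) in each of the components that the theorem lists. So there is no internal proof to compare with; your proposal has to stand on its own as a sketch of the argument of \cite{KZ03}, and as such it has a genuine gap. The easy half --- that hyperellipticity and the parity of the spin structure are locally constant, hence invariants of connected components --- is set up correctly (dimension count for the hyperelliptic loci, Johnson's formula and the Arf invariant for the spin parity). But the entire content of the theorem is the converse: that these invariants are \emph{complete}, i.e., that each invariant class is a single connected component, and that $\cH(2l-1,2l-1)$, which carries no spin invariant at all, has exactly one nonhyperelliptic component. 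You flag this yourself as ``the main obstacle'' and then only say it ``requires careful bookkeeping''; nothing in the proposal actually carries it out.

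Moreover, the induction scheme as you set it up would not close even with more bookkeeping. Your base case is the principal stratum $\cH(1,\dots,1)$, and your two surgeries move from finer strata to coarser ones (merging zeroes) and from lower genus to higher (bubbling a handle). Connectivity of $\cH(1,\dots,1)$ gives no upper bound on the number of components of a coarser stratum $\cH(d_1,\dots,d_\noz)$: indeed the coarser strata \emph{are} disconnected while the principal stratum is connected, so no argument that merely transports connectivity down through zero-merging can succeed. The argument of \cite{KZ03} is anchored the other way around: one shows that every connected component of every stratum is adjacent to the minimal stratum $\cH(2g-2)$, and uses the local fact that the space of ways of breaking a zero of degree $2g-2$ into zeroes of prescribed degrees $d_1,\dots,d_\noz$ is connected, to transfer the component count from $\cH(2g-2)$ to all strata (with the hyperelliptic loci treated separately precisely because this local statement fails to separate them). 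The minimal strata are then handled by induction on genus via handle-bubbling, whose effect on the Arf invariant must be computed, and the induction bottoms out not at the principal stratum but at explicit low-genus Rauzy-class computations (the $\cH(4)$ and $\cH(6)$ calculations of Veech and Arnoux mentioned in the paper). Until you replace your base case and the direction of your induction by this adjacency-to-the-minimal-stratum mechanism, and supply an argument for the odd strata $\cH(2l-1,2l-1)$, the proposal is an outline of the known strategy rather than a proof.
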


The theorem below shows that in genera $g=2,3$ some components
are missing with respect to the general case. Connected components
in small genera were classified by W. A. Veech and by P. Arnoux using \emph{Rauzy classes} (see Appendix \ref{a:generalized:rauzy:operations}); the corresponding invariants (hyperellipticity and parity of the spin structure) were evaluated by M. Kontsevich and the author. In full generality the theorem below is proved in \cite{KZ03}.

\begin{NNTheorem}

The moduli space  of Abelian differentials  on a complex curve  of  genus $g=2$    contains    two     strata:    $\mathcal{H}(1,1)$    and $\mathcal{H}(2)$.  Each  of them is connected and coincides  with its hyperelliptic component.

Each of  the  strata  $\mathcal{H}(2,2)$, $\mathcal{H}(4)$ of the
moduli space  of Abelian differentials  on a complex curve of genus $g=3$ has two  connected  components:  the  hyperelliptic  one, and one having odd spin structure. The other  strata  are  connected  for genus $g=3$.
\end{NNTheorem}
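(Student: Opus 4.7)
The plan is to combine two ingredients: the topological invariants that persist on connected components (hyperellipticity and parity of the spin structure), and an enumeration argument (essentially the Rauzy-class enumeration used by Veech and Arnoux) to verify that these invariants suffice to distinguish components in small genera. I would first establish that hyperellipticity, as well as the parity of the spin structure on strata of the form $\cH(2l_1,\dots,2l_\noz)$, is locally constant on the stratum and hence well-defined on connected components: the hyperelliptic involution depends continuously on the underlying Riemann surface, and the spin parity is the classical Atiyah--Johnson invariant recalled in the spin-structure appendix.

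For $g=2$ I exploit the fact that every Riemann surface is hyperelliptic. A nonzero holomorphic 1-form $\omega$ in genus two satisfies $\iota^{*}\omega=-\omega$, where $\iota$ is the unique hyperelliptic involution, so the divisor of $\omega$ is $\iota$-invariant. In the stratum $\cH(2)$ the unique double zero must therefore be $\iota$-fixed, hence a Weierstrass point; $\cH(2)$ is then the total space of a $\C{}$-line bundle over the irreducible moduli space of pairs (genus-two curve, Weierstrass point) with the zero section removed, so it is connected and coincides with $\cH^{\mathit{hyp}}(2)$. In the stratum $\cH(1,1)$ the two simple zeroes form an $\iota$-orbit of size two and the same bundle argument gives $\cH(1,1)=\cH^{\mathit{hyp}}(1,1)$.

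For $g=3$ and the strata $\cH(4)$ and $\cH(2,2)$, spin parity is defined. The key step is to compute, via the Arf-type formula recalled in the spin-structure appendix, that both $\cH^{\mathit{hyp}}(4)$ and $\cH^{\mathit{hyp}}(2,2)$ carry an \emph{even} spin structure. This gives the a priori inclusion $\cH^{\mathit{hyp}}\subseteq\cH^{\mathit{even}}$. The existence of a component with odd spin parity is then verified by exhibiting a single explicit example in each stratum (for instance built from a Jenkins--Strebel representative as in the constructions of the subsequent sections of this paper). The main obstacle, and the point which really distinguishes small genera from $g\ge 4$, is to establish the reverse inclusion $\cH^{\mathit{even}}\subseteq\cH^{\mathit{hyp}}$ in genus three, that is, to rule out a separate nonhyperelliptic even-spin component. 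This step I would handle by a direct enumeration of the extended Rauzy classes on $\noi=2g+\noz-1$ symbols following Rauzy, Veech, and Arnoux; by Veech's theorem those classes are in bijection with connected components, and the count in these low values of $\noi$ turns out to be exactly two, matching the two invariant values.

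For the remaining strata $\cH(3,1)$, $\cH(2,1,1)$ and $\cH(1,1,1,1)$ in genus three there is nothing on the invariant side: neither hyperellipticity (defined only in $\cH(2g-2)$ and $\cH(g-1,g-1)$) nor spin parity (defined only when all degrees are even) separates any points. Connectedness is therefore proved by the same Rauzy enumeration, checking in each case that exactly one extended Rauzy class occurs. The genuinely hard step throughout the argument is this combinatorial enumeration; although algorithmic, it requires either a careful computer-assisted search or the clever combinatorial bookkeeping developed by Veech and Arnoux, and it is the only place where the special small value of the genus is really used.
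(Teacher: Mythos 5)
Your proposal is correct, and for the crux of the statement it follows the same route that the paper itself relies on: the paper does not reprove this theorem but attributes it to the Rauzy-class enumerations of Veech and Arnoux together with the evaluation of hyperellipticity and spin parity, and the tables in Appendix \ref{a:tables} record exactly that enumeration (a single extended Rauzy class for each of $\cH(2)$, $\cH(1,1)$, $\cH(3,1)$, $\cH(2,1,1)$, $\cH(1,1,1,1)$, and exactly two for each of $\cH(4)$ and $\cH(2,2)$, distinguished as hyperelliptic versus odd spin). The one place where you genuinely diverge is genus two: instead of enumeration you argue geometrically that every genus-two curve is hyperelliptic, that $\iota^{*}\omega=-\omega$ forces the zero divisor to be $\iota$-invariant (a Weierstrass point for $\cH(2)$, a swapped pair for $\cH(1,1)$), and that connectivity follows from irreducibility of $\cM_2$ and monodromy on Weierstrass points; this is sound (you should note explicitly that in $\cH(1,1)$ the two zeroes cannot both be fixed, since $P_1+P_2\sim K\sim 2P_1$ would force $P_1\sim P_2$), and it buys an enumeration-free explanation of why the genus-two strata are automatically hyperelliptic, whereas the paper's enumeration is less illuminating but uniform and also yields the explicit representatives and cardinalities used elsewhere. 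Your remaining ingredients match the paper's: the hyperelliptic components in genus three have even parity (the formulas $\phi=[(g+1)/2]$ and $\phi=k+1$ in Appendix \ref{a:spin:structure} give $0$ for $\cH^{\mathit{hyp}}(4)$ and $\cH^{\mathit{hyp}}(2,2)$), odd-spin representatives come from the one-cylinder constructions of Section \ref{s:Abelian:differentials}, and both the exclusion of a nonhyperelliptic even component and the connectedness of $\cH(3,1)$, $\cH(2,1,1)$, $\cH(1,1,1,1)$ rest, exactly as in the paper, on the finite check of extended Rauzy classes (built with the operations $a$, $b$, $c$ and discarding permutations producing fake singularities).
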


\subsection{Representatives of connected strata}

We use the following natural convention in the statements of Propositions \ref{pr:abelian:general} -- \ref{pr:qd:g:ge3}. Consider an ordered set $\{j_1, j_2, \dots\}$. For $n=1$ the subcollection
$$
\underbrace{j_1,\dots,j_{n-1}}_{n-1}
$$
is defined to be empty.

Let $d_1,\dots,d_\noz$ be an arbitrary collection of strictly positive integers satisfying the relation $d_1+\dots+d_\noz=2g-2$.

\begin{Proposition} A permutation obtained by erasing symbols
\label{pr:abelian:general}
\begin{multline*}
\underbrace{3,5,\dots,2d_1-1}_{d_1-1},\quad \underbrace{2d_1+3,\dots,2(d_1+d_2)-1}_{d_2-1},\quad  \underbrace{\dots \dots}_{\dots}\ ,
\\
\underbrace{\dots \dots}_{\dots}\ ,\quad
\underbrace{2(d_1+\dots+d_{\noz-1})+3,\dots,2(d_1+\dots+d_\noz)-1}_{d_\noz-1}
\end{multline*}
in the permutation
\begin{equation*}
\begin{pmatrix}
0&1&2&3&4&&5&6&7&8&\dots&4g-7&4g-6&4g-5&4g-4\\
& &4&3&2&&5&8&7&6&\dots&4g-7&4g-4&4g-5&4g-6&1&0
\end{pmatrix},
\begin{picture}(0,0)(228,-10)
\put(-71,0){\circle{10}}
\put(-71,-13){\circle{10}}
\put(-30,0){\circle{10}}
\put(-30,-13){\circle{10}}
\put(0,0){\circle{10}}
\put(0,-13){\circle{10}}
\put(64,-1){\oval(30,12)}
\put(64,-14){\oval(30,12)}
\put(138,-1){\oval(30,12)}
\put(138,-14){\oval(30,12)}
\end{picture}
\end{equation*}
represents the stratum $\cH(d_1,\dots,d_\noz)$.
\end{Proposition}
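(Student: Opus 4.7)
The plan is to verify the claim in two stages: first, that the unerased base permutation represents $\cH(1^{2g-2})$; then, that the prescribed erasures progressively merge simple zeros into zeros of the desired degrees.

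\emph{Step 1.} For any cylindrical permutation encoding a single-cylinder Jenkins--Strebel Abelian differential, the ribbon-graph vertices correspond to cycles of the permutation $\rho := \tau^{-1}\sigma$, where $\tau$ and $\sigma$ are the cyclic successors of the horizontal saddle connections along the top and bottom boundaries of the cylinder; the cone angle at a vertex is $2\pi$ times the length of its cycle. For the base permutation $\tau(i)=i+1\bmod(4g-4)$, and the bottom row gives the cyclic order (starting from $1$) as $1,4,3,2,5,8,7,6,9,12,11,10,\ldots$, namely $\sigma\colon 4k+1\mapsto4k+4\mapsto4k+3\mapsto4k+2$ within each block $\{4k+1,\ldots,4k+4\}$ and then jumping to $4(k+1)+1$. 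A direct computation yields
$$
\rho \;=\; \prod_{k=0}^{g-2}(4k+1,4k+3)(4k+2,4k+4),
$$
a product of $2g-2$ disjoint transpositions, so every vertex has cone angle $4\pi$ and the base permutation represents $\cH(1^{2g-2})$.

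\emph{Step 2.} As explained in Subsection~\ref{ss:idea:of:construction}, erasing a symbol $j$ from both lines of the cylindrical permutation contracts the horizontal saddle connection $X_j$; for Abelian single-cylinder Jenkins--Strebel differentials no two such saddle connections are homologous, so each contraction stays in the ambient stratum and, when $X_j$ joins distinct vertices, merges them into one with cone angle equal to the sum minus $2\pi$. For every odd $j=2k+1\ge3$, the endpoints $v_{2k}$ and $v_{2k+1}$ of $X_{2k+1}$ lie respectively in an even-indexed $\rho$-cycle $(4l+2,4l+4)$ and an odd-indexed $\rho$-cycle $(4l'+1,4l'+3)$, hence are always distinct. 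Within the first block, $X_3$ merges $(1,3)$ with $(2,4)$; $X_5$ then adjoins $(5,7)$ through the already-merged $v_4$; $X_7$ adjoins $(6,8)$ through $v_7$; and so on, each successive edge contributing exactly one new $\rho$-cycle to the growing cluster. After the $d_1-1$ contractions $X_3,X_5,\ldots,X_{2d_1-1}$, exactly $d_1$ cycles are merged into one vertex of cone angle $4\pi d_1-2\pi(d_1-1)=2\pi(d_1+1)$, i.e., a zero of degree $d_1$. The same mechanism makes the $k$th block merge $d_k$ consecutive fresh $\rho$-cycles into a zero of degree $d_k$; different blocks touch disjoint ranges of cycle indices by inspection, and $\sum d_k=2g-2$ accounts for all original simple zeros, so the resulting Jenkins--Strebel differential lies in $\cH(d_1,\ldots,d_\noz)$. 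Cylindricity is preserved since the symbol $0$ is never erased.

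The hardest part is the combinatorial bookkeeping: pinning down the explicit cycle structure of $\rho$, and then verifying that the successive contractions within each block adjoin exactly one new $\rho$-cycle per edge with no overlap between blocks---once these are established, each assertion follows directly from the framework of Subsection~\ref{ss:idea:of:construction}.
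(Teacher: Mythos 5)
Your proposal is correct and follows essentially the same route as the paper: start from the base permutation as the cylinder representation of a single-cylinder Jenkins--Strebel differential in the principal stratum $\cH(1,\dots,1)$, then contract the chain saddle connections $X_3,X_5,\dots$ in the prescribed groups to merge simple zeroes into zeroes of degrees $d_1,\dots,d_\noz$. The only difference is presentational: where the paper reads the singularity structure and the embedded chain directly off the explicit geometric picture (Figure \ref{fig:Jenkins:Strebel:H1111}), you verify the same facts combinatorially via the cycle structure of $\tau^{-1}\sigma$, which is a legitimate substitute.
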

Note that for $g=2$ the above permutation should be read as
$$
\begin{pmatrix}
0&1&2&3&4&\\
& &4&3&2&1&0
\end{pmatrix}
$$
\begin{proof}
The proof is completely analogous to the one presented in Section \ref{ss:idea:of:construction} for the case $g=3$. Namely, consider a Jenkins--Strebel differential with a single cylinder as in Figure \ref{fig:Jenkins:Strebel:H1111}.
\begin{figure}[htb]
\includegraphics{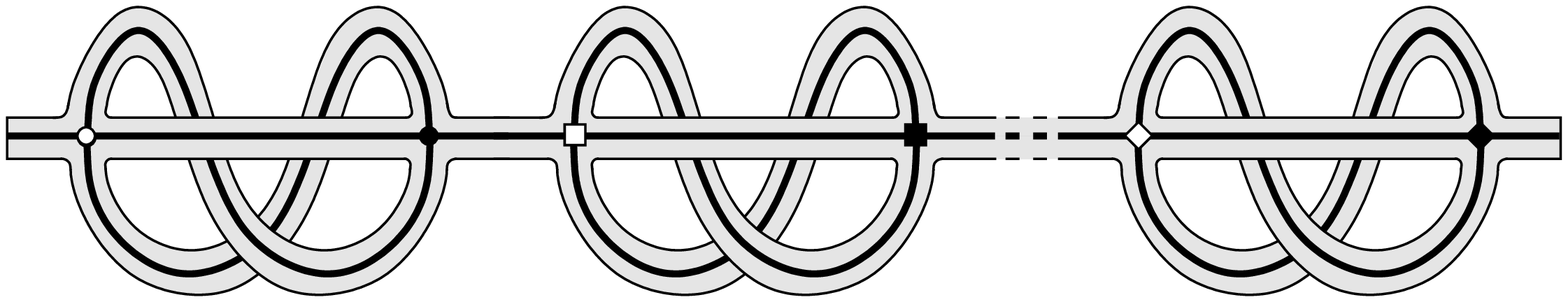}
\includegraphics{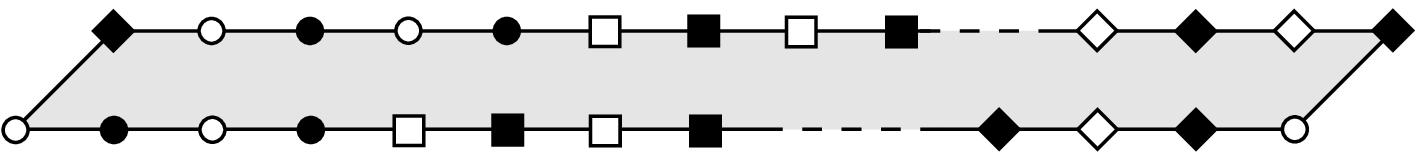}
\begin{picture}(0,0)(0,5)
\begin{picture}(0,0)(72,-120)
\put(-88,-140){\scriptsize $1$}
\put(-65,-118){\scriptsize $2$}
\put(-40,-140){\scriptsize $3$}
\put(-17,-118){\scriptsize $4$}
\put(9,-140){\scriptsize $5$}
\put(35,-118){\scriptsize $6$}
\put(61,-140){\scriptsize $7$}
\put(84,-118){\scriptsize $8$}
\put(144,-118){\tiny $4g-6$}
\put(171,-140){\tiny $4g\!-\!5$}
\put(194,-118){\tiny $4g-4$}
\put(225,-140){\scriptsize $1$}
\end{picture}

\begin{picture}(0,0)(34,-20)
\put(-94,-116){\scriptsize $1$}
\put(-70,-116){\scriptsize $2$}
\put(-48,-116){\scriptsize $3$}
\put(-46,-113){\circle{10}}
\put(-26,-116){\scriptsize $4$}
\put(-4,-116){\scriptsize $5$}
\put(20,-116){\scriptsize $6$}
\put(-2,-113){\circle{10}}
\put(43,-116){\scriptsize $7$}
\put(64,-116){\scriptsize $8$}
\put(45,-113){\circle{10}}
\put(94,-116){\scriptsize $...$}
\put(127,-116){\tiny $4g\!-\!6$}
\put(150,-116){\tiny $4g\!-\!5$}
\put(158,-114){\oval(20,9)}
\put(173,-116){\tiny $4g\!-\!4$}
\end{picture}
\begin{picture}(0,0)(60,-21)
\put(-96,-155){\scriptsize $4$}
\put(-73,-155){\scriptsize $3$}
\put(-71,-152){\circle{10}}
\put(-51,-155){\scriptsize $2$}
\put(-27,-155){\scriptsize $5$}
\put(-25,-152){\circle{10}}
\put(-5,-155){\scriptsize $8$}
\put(19,-152){\circle{10}}
\put(17,-155){\scriptsize $7$}
\put(41,-155){\scriptsize $6$}
\put(64,-152){\circle{10}}
\put(62,-155){\scriptsize $9$}
\put(77,-155){\scriptsize $...$}
\put(134,-153){\oval(20,9)}
\put(102,-155){\tiny $4g\!-\!4$}
\put(126,-155){\tiny $4g\!-\!5$}
\put(148,-155){\tiny $4g\!-\!6$}
\put(178,-155){\scriptsize $1$}
\end{picture}
\begin{picture}(0,0)(38,-20)
\put(-122,-132){\scriptsize $0$}
\put(178,-138){\scriptsize $0$}
\end{picture}
\end{picture}
\vspace{155bp} \caption{
\label{fig:Jenkins:Strebel:H1111}
A Jenkins--Strebel differential with a single cylinder in the stratum
$\cH(1,\dots,1)$ and its cylinder representation. The distinguished symbols
$3,5\dots,4g-5$ enumerate the saddle connections chosen for contraction.
}
\end{figure}
By construction, it belongs to the principal stratum $\cH(1,\dots,1)$ of Abelian differentials in genus $g$. The cylinder representation of this Jenkins--Strebel differential provides us with the above permutation representing the extended Rauzy class of the principal stratum. Note that an embedded chain of\/ $2g-1$ saddle connections
$$
\begin{CD}
P_1@>X_3>> P_2 @>X_5>> \dots @>X_{4g-5}>>P_{2g-2} \quad
\end{CD}
$$
joins all $2g-2$ zeroes of the corresponding Abelian differential.
Hence by contracting the saddle connections indexed by symbols from the groups indicated below,
\begin{setlength}{\multlinegap}{0pt}
\begin{multline*}
\Big\{\
\underbrace{3,5,\dots,2d_1-1}_{d_1-1},\quad 2d_1+1,\quad \underbrace{2d_1+3,\dots,2(d_1+d_2)-1}_{d_2-1},\quad 2(d_1+d_2)+1,\ \underbrace{\dots \dots}_{\dots}\ ,
\\
\underbrace{\dots \dots}_{\dots}\ ,\
2(d_1+\dots+d_{\noz-1})+3,\quad\underbrace{2(d_1+\dots+d_{\noz-1})+3,\dots,2(d_1+\dots+d_\noz)-1}_{d_\noz-1}
\ \Big\},
\end{multline*}
\end{setlength}
we merge zeroes $P_1, \dots P_{d_1}$ into a single zero of degree $d_1$, zeroes $P_{d_1+1}, \dots P_{d_1+d_2}$ into a single zero of degree $d_2$,..., zeroes $P_{d_1+\dots+d_{\noz-1}+1}, \dots, P_{d_1+\dots+d_\noz}$ into a single zero of degree $d_\noz$. A cylinder representation of the resulting Jenkins--Strebel differential provides us with a permutation obtained from the initial one by erasing the symbols enumerating the contracted saddle connections.
\end{proof}

\subsection{Representatives of components $\cH^{\mathit{odd}}(2d_1, \dots, 2d_\noz)$}
\label{ss:H2222:odd}

Let $d_1,\dots,d_\noz$ be an arbitrary collection of strictly positive integers such that $d_1+\dots+d_\noz=g-1$, where $g\ge 3$.

\begin{Proposition} A permutation obtained by erasing symbols
\label{pr:Jenkins:Strebel:H2222:odd}
\begin{multline*}
\underbrace{4,7,\dots,3d_1-2}_{d_1-1},\quad \underbrace{3d_1+4,\dots,3(d_1+d_2)-2}_{d_2-1},\quad  \underbrace{\dots \dots}_{\dots}\ ,
\\
\underbrace{\dots \dots}_{\dots}\ ,\quad
\underbrace{3(d_1+\dots+d_{\noz-1})+4,\dots,3(d_1+\dots+d_\noz)-2}_{d_\noz-1}
\end{multline*}
in the permutation
\begin{equation}
\label{eq:permutation:H222:odd}
\begin{pmatrix}
0\!&1&2&3&\!&4&5&6&\!&7&8&9&\dots&3g-5&3g-4&3g-3&\\
& &3&2&\!&4&6&5&\!&7&9&8&\dots&3g-5&3g-3&3g-4&1\!&0
\end{pmatrix},
\begin{picture}(0,0)(228,-10)
\put(-32,0){\circle{10}}
\put(-32,-13){\circle{10}}
\put(23,0){\circle{10}}
\put(23,-13){\circle{10}}
\put(101,-1){\oval(32,12)}
\put(101,-14){\oval(32,12)}
\end{picture}
\end{equation}
represents the component $\cH^{\mathit{odd}}(2d_1,\dots,2d_\noz)$.
\end{Proposition}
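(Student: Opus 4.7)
The plan is to mirror the proof of Proposition \ref{pr:abelian:general}. The first task is to show that the full permutation \eqref{eq:permutation:H222:odd}, with no symbols erased (that is, when $\noz = g-1$ and $d_1=\dots=d_{g-1}=1$), represents the component $\cH^{\mathit{odd}}(2,2,\dots,2)$ in genus $g$. Once this base case is settled, the saddle connections indexed by $\{4,7,\dots,3g-5\}$ form an embedded chain $P_1\to P_2\to\dots\to P_{g-1}$ through the $g-1$ zeros of degree two, and contracting exactly the subcollection specified in the statement merges $d_1$ consecutive zeros into one zero of degree $2d_1$, then $d_2$ consecutive zeros into one of degree $2d_2$, and so on, in complete analogy with Section \ref{ss:idea:of:construction}. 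Since each contracted saddle connection joins two distinct zeros, the parity of the spin structure is locally constant under the degeneration, so the resulting Jenkins--Strebel differential lies in $\cH^{\mathit{odd}}(2d_1,\dots,2d_\noz)$.

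To verify that the base permutation represents the stratum $\cH(2,\dots,2)$ with $g-1$ zeros of degree two, I reconstruct the ribbon graph from the two cyclically ordered boundary components encoded in \eqref{eq:permutation:H222:odd}. A direct trace of corners shows that each ``twisted block'' $(3k+1,3k+2,3k+3)\mapsto(3k+1,3k+3,3k+2)$ contributes a single vertex of cone angle $6\pi$, producing exactly $g-1$ zeros of degree two. The Euler characteristic relation $V-E=(g-1)-(3g-3)=2-2g$ confirms the underlying surface has genus $g$.

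The principal new ingredient, and the main obstacle, is the computation of the spin structure parity. I will evaluate it via the Arf invariant of the quadratic form on $H_1(S;\ZZ)$ induced by the Abelian differential (see Appendix \ref{a:spin:structure}), using a symplectic basis of cycles drawn on the parallelogram representation of the Jenkins--Strebel differential, where the value of the quadratic form on each basis cycle is read off from its horizontal winding number. The specific ``twisted'' arrangement of blocks of three in \eqref{eq:permutation:H222:odd} is designed precisely so that this Arf invariant evaluates to $1$, placing the surface in the odd component. In the low-dimensional cases where the ambient stratum (or one of its contractions, namely $\cH(2g-2)$ for $\noz = 1$ and $\cH(2l,2l)$ for $\noz = 2$ with $d_1=d_2=l$) admits a hyperelliptic component, I will additionally rule out hyperellipticity either by comparing our computed odd parity with the known parity $\lfloor (g+1)/2\rfloor\bmod 2$ of the corresponding $\cH^{\mathit{hyp}}$, or by noting the absence of any combinatorial symmetry in \eqref{eq:permutation:H222:odd} compatible with a hyperelliptic involution fixing or exchanging the zeros.

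Finally, the contraction step is combinatorially identical to the one in the proof of Proposition \ref{pr:abelian:general}. The erased symbols $\{4,7,\dots,3d_1-2\}$, $\{3d_1+4,\dots,3(d_1+d_2)-2\}$, etc., are precisely the internal links of the sub-chains joining consecutive groups of $d_i$ zeros in the chain $P_1\to P_2\to\dots\to P_{g-1}$, so their contraction merges each group into a single zero of degree $2d_i$ without ever connecting zeros across group boundaries. As noted in the Remark closing Section \ref{ss:idea:of:construction}, horizontal saddle connections of an Abelian Jenkins--Strebel differential with a single cylinder are never homologous, so each contraction is a genuine nondegenerate operation producing a Jenkins--Strebel differential with a single cylinder whose cylindrical permutation is obtained from \eqref{eq:permutation:H222:odd} by erasing exactly the designated symbols. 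Combined with the spin-parity invariance under these contractions, this identifies the resulting surface as representing $\cH^{\mathit{odd}}(2d_1,\dots,2d_\noz)$.
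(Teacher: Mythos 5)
Your overall skeleton is the same as the paper's: first show that the full permutation \eqref{eq:permutation:H222:odd} gives a one-cylinder Jenkins--Strebel differential in $\cH^{\mathit{odd}}(2,\dots,2)$, then contract the saddle connections indexed by the listed symbols along the embedded chain of zeroes, using that merging zeroes does not change the parity of the spin structure. However, the decisive step --- that the base surface really has \emph{odd} parity --- is never carried out: you only assert that ``the specific twisted arrangement of blocks of three is designed precisely so that this Arf invariant evaluates to $1$.'' That assertion is the entire content of the paper's Lemma \ref{lm:Jenkins:Strebel:H2222:odd}, and without it your argument cannot distinguish \eqref{eq:permutation:H222:odd} from the companion permutation \eqref{eq:permutation:H222:even} of Proposition \ref{pr:Jenkins:Strebel:H2222:even}, which has the same block structure but represents the even component. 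The paper settles this by an explicit choice of canonical basis: cycles $\alpha_k,\beta_k$ ($k=1,\dots,g-1$) on the repetitive patterns and $\alpha_g$, all everywhere transverse to the horizontal foliation, hence of index $0$, together with $\beta_g$ following the chain $X_1\to X_4\to\dots\to X_{3g-5}$, which has $\ind(\beta_g)=g-1$; formula \eqref{eq:parity:of:spin:structure} then gives $\varphi(S)=(g-1)+g\equiv 1 \pmod 2$. Some computation of this kind (or its algebraic analogue from Appendix \ref{a:spin:structure}) must actually be performed; promising that the winding numbers will work out is a genuine gap at the heart of the proof.

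A second, smaller issue concerns ruling out the hyperelliptic component when the contracted surface lands in $\cH(2g-2)$ or $\cH(2d,2d)$. Your first proposed route --- comparing the computed odd parity with $\left[\tfrac{g+1}{2}\right]\bmod 2$ (resp. $k+1\bmod 2$) --- fails for infinitely many genera, namely exactly when the hyperelliptic component itself has odd parity (e.g. $g\equiv 1,2 \pmod 4$ for $\cH(2g-2)$), so it cannot serve as the general argument. Your second route, ``absence of a combinatorial symmetry compatible with a hyperelliptic involution,'' is the correct one, but it is only conclusive once one knows that hyperellipticity of a one-cylinder Jenkins--Strebel differential forces the reversed cyclic structure \eqref{eq:hyperelliptic:Jenkins:Strebel} on the boundary saddle connections (after perturbing the lengths to be pairwise distinct); this is precisely Proposition \ref{pr:hyperelliptic:Jenkins:Strebel}, which the paper invokes at this point. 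You should either cite that proposition explicitly and verify that the (erased) cylinder structure is not of the symmetric form, or reprove that rigidity statement; as written, the symmetry claim is left informal.
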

The proof is based on the following simple Lemma.
\begin{Lemma}
\label{lm:Jenkins:Strebel:H2222:odd}
A Jenkins--Strebel differential with a single cylinder as in Figure \ref{fig:Jenkins:Strebel:H2222:odd} belongs to the component $\cH^{\mathit{odd}}(\underbrace{2,\dots,2}_{g-1})$.
\end{Lemma}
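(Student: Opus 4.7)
The plan is to verify the claim in two steps: first identify the underlying stratum, then determine the parity of the spin structure.

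\textbf{Step 1 --- the stratum.} After removing the auxiliary diagonal $X_{0}$, the boundary of the cylinder is identified along $3g-3$ distinct saddle connections indexed by the symbols $1, 2, \dots, 3g-3$. The surface carries a CW decomposition with $V$ vertices (the cone points), $E = 3g-3$ saddle-connection edges, and one 2-cell (the cylinder, cut open by one auxiliary edge to become a disk). Euler's formula gives $V - (E+1) + 1 = 2-2g$, so $V = g-1$. The positive integer degrees $d_{1},\dots,d_{V}$ satisfy $d_{1}+\dots+d_{V} = 2g-2$ with $V = g-1$, so their average is $2$; consequently it is enough to verify that each zero has cone angle at least $6\pi$. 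This is a local check: at each vertex of the ribbon graph one reads off the cyclic order of incident half-edges from the two boundary walks of the permutation and confirms that exactly three horizontal separatrices emanate from the vertex. The uniform block structure $(3k+1,3k+2,3k+3)\mapsto(3k+1,3k+3,3k+2)$ makes this bookkeeping identical in every block.

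\textbf{Step 2 --- spin parity.} By the Arf-invariant formula recalled in Appendix \ref{a:spin:structure}, the parity of the spin structure carried by $\omega\in\cH(2,\dots,2)$ equals
\begin{equation*}
\mathrm{Arf}(\omega) \;=\; \sum_{i=1}^{g} \bigl(\ind(a_{i})+1\bigr)\bigl(\ind(b_{i})+1\bigr) \pmod{2},
\end{equation*}
for any symplectic basis $\{a_{i},b_{i}\}$ of $H_{1}(S;\ZZ)$ represented by smooth closed curves avoiding the zeroes, where $\ind(\gamma)$ denotes the degree of the Gauss map of $\gamma$ with respect to the horizontal direction. I would construct such a basis adapted to the single-cylinder decomposition: let $a_{1}$ be the waist of the cylinder and $b_{1}$ a transverse arc closed up through the cylinder; each of the $g-2$ blocks of the permutation contributes two further cycles built as concatenations of saddle connections $X_{3k+2}, X_{3k+3}$ with short horizontal arcs on the top and bottom of the cylinder, and the initial four symbols $0,1,2,3$ together with the closing symbols $1,0$ provide the last pair. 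A direct check shows that these $2g$ cycles indeed form a symplectic basis.

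\textbf{Step 3 --- evaluating the Arf sum.} For each basis cycle $\gamma$, $\ind(\gamma)\pmod 2$ is computed by adding, at each visited cone point, the contribution of the turning angle (an odd multiple of $\pi$ contributes $1$) together with a contribution from wrapping around the cylinder (contributing $0 \pmod 2$ since the core is horizontal). The block structure reduces the entire Arf computation to a fixed contribution from the initial segment $0,1,2,3$ together with the closing $1,0$, plus $g-2$ identical contributions from the blocks of length three. The plan is to show that each block contributes $0\pmod 2$ and that the boundary contribution evaluates to $1$, yielding $\mathrm{Arf}(\omega)=1$, i.e.\ odd spin. For $g = 3$, where $\cH(2,2)$ also contains the hyperelliptic component, the parity computation suffices to conclude because $\cH^{\mathit{hyp}}(2,2)$ has even spin; for $g\ge 4$ the hyperelliptic component is absent from $\cH(2,\dots,2)$ and there is nothing further to check.

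\textbf{Main obstacle.} The technical heart of the argument is the block-by-block winding-number computation in Step 3. The cleanest implementation should exploit the additivity of the Arf invariant under handle attachment: one expects each elementary block of three saddle connections to act as a ``trivial'' handle that can be continuously deformed, inside the stratum, to a standard model with vanishing Arf contribution. Granting this reduction, the lemma collapses to a verifiable base case $g = 3$ for the permutation
$$
\begin{pmatrix}0 & 1 & 2 & 3 & 4 & 5 & 6 \\ & & 3 & 2 & 4 & 6 & 5 & 1 & 0 \end{pmatrix},
$$
which can be handled by direct inspection of a symplectic basis on the genus-three suspension.
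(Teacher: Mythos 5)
Your Steps 1--2 are fine in outline (the Euler-characteristic count giving $g-1$ zeroes of average degree $2$, plus a local check of the cone angles, and the Arf/index formula \eqref{eq:parity:of:spin:structure} over a symplectic basis adapted to the single cylinder), but the proof has a genuine gap exactly where you place the ``technical heart'': Step 3 is never carried out. You assert that each block contributes $0 \bmod 2$ and the boundary pair contributes $1$, but this is precisely the index bookkeeping the lemma requires, and it cannot simply be assumed: with the most natural choice of block cycles (curves transverse to the horizontal foliation) each block pair contributes $(0+1)(0+1)=1$, not $0$, and the total comes out odd only because the remaining pair contributes $g \bmod 2$; so the distribution of contributions depends delicately on the basis and must be computed, not declared. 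The proposed repair mechanism is also unsound as stated: ``additivity of the Arf invariant under handle attachment'' with each block ``continuously deformed, inside the stratum, to a standard model'' is not a within-stratum deformation at all --- removing or trivializing a block changes the genus and the stratum --- and justifying any such induction down to the base case $g=3$ would require exactly the per-block winding-number computation you are deferring. As written, the argument establishes the stratum but not the parity, i.e.\ not the lemma.

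The gap is closable, and the paper does it by choosing the basis so that the computation becomes immediate: take the $2(g-1)$ cycles $\alpha_k,\beta_k$ in the repetitive patterns and the closed geodesic $\alpha_g$ all \emph{everywhere transverse} to the horizontal foliation, so $\ind=0$ for each of them, and take the single remaining cycle $\beta_g$ to follow the chain of horizontal saddle connections $X_1\to X_4\to\dots\to X_{3g-5}$, skirting each zero; each zero passed forces one full turn of the Gauss map, so $\ind(\beta_g)=g-1$. Then \eqref{eq:parity:of:spin:structure} gives $\varphi=(g-1)\cdot 1+1\cdot\bigl((g-1)+1\bigr)=2g-1\equiv 1 \pmod 2$ with no block-by-block analysis at all. (Your component bookkeeping at the end is correct: for $g\ge 4$ the stratum $\cH(2,\dots,2)$ with $g-1\ge 3$ zeroes has no hyperelliptic component, and for $g=3$ the component $\cH^{\mathit{hyp}}(2,2)$ has even parity, so odd parity suffices.) If you want to keep your basis built from saddle connections, you must actually compute the indices of those partly horizontal curves near the cone points, which is the same kind of argument as for $\beta_g$ above and is the content you omitted.
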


\begin{figure}[htb]
\includegraphics{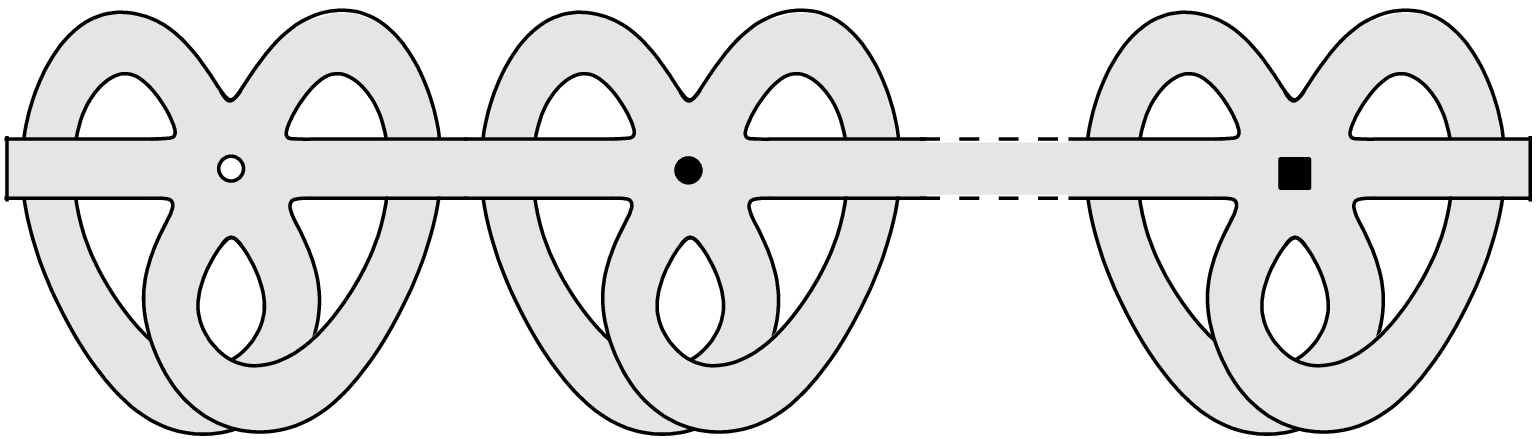}
\includegraphics{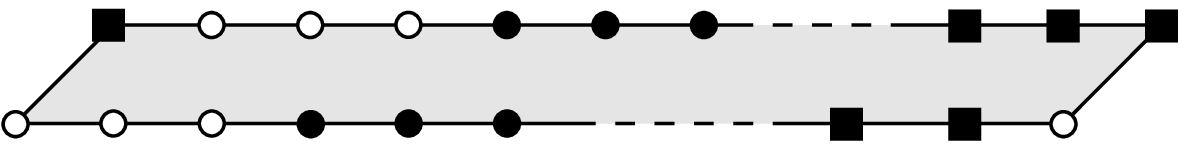}
\begin{picture}(0,0)(2,-10)
\begin{picture}(0,0)(41,-113)
\put(-68,-140){\scriptsize $1$}
\put(-68,-118){\scriptsize $2$}
\put(-29,-118){\scriptsize $3$}
\put(-9,-140){\scriptsize $4$}
\put(12,-118){\scriptsize $5$}
\put(50,-118){\scriptsize $6$}
\put(71,-140){\scriptsize $7$}
\put(81,-139){$\dots$}
\put(105,-118){\scriptsize $3g-4$}
\put(143,-118){\scriptsize $3g-3$}
\put(154,-140){\scriptsize $1$}

\end{picture}

\begin{picture}(0,0)(-1,-8)\put(-94,-116){\scriptsize $1$}
\put(-70,-116){\scriptsize $2$}
\put(-48,-116){\scriptsize $3$}
\put(-24,-113){\circle{10}}
\put(-26,-116){\scriptsize $4$}
\put(-4,-116){\scriptsize $5$}
\put(20,-116){\scriptsize $6$}
\put(43,-116){\scriptsize $7$}
\put(45,-113){\circle{10}}
\end{picture}
\begin{picture}(0,0)(22,-10)
\put(-94,-155){\scriptsize $3$}
\put(-70,-155){\scriptsize $2$}
\put(-46,-152){\circle{10}}
\put(-48,-155){\scriptsize $4$}
\put(-26,-155){\scriptsize $6$}
\put(-4,-155){\scriptsize $5$}
\put(22,-152){\circle{10}}
\put(20,-155){\scriptsize $7$}
\end{picture}
\begin{picture}(0,0)(2,-10)
\put(-124,-135){\scriptsize $0$}
\put(126,-140){\scriptsize $0$}
\end{picture}
\begin{picture}(0,0)(69,-8)
\put(121,-116){\scriptsize $...$}
\put(134,-116){\tiny $3g\!-\!5$}
\put(160,-116){\tiny $3g\!-\!4$}
\put(142,-114){\oval(20,9)}
\put(183,-116){\tiny $3g\!-\!3$}
\end{picture}
\begin{picture}(0,0)(93,-10)
\put(106,-153){\scriptsize $...$}
\put(131,-154){\tiny $3g\!-\!3$}
\put(157,-154){\tiny $3g\!-\!4$}
\put(188,-155){\scriptsize $1$}
\end{picture}
\end{picture}
\vspace{145bp}
\caption{
\label{fig:Jenkins:Strebel:H2222:odd}
A Jenkins--Strebel differential with a single cylinder in the component $\cH^{\mathit{odd}}(2,\dots,2)$ and its cylinder representation. The distinguished symbols $4,7\dots,3g-5$ enumerate saddle connections chosen for contraction.
}
\end{figure}
\begin{proof}[Proof of Lemma \ref{lm:Jenkins:Strebel:H2222:odd}]
The fact that our Abelian differential belongs to the stratum $\cH(\underbrace{2,\dots,2}_{g-1})$ is obvious; it is only necessary to compute the parity of the spin structure of this differential, see \cite{KZ03} and Appendix \ref{a:spin:structure}.

Consider the following collection of closed paths on our flat surface. On the $k$th repetitives pattern of the surface ($k=1,\dots,g-1$) we choose closed paths $\alpha_k, \beta_k$ as indicated in Figure \ref{fig:Jenkins:Strebel:H2222:odd:basis:of:cycles}.
We complete this collection of paths with two paths $\alpha_g$ and $\beta_g$, where $\alpha_g$ is a closed geodesic as in
Figure \ref{fig:Jenkins:Strebel:H2222:odd:basis:of:cycles}
and $\beta_g$ follows the chain of saddle connections
$$
X_1\to X_4\to X_7\to \dots\to X_{3g-5}\to
$$
avoiding the zeroes as indicated in Figure \ref{fig:Jenkins:Strebel:H2222:odd:basis:of:cycles}.
By construction, the paths $\alpha_k,\beta_k$, $k=1,\dots,g$, in each pair have simple transverse intersections, and paths from distinct pairs do not intersect. Hence, we have constructed a canonical basis of cycles.
\begin{figure}[htb]
\includegraphics{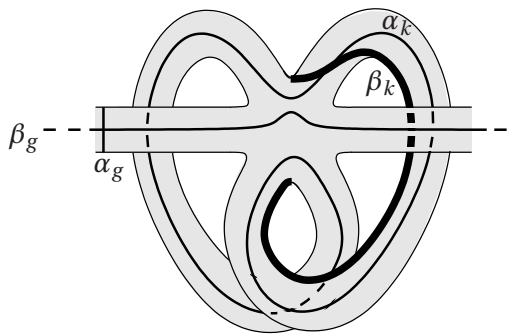}
\begin{picture}(0,0)(38,-106)
\put(-63,-154){$\beta_g$}
\put(-31,-165){$\alpha_g$}
\put(78,-111){$\alpha_k$}
\put(72,-134){$\beta_k$}
\end{picture}
\vspace{120bp} \caption{
\label{fig:Jenkins:Strebel:H2222:odd:basis:of:cycles}
Construction of a convenient canonical basis of cycles.
}
\end{figure}

A smooth path $\gamma$ on a flat surface with trivial linear holonomy defines a natural Gauss map $\gamma\to S^1$ to a circle. Define $\ind(\gamma)$ to be the index  of the Gauss map modulo $2$. It was proved in \cite{KZ03} that having realized a canonical basis of cycles by a collection of smooth connected closed curves avoiding singularities of a flat surface $S\in\cH(2d_1,\dots,2d_\noz)$ one can compute the parity of the spin structure as
\begin{equation}
\label{eq:parity:of:spin:structure}
\varphi(S)\dfn
\sum_{i=1}^g\big(\ind(\alpha_i) +1\big)\big(\ind(\beta_i) +1\big)\mod 2.
\end{equation}
With the exception of the path $\beta_g$ all paths in our family are
everywhere transverse to the horizontal foliation. This implies that the
corresponding indices $\ind(\alpha_1)=\ind(\beta_1)=\dots=\ind(\alpha_g)$
are equal to zero. It is easy to see that $\ind(\beta_g)=g-1$, since every
time $\beta_g$ passes near a zero, the image of the Gauss map makes a complete turn around the circle. Applying now formula \eqref{eq:parity:of:spin:structure} to our collection of paths we see that the parity of the spin structure is odd:
$$
\varphi(S)=
\sum_{i=1}^{g-1}(0 +1)(0 +1)\ +\ (0+1)\big((g-1)+1\big)=
g-1+g\equiv 1 \mod 2
$$
Lemma \ref{lm:Jenkins:Strebel:H2222:odd} is proved.
\end{proof}

\begin{proof}[Proof of Proposition \ref{pr:Jenkins:Strebel:H2222:odd}]
A cylinder representation of our Jenkins--Strebel differential provides us with permutation \eqref{eq:permutation:H222:odd} which by Lemma  \ref{lm:Jenkins:Strebel:H2222:odd}
represents the component $\cH^{\mathit{odd}}(\underbrace{2,\dots,2}_{g-1})$. Note that an embedded chain of\/ $g-1$ saddle connections
$$
\begin{CD}
P_1@>X_4>> P_2 @>X_7>> \dots @>X_{4g-5}>>P_{g-1} \quad
\end{CD}
$$
joins all $g-1$ zeroes of the corresponding Abelian differential.
Hence, contracting the saddle connections indexed by the symbols from the groups indicated in Proposition \ref{pr:Jenkins:Strebel:H2222:odd} we merge a group of zeroes $P_1, \dots P_{d_1}$ of degree $2$ into a single zero of degree $2d_1$, a group of zeroes $P_{d_1+1}, \dots, P_{d_1+d_2}$ of degree $2$ into a single zero of degree $2d_2$, \dots, and  a group of zeroes $P_{d_1+\dots+d_{\noz-1}+1}, \dots, P_{d_1+\dots+d_{\noz-1}+d_\noz}$ of degree $2$ into a single zero of degree $2d_\noz$.
Hence the resulting flat surface belongs to the stratum $\cH(2d_1,\dots,2d_\noz)$.

Recall that merging the zeroes we do not change the parity of the spin
structure, see \cite{KZ03}. (It also follows from formula
\eqref{eq:parity:of:spin:structure} since deforming our flat surface we can
deform the family of the paths in such a way that their indices do
not change.) This implies that the spin structure of the resulting flat
surface has odd parity.

A cylinder representation of the resulting Jenkins--Strebel differential provides us with a permutation obtained from permutation \eqref{eq:permutation:H222:odd} by erasing the symbols enumerating the contracted saddle connections.

It remains to prove that in the two particular cases when we get a surface in  the stratum $\cH(2d,2d)$ or  $\cH(2g-2)$ the resulting flat surface $S$ \emph{does not} belong to the hyperelliptic connected component as soon as $g\ge 3$. Consider the cylinder representation of the surface $S$:
$$
\begin{picture}(0,0)(-2,0)
\put(-3,10){\vector(1,0){0}}
\put(-5,15){\oval(10,10)[bl]}
\put(65,15){\oval(150,10)[t]}
\put(135,15){\oval(10,10)[br]}
\put(-3,-4){\vector(1,0){0}}
\put(-5,-9){\oval(10,10)[tl]}
\put(65,-9){\oval(150,10)[b]}
\put(135,-9){\oval(10,10)[tr]}
\end{picture}
\begin{matrix}
1&2&3&\dots&3g-4&3g-3\\
1&3&2&\dots&3g-3&3g-4
\end{matrix}
\vspace{8pt}
$$
By Proposition \ref{pr:hyperelliptic:Jenkins:Strebel} in Section \ref{ss:H:hyp}, this surface is not hyperelliptic. Proposition \ref{pr:Jenkins:Strebel:H2222:odd} is proved.
\end{proof}

\subsection{Representatives of components $\cH^{\mathit{even}}(2d_1, \dots, 2d_\noz)$}
\label{ss:H2222:even}

Consider a collection $d_1,\dots,d_\noz$ of strictly positive integers satisfying a relation $d_1+\dots+d_\noz=g-1$, where $g\ge 4$.

\begin{Proposition} A permutation obtained by erasing symbols
\label{pr:Jenkins:Strebel:H2222:even}
\begin{multline*}
\underbrace{4,7,\dots,3d_1-2}_{d_1-1},\quad \underbrace{3d_1+4,\dots,3(d_1+d_2)-2}_{d_2-1},\quad  \underbrace{\dots \dots}_{\dots}\ ,
\\
\underbrace{\dots \dots}_{\dots}\ ,\quad
\underbrace{3(d_1+\dots+d_{\noz-1})+4,\dots,3(d_1+\dots+d_\noz)-2}_{d_\noz-1}
\end{multline*}
in the permutation
\begin{equation}
\label{eq:permutation:H222:even}
\begin{pmatrix}
0&1&2&3&4&5&6&&7&8&9&\dots&3g-5&3g-4&3g-3&\\
& &6&5&4&3&2&&7&9&8&\dots&3g-5&3g-3&3g-4&1&0
\end{pmatrix},
\begin{picture}(0,0)(228,-10)
\put(-34,0){\circle{10}}
\put(-34,-13){\circle{10}}
\put(22,0){\circle{10}}
\put(22,-13){\circle{10}}
\put(101,-1){\oval(32,12)}
\put(101,-14){\oval(32,12)}
\end{picture}
\end{equation}
represents the component $\cH^{\mathit{even}}(2d_1,\dots,2d_\noz)$.
\end{Proposition}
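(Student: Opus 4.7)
My plan is to mimic, step by step, the proof of Proposition \ref{pr:Jenkins:Strebel:H2222:odd}, the only real novelty being the spin-structure computation for the new model surface. Concretely, I would first prove a lemma asserting that the Jenkins--Strebel differential with a single cylinder whose cylinder representation is the unreduced permutation \eqref{eq:permutation:H222:even} belongs to $\cH^{\mathit{even}}(\underbrace{2,\dots,2}_{g-1})$. Topologically this surface lies in $\cH(\underbrace{2,\dots,2}_{g-1})$ by the same ribbon-graph inspection as in the odd case: the boundary pattern produces $g-1$ conical points, each with cone angle $6\pi$. What must be checked is that the parity of the spin structure is even.

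To compute the parity I would realize a canonical symplectic basis of cycles by smooth closed curves avoiding the singularities and then apply formula \eqref{eq:parity:of:spin:structure}. For indices $k=2,\dots,g-1$ I would re-use the ``repetitive'' pairs $\alpha_k,\beta_k$ from Figure \ref{fig:Jenkins:Strebel:H2222:odd:basis:of:cycles}, each transverse to the horizontal foliation and hence of Gauss index $0$. The closing pair $\alpha_g,\beta_g$ would again consist of a horizontal core loop $\alpha_g$ (index $0$) and a curve $\beta_g$ running along the chain $X_1\to X_4\to X_7\to\dots\to X_{3g-5}$, contributing index $g-1\pmod 2$. The novelty is the pair $\alpha_1,\beta_1$: the initial block $6,5,4,3,2$ in the lower line of \eqref{eq:permutation:H222:even} makes the local picture near the first zero qualitatively different from the odd case, and I would choose $\alpha_1,\beta_1$ to take advantage of this enlarged reversal so that exactly one of the two acquires Gauss index $1$. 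Summing via \eqref{eq:parity:of:spin:structure} then gives
\[
\varphi(S) \equiv (g-2)\cdot 1 + (0+1)(1+1) + (0+1)(g-1+1) \equiv g + g \equiv 0 \pmod 2,
\]
so the spin parity is even as required.

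Once this lemma is in place, the proof of the proposition is essentially a rerun of the one for the odd component. The embedded chain
\[
\begin{CD}
P_1 @>X_4>> P_2 @>X_7>> \cdots @>X_{3g-5}>> P_{g-1}
\end{CD}
\]
visits every zero. Contracting the indicated groups of saddle connections merges the first $d_1$ zeros of degree $2$ into a single zero of degree $2d_1$, the next $d_2$ into one of degree $2d_2$, and so on, landing in $\cH(2d_1,\dots,2d_\noz)$. Because merging zeros preserves the spin parity (a fact from \cite{KZ03}, also transparent from \eqref{eq:parity:of:spin:structure} since the basis can be deformed so that indices do not change through the contraction), the resulting surface still has even spin, hence lies in $\cH^{\mathit{even}}(2d_1,\dots,2d_\noz)$. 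The cylinder representation of the contracted differential is precisely the permutation obtained from \eqref{eq:permutation:H222:even} by erasing the listed symbols.

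Finally, in the special cases $\cH(2d,2d)$ or $\cH(2g-2)$ I must rule out that the representative we produce falls into the hyperelliptic component. This is done exactly as at the end of the proof of Proposition \ref{pr:Jenkins:Strebel:H2222:odd}, by feeding the cylinder pattern of the contracted differential into Proposition \ref{pr:hyperelliptic:Jenkins:Strebel} of Section \ref{ss:H:hyp}: the pattern fails the hyperelliptic symmetry condition for all $g\ge 4$, so the surface is nonhyperelliptic and must lie in $\cH^{\mathit{even}}$. The main obstacle in this whole scheme is the choice of the pair $\alpha_1,\beta_1$ in the initial block so that its index contributes the extra $1\pmod 2$ relative to the odd case; everything else is formal, and the combinatorial step of erasing symbols and re-reading the contracted cylinder is identical to what was done for $\cH^{\mathit{odd}}$.
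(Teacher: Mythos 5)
Your overall strategy is the one the paper intends (it declares the even case ``completely analogous'' to Lemma \ref{lm:Jenkins:Strebel:H2222:odd} and Proposition \ref{pr:Jenkins:Strebel:H2222:odd} and leaves it as an exercise), but the one place where the analogy is \emph{not} formal --- the construction of the canonical basis and the index count --- is exactly where your sketch breaks down. In the even model \eqref{eq:permutation:H222:even} the repetitive ``hump'' patterns are the blocks $7,8,9$; $10,11,12$; $\dots$; $3g-5,3g-4,3g-3$, so there are only $g-3$ of them, not $g-2$: the initial block $2,3,4,5,6\,/\,6,5,4,3,2$ replaces \emph{two} humps of the odd model. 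Concretely (for $g=4$, symbols $1,\dots,9$ with bottom line $6,5,4,3,2,7,9,8,1$) the initial block consists of five saddle connections $X_2,\dots,X_6$ all joining the same two zeroes, hence it carries \emph{two} symplectic pairs of cycles, while the humps supply only $g-3$ pairs. Your family ``one pair $(\alpha_1,\beta_1)$ in the initial block, pairs $(\alpha_k,\beta_k)$ for $k=2,\dots,g-1$ on repetitive patterns, plus $(\alpha_g,\beta_g)$'' therefore cannot be realized: it asks for a repetitive pattern that does not exist and is one pair short in the initial block. Consequently the displayed sum $(g-2)\cdot 1+(0+1)(1+1)+(0+1)\bigl((g-1)+1\bigr)\equiv 0$ is not an evaluation of \eqref{eq:parity:of:spin:structure} on any actual canonical basis of this surface. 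With the correct counts the sum is $(g-3)+c+\bigl(\ind(\beta_g)+1\bigr)$, where $c$ is the contribution of the \emph{two} initial-block pairs; granting $\ind(\beta_g)=g-1$ (which itself must be rechecked, since the turning of the chain $X_1\to X_4\to\dots\to X_{3g-5}$ at the two initial-block zeroes differs from the odd picture), one needs $c\equiv 1 \pmod 2$, e.g.\ one pair with indices $(0,0)$ and one with $(0,1)$. Producing those two explicit pairs supported on $X_2,\dots,X_6$ and computing their Gauss indices (or, equivalently, running the $\Zz$-orthogonalization of Appendix \ref{a:spin:structure} on the first-return cycles) is the actual content of the lemma, and in your proposal it is only asserted (``I would choose $\alpha_1,\beta_1$ \dots\ so that exactly one of the two acquires Gauss index $1$'').

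The remaining steps do match the paper's intended argument and are fine as sketched: the chain $X_4,X_7,\dots,X_{3g-5}$ joins all $g-1$ zeroes; contracting the indicated groups merges them into zeroes of degrees $2d_1,\dots,2d_\noz$ without changing the parity of the spin structure; and in the cases $\cH(2g-2)$ and $\cH(2d,2d)$ one excludes the hyperelliptic component by checking, via Proposition \ref{pr:hyperelliptic:Jenkins:Strebel}, that the contracted cylinder pattern is not of the palindromic form \eqref{eq:hyperelliptic:Jenkins:Strebel} --- a check you state rather than perform, but it is routine once the cylinder pattern of the contracted differential is written out.
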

The proof is based on the following simple Lemma.
\begin{Lemma}
\label{lm:Jenkins:Strebel:H2222:even}
A Jenkins--Strebel differential with a single cylinder as in Figure \ref{fig:Jenkins:Strebel:H2222:even} belongs to the component $\cH^{\mathit{even}}(\underbrace{2,\dots,2}_{g-1})$, where $g\ge 4$.
\end{Lemma}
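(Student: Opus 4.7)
The plan is to follow the proof of Lemma \ref{lm:Jenkins:Strebel:H2222:odd} step by step, modifying only the ingredients that depend on the shape of the head of the permutation---where the block $2,3,4,5,6$ is now fully reversed (instead of only $2,3$ as in the odd case)---and to verify that this change flips the parity produced by formula \eqref{eq:parity:of:spin:structure}.

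First I would read the stratum directly from Figure \ref{fig:Jenkins:Strebel:H2222:even}: a vertex count on the cylinder diagram \eqref{eq:permutation:H222:even} yields $g-1$ zeroes, each of cone angle $6\pi$, so the Abelian differential lies in $\cH(\underbrace{2,\dots,2}_{g-1})$, exactly as in the odd case. Next I would choose a canonical symplectic basis $\{(\alpha_k,\beta_k)\}_{k=1}^g$ of $H_1(S;\Z{})$ following the template of Figure \ref{fig:Jenkins:Strebel:H2222:odd:basis:of:cycles}. Each of the ``standard'' repetitive blocks of three symbols (the triples $\{7,8,9\},\{10,11,12\},\dots,\{3g-5,3g-4,3g-3\}$) supports one pair $(\alpha_k,\beta_k)$ of short loops everywhere transverse to the horizontal foliation, and the enlarged head supports the extra pairs needed so that the total count matches the genus. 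Choosing all these representatives transverse to the horizontal foliation gives $\ind(\alpha_k)=\ind(\beta_k)=0$ for $k<g$. Finally $\alpha_g$ is the horizontal waist of the cylinder (so $\ind(\alpha_g)=0$), while $\beta_g$ is a chain of saddle connections joining all the zeroes, pushed slightly off the singular points so that its Gauss map is well defined.

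The heart of the argument is the computation of $\ind(\beta_g)$ modulo $2$. As in the odd case, each time $\beta_g$ passes near a conical point of cone angle $6\pi$ its Gauss map makes one complete turn around $S^1$. The standard tail of the diagram contributes exactly the same amount as in Lemma \ref{lm:Jenkins:Strebel:H2222:odd}; the enlarged head, however, inserts exactly one additional full turn, because reversing a block of five symbols (instead of two) forces $\beta_g$ to slip past one more zero before leaving the head. Consequently $\ind(\beta_g)\equiv g\mod 2$, which is one more than in the odd case. Substituting into \eqref{eq:parity:of:spin:structure} yields
\begin{equation*}
\varphi(S)=\sum_{i=1}^{g-1}(0+1)(0+1)\ +\ (0+1)(g+1)=(g-1)+(g+1)\equiv 0\mod 2,
\end{equation*}
so $S\in\cH^{\mathit{even}}(\underbrace{2,\dots,2}_{g-1})$.

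The main obstacle is the local construction inside the enlarged head: one must verify, on a picture analogous to Figure \ref{fig:Jenkins:Strebel:H2222:odd:basis:of:cycles}, both that the additional basis cycles associated with the reversal of five consecutive symbols can be drawn as loops everywhere transverse to the horizontal foliation, and that the Gauss map of $\beta_g$ acquires exactly one more full turn modulo $2$ during its passage through the head than in the odd case. The hypothesis $g\ge 4$ enters precisely here: it provides just enough room to fit the enlarged head together with at least one standard block, so that the chain defining $\beta_g$ can be drawn without collisions.
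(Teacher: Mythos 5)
Your overall strategy---computing the parity via formula \eqref{eq:parity:of:spin:structure} on a canonical basis adapted to Figure \ref{fig:Jenkins:Strebel:H2222:even}, as in Lemma \ref{lm:Jenkins:Strebel:H2222:odd}---is the intended one (the paper leaves this as an exercise), but two of your key assertions are false, and your final ``even'' answer comes out right only because the two errors cancel. First, $\alpha_g$ cannot be the horizontal waist. Formula \eqref{eq:parity:of:spin:structure} requires a canonical basis, and the waist violates every relevant intersection condition: it can be pushed off a neighborhood of the critical graph while $\beta_g$ lies inside such a neighborhood, so $\alpha_g\cdot\beta_g=0$ rather than $1$; and every closed curve transverse to the horizontal foliation must cross the cylinder with all crossings of the same sign, hence meets the waist nontrivially, so the waist is not orthogonal to a single one of your $\alpha_k,\beta_k$, $k<g$. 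In the odd-case template $\alpha_g$ is a closed geodesic \emph{transverse} to the horizontal foliation, crossing the cylinder once through $X_1$; that is what must be used here as well.

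Second, the index count you call the heart of the argument is wrong. For the chain $\beta_g$ along $X_1\to X_4\to\dots\to X_{3g-5}$ in Figure \ref{fig:Jenkins:Strebel:H2222:even}, the local picture at each of the $g-1$ zeroes is the same as in the odd case: for $g=4$, with $\beta_g$ pushed just below the top boundary, the passage from $X_1$ to $X_4$ crosses only $X_2$ and $X_3$ near their endpoints and sweeps half a top sector, one full bottom sector and half a top sector, i.e.\ exactly one full turn, and likewise at the zeroes joining $X_4$ to $X_7$ and $X_7$ back to $X_1$. Hence $\ind(\beta_g)\equiv g-1$, not $g$ (and by the theorem of Johnson this parity does not depend on the representative); there is no ``extra turn in the head''. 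Consequently, if your remaining assumption were true---that the reversed block $2,\dots,6$ carries two symplectic pairs of index-zero loops transverse to the horizontal foliation, compatible with the tail pairs and with $(\alpha_g,\beta_g)$---formula \eqref{eq:parity:of:spin:structure} would give $(g-1)+1\cdot\big((g-1)+1\big)\equiv 1$, i.e.\ odd parity, contradicting the statement. So that assumption fails too, and it is exactly where the even case differs from the odd one: the five cylinder-crossing cycles through $X_2,\dots,X_6$ pairwise intersect, and any canonical completion of the basis over the head necessarily contains cycles of odd index (on which the quadratic form vanishes); these head pairs, not $\beta_g$, are what flip the parity. A correct write-up must either construct such head cycles explicitly and compute their indices, or evaluate the head's contribution algebraically as in Appendix \ref{a:spin:structure} (for $g=4$ one run of the orthogonalization produces four pairs with $\Phi$-values $(1,1),(0,0),(1,1),(0,0)$, hence even parity). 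Finally, the role of $g\ge 4$ is not ``room to draw $\beta_g$'': the figure makes sense for $g=3$ as well, but then it represents $\cH^{\mathit{hyp}}(2,2)$, and no separate even component exists.
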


\begin{figure}[htb]
\includegraphics{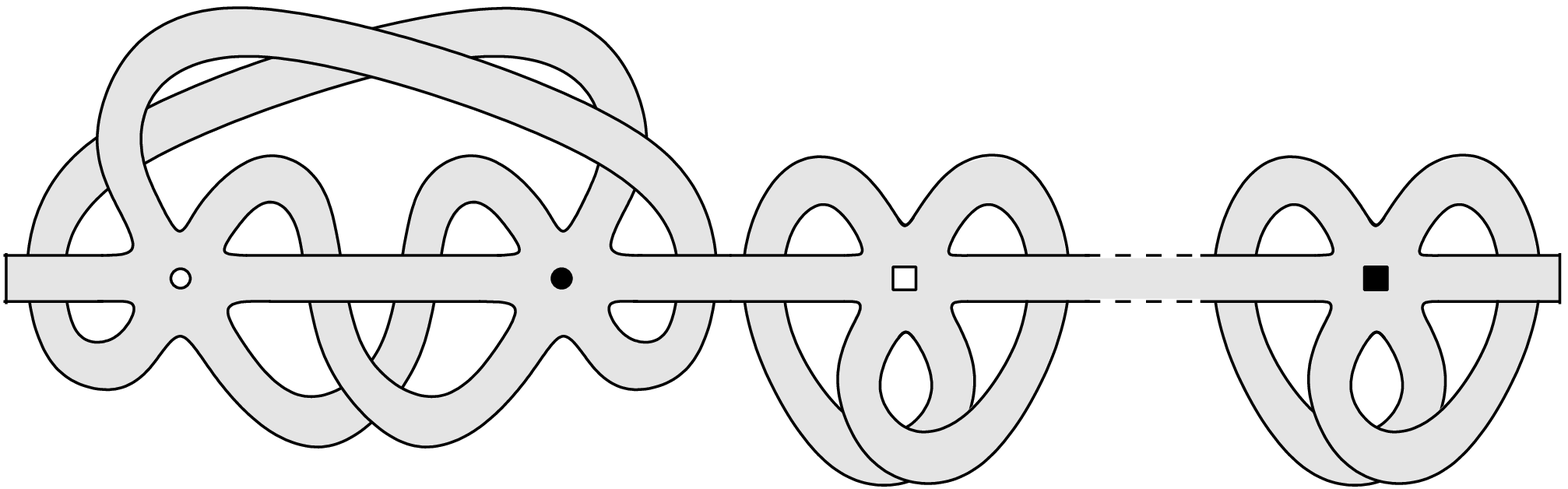}
\includegraphics{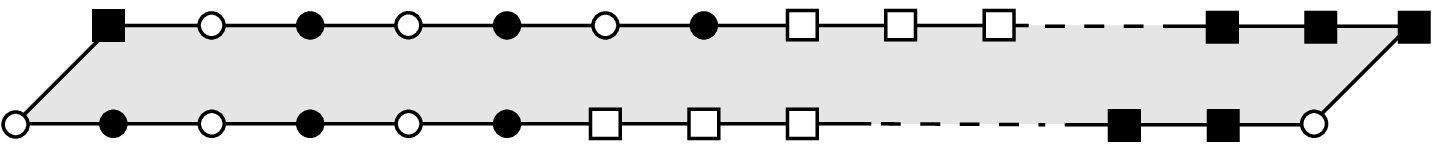}
\begin{picture}(0,0)(88,-93)
\put(-67,-140){\scriptsize $1$}
\put(-67,-100){\scriptsize $2$}
\put(25,-180){\scriptsize $3$}
\put(-7,-140){\scriptsize $4$}
\put(-39,-180){\scriptsize $5$}
\put(54,-100){\scriptsize $6$}
\put(73,-140){\scriptsize $7$}
\put(92,-118){\scriptsize $8$}
\put(130,-118){\scriptsize $9$}
\put(165,-140){$\dots$}
\put(187,-118){\scriptsize $3g-4$}
\put(224,-118){\scriptsize $3g-3$}
\put(236,-140){\scriptsize $1$}
\end{picture}

\begin{picture}(0,0)(0,-13)
\begin{picture}(0,0)(36,2)
\put(-93,-116){\scriptsize $1$}
\put(-70,-116){\scriptsize $2$}
\put(-48,-116){\scriptsize $3$}
\put(-23,-113){\circle{10}}
\put(-25,-116){\scriptsize $4$}
\put(-2,-116){\scriptsize $5$}
\put(20,-116){\scriptsize $6$}
\put(43,-116){\scriptsize $7$}
\put(45,-113){\circle{10}}
\put(66,-116){\scriptsize $8$}
\put(89,-116){\scriptsize $9$}
\put(111,-116){\scriptsize $...$}
\put(133,-116){\tiny $3g\!-\!5$}
\put(157,-116){\tiny $3g\!-\!4$}
\put(141,-114){\oval(20,9)}
\put(180,-116){\tiny $3g\!-\!3$}
\put(-120,-134){\scriptsize $0$}
\put(187,-138){\scriptsize $0$}
\end{picture}
\begin{picture}(0,0)(59,0)
\put(-94,-155){\scriptsize $6$}
\put(-70,-155){\scriptsize $5$}
\put(-46,-152){\circle{10}}
\put(-48,-155){\scriptsize $4$}
\put(-26,-155){\scriptsize $3$}
\put(-4,-155){\scriptsize $2$}
\put(21,-152){\circle{10}}
\put(19,-155){\scriptsize $7$}
\put(42,-155){\scriptsize $9$}
\put(65,-155){\scriptsize $8$}
\put(98,-153){\scriptsize $...$}

\put(133,-155){\tiny $3g\!-\!3$}
\put(156,-155){\tiny $3g\!-\!4$}
\put(185,-155){\scriptsize $1$}
\end{picture}
\end{picture}
\vspace{155bp} \caption{
\label{fig:Jenkins:Strebel:H2222:even}
A Jenkins--Strebel differential with a single cylinder in the component
$\cH^{\mathit{even}}(2,\dots,2)$ and its cylinder representation. The
distinguished symbols $4,7\dots,3g-5$ enumerate the saddle connections
chosen for contraction.
}
\end{figure}
\begin{proof}
The proofs of Lemma \ref{lm:Jenkins:Strebel:H2222:even} and of Proposition
\ref{pr:Jenkins:Strebel:H2222:even} are completely analogous to the ones of
Lemma \ref{lm:Jenkins:Strebel:H2222:odd} and of Proposition
\ref{pr:Jenkins:Strebel:H2222:odd} and we leave them to the reader as an
exercise.
\end{proof}

\begin{NNRemark}
Note that by assumption we have $g\ge 4$, so Figure
\ref{fig:Jenkins:Strebel:H2222:even} contains at least one repetitive
pattern as in Figure \ref{fig:Jenkins:Strebel:H2222:odd:basis:of:cycles}.
Note that for $g=2$ Figure \ref{fig:Jenkins:Strebel:H2222:even} does not make sense. To define the Figure for $g=3$ one has to consider it without repetitive patterns. By Proposition \ref{pr:hyperelliptic:Jenkins:Strebel} the
resulting Jenkins--Strebel differential belongs to
$\cH^{\mathit{hyp}}(2,2)$. This is not a coincidence: any flat surface in
$\cH(2,2)$ or in $\cH(4)$ having even parity of the spin-structure
necessarily belongs to the corresponding hyperelliptic component.
\end{NNRemark}

\subsection{Representatives of components $\cH^{\mathit{hyp}}(g-1,g-1)$ and $\cH^{\mathit{hyp}}(2g-2)$}
\label{ss:H:hyp}

\begin{Proposition}
\label{pr:hyperelliptic:Jenkins:Strebel}
Let $\omega$ be an Abelian Jenkins--Strebel differential with a single cylinder. If it belongs to a hyperelliptic connected component, then a natural cyclic structure \eqref{eq:js:prepermutation} on the set of horizontal saddle connections of $\omega$ has the following form:
\begin{equation}
\label{eq:hyperelliptic:Jenkins:Strebel}
\begin{picture}(0,0)(-3,0)
\put(-3,10){\vector(1,0){0}}
\put(-5,15){\oval(10,10)[bl]}
\put(50,15){\oval(120,10)[t]}
\put(105,15){\oval(10,10)[br]}
\put(-3,-4){\vector(1,0){0}}
\put(-5,-9){\oval(10,10)[tl]}
\put(50,-9){\oval(120,10)[b]}
\put(105,-9){\oval(10,10)[tr]}
\end{picture}
\begin{matrix}
1&2&\dots&k-1&k\\
k&k-1&\dots&2&1
\end{matrix}
\vspace{8pt}\quad.
\end{equation}
The Abelian differential $\omega$
belongs to $\cH^{\mathit{hyp}}(2g-2)$ when $k=2g-1$ is odd and to $\cH^{\mathit{hyp}}(g-1,g-1)$ when $k=2g$ is even.
\end{Proposition}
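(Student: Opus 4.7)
The plan is to exploit the hyperelliptic involution $\iota$ and to deduce the combinatorics from its action on the unique horizontal cylinder $C$. Since $\iota$ acts as $-1$ on $H^0(K)$ on any hyperelliptic surface, $\iota^*\omega=-\omega$, so $\iota$ preserves the horizontal foliation and by uniqueness of the cylinder we have $\iota(C)=C$. Writing $z=x+iy$ on $C=(\mathbb R/L\mathbb Z)\times[0,h]$, the relation $\iota^*(dz)=-dz$ forces $\iota|_C(z)=a+ih-z$ for some real constant $a$, so $\iota$ restricts to an affine central symmetry of $C$ which interchanges $\partial C^+$ with $\partial C^-$ while reversing the $x$-orientation on each.

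Next I would count the fixed points of $\iota$, of which there are exactly $2g+2$ (the Weierstrass points). The central symmetry has exactly two fixed points in the open cylinder, lying on the horizontal circle $y=h/2$. Among the zeros of $\omega$ there are $f_z=1$ fixed zeros in the case of $\cH^{\mathit{hyp}}(2g-2)$ (the unique zero must be fixed) and $f_z=0$ in the case of $\cH^{\mathit{hyp}}(g-1,g-1)$ (the two zeros are interchanged by definition of this component). Any further fixed point must lie in the open interior of a horizontal saddle connection, and each such connection contributes at most one fixed point, namely its midpoint, and only if $\iota$ maps the connection to itself with reversed orientation. Writing $f_s$ for the number of $\iota$-invariant saddle connections, I obtain
$$
2+f_z+f_s=2g+2.
$$
An independent count of saddle connections via $\chi(S)=\chi(G)+\chi(\bar C)-\chi(\partial\bar C)$ for the decomposition of $S$ into the ribbon graph $G$ and the closed cylinder $\bar C$ gives $V-E=2-2g$ (since $\chi(\bar C)=\chi(\partial\bar C)=0$), hence $E=2g-1$ when $V=1$ and $E=2g$ when $V=2$. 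In both cases $f_z+E=2g$, so $f_s=E$: every saddle connection is $\iota$-invariant.

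The palindromic structure \eqref{eq:hyperelliptic:Jenkins:Strebel} is then immediate. Since $\iota$ carries $\partial C^+$ bijectively onto $\partial C^-$ reversing the $x$-orientation and fixes each saddle connection setwise, the cyclic word $\alpha_1,\alpha_2,\dots,\alpha_k$ on $\partial C^+$ read in the $+x$ direction is carried to the cyclic word $\alpha_k,\alpha_{k-1},\dots,\alpha_1$ on $\partial C^-$ read in the $+x$ direction, up to a freely chosen origin on $\partial C^-$. Relabeling $\alpha_i\mapsto i$ yields exactly \eqref{eq:hyperelliptic:Jenkins:Strebel}, with $k=E=2g-1$ odd in the case $\cH^{\mathit{hyp}}(2g-2)$ and $k=E=2g$ even in the case $\cH^{\mathit{hyp}}(g-1,g-1)$.

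The delicate step is the Weierstrass-point count: one must verify that the three enumerated families of fixed points (the two open-cylinder centers, the $\iota$-fixed zeros, and the midpoints of $\iota$-invariant saddle connections) are pairwise disjoint and jointly exhaust all $2g+2$ fixed points of $\iota$. Disjointness is clear from the stratification (open cylinder, ribbon-graph vertices, open ribbon-graph edges), but the final conclusion $f_s=E$, on which the whole combinatorial statement rests, depends crucially on this bookkeeping.
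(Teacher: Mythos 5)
Your argument is correct, but it takes a genuinely different route from the paper. The paper's proof first perturbs the lengths of the horizontal saddle connections to make them pairwise distinct (a deformation that stays inside the connected component), and then observes that the hyperelliptic involution $\tau$, which preserves the horizontal cylinder and swaps its two boundary components while permuting saddle connections isometrically, must send each $X_i$ to itself simply because no two lengths agree; the inverted cyclic orders follow at once, and the parity statement is dismissed as a simple exercise. You avoid the perturbation entirely: after the same initial step ($\iota^*\omega=-\omega$, hence $\iota$ restricts to the central symmetry $(x,y)\mapsto(a-x,h-y)$ of the unique cylinder), you force the invariance of \emph{every} saddle connection by a global count, matching the $2g+2$ Weierstrass points against the two interior fixed points on the core circle, the $\iota$-fixed zeroes ($1$ or $0$ according to the component), and the midpoints of invariant edges, and comparing with the Euler-characteristic count $E=V+2g-2$ of edges. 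This buys a proof that works for the given differential without deformation and yields as a by-product the exact location of all Weierstrass points (and the value of $k$ directly, so the last sentence of Proposition \ref{pr:hyperelliptic:Jenkins:Strebel} is not left as an exercise); the price is reliance on the classical facts that the hyperelliptic involution acts as $-1$ on holomorphic $1$-forms and has exactly $2g+2$ fixed points, whereas the paper's genericity trick is shorter and more elementary. One caveat: the paper's closing ``simple exercise'' also records the converse-flavored fact that a single-cylinder surface with structure \eqref{eq:hyperelliptic:Jenkins:Strebel} carries a cylinder symmetry realizing the hyperelliptic involution (interchanging the two zeroes when $k$ is even), which is what makes Proposition \ref{pr:54321} a corollary; your proof establishes only the implication actually stated in Proposition \ref{pr:hyperelliptic:Jenkins:Strebel}, so if you want to feed into Proposition \ref{pr:54321} you would still need that short verification.
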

\begin{proof}
Deforming if necessary the lengths of the horizontal saddle connections (which does not change the connected component of $\omega$) we may assume that they are all distinct (and strictly positive).

A hyperelliptic involution $\tau$ acts on any Abelian differential $\omega$ as $\tau^\ast\omega=-\omega$. Hence the induced isometry of the corresponding flat surface preserves the horizontal foliation and changes the orientation of the foliation. This implies that the isometry $\tau$ acts on the horizontal cylinder by an orientation-preserv\-ing involution interchanging the boundaries of the cylinder. Since zeroes are mapped to zeroes, horizontal saddle connections are isometrically mapped to horizontal saddle connections. Since their lengths are different, every saddle connection $X_i$ is mapped to $X_i$ for $i=1,\dots, k$. This proves that the cyclic orders of the horizontal saddle connections on two components of the cylinder are inverse to each other, or, in other words, that the combinatorics of the cylinder representation of our Jenkins--Strebel differential are encoded by relation \eqref{eq:hyperelliptic:Jenkins:Strebel}. A simple exercise shows that the corresponding flat surface belongs to $\cH^{\mathit{hyp}}(2g-2)$ when $k=2g-1$ is odd and to $\cH^{\mathit{hyp}}(g-1,g-1)$ when $k=2g$ is even, and that in the latter case the symmetry of the cylinder (hyperelliptic involution) interchanges the two zeroes.
\end{proof}

As a corollary we obtain the following proposition which was basically proved by W. A. Veech in \cite{Veech:hyperelliptic:I} in slightly different terms.

\begin{Proposition}[W. A. Veech]
\label{pr:54321}
The permutation
$$
\begin{pmatrix}
1&2&\dots&2g-1&2g\\
2g&2g-1&\dots&2&1
\end{pmatrix}
$$
represents the component $\cH^{\mathit{hyp}}(2g-2)$.

The permutation
$$
\begin{pmatrix}
1&2&\dots&2g&2g+1\\
2g+1&2g&\dots&2&1
\end{pmatrix}
$$
represents the component $\cH^{\mathit{hyp}}(g-1,g-1)$.
\end{Proposition}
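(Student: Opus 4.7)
The plan is to derive both statements directly from Proposition \ref{pr:hyperelliptic:Jenkins:Strebel} by converting its cylinder description into an interval-exchange permutation via the diagonal-cut construction of Section \ref{ss:generailzed:permutation}. The key point is that the reverse cyclic pattern \eqref{eq:hyperelliptic:Jenkins:Strebel}, unfolded by one vertical cut, naturally produces the full reversing permutation in the statement.

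First I would construct a hyperelliptic Jenkins--Strebel differential with the cyclic structure \eqref{eq:hyperelliptic:Jenkins:Strebel}. Starting from arbitrary positive lengths $|X_1|,\dots,|X_k|$, the prescribed identifications pair segments of equal length and close up into a flat surface with one horizontal cylinder. Writing this cylinder as $[0,L]\times[0,H]$ with vertical sides identified, the involution $\tau(x,y)=(L-x,H-y)$ is compatible with the gluings---it sends the point at distance $t$ from the left endpoint of top $X_i$ to the point at distance $|X_i|-t$ from the left endpoint of bottom $X_i$, in agreement with the identification on the opposite partner---so $\tau$ descends to an involution of the surface with $\tau^*\omega=-\omega$, certifying hyperellipticity. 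Labeling the top vertices $T_0,\dots,T_{k-1}$ and the bottom vertices $B_0,\dots,B_{k-1}$ cyclically, one reads from the gluings $T_i\sim B_{i-1}$ and $T_i\sim B_{i+1}$, so the vertex identifications are generated by the shift $j\mapsto j\pm 2$ modulo $k$. For $k=2g-1$ odd this acts transitively and all $2k$ corners form a single cone point of degree $2g-2$, putting the surface in $\cH^{\mathit{hyp}}(2g-2)$. For $k=2g$ even the corners split by parity into two classes of size $k$ each, giving two zeros of degree $g-1$, and the identity $\tau(T_i)=B_i$ shows $\tau$ exchanges the two classes, so the surface lies in $\cH^{\mathit{hyp}}(g-1,g-1)$.

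Next, I apply the unfolding recipe of Section \ref{ss:generailzed:permutation}: cut the cylinder along the vertical segment joining the upper-left corner to the lower-left corner, label this segment $\alpha_0$, and read off the cylindrical generalized permutation
$$
\begin{pmatrix}\alpha_0 & X_1 & X_2 & \dots & X_k & \\ & X_k & X_{k-1} & \dots & X_1 & \alpha_0\end{pmatrix}.
$$
Every symbol appears exactly once in each row, so this is a genuine permutation of $k+1$ elements. Relabeling $\alpha_0\mapsto 1$ and $X_i\mapsto i+1$ produces precisely the claimed permutations: on $2g$ symbols for $k=2g-1$, and on $2g+1$ symbols for $k=2g$. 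The only nontrivial step is the orbit count for the cone points, but once one observes that the identifications act on vertex indices by the $\pm 2$-shift, the parity-dependent behavior---one orbit when $k$ is odd, two when $k$ is even---matches the singularity patterns of the two hyperelliptic components exactly.
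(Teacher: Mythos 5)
Your route is the one the paper intends: Proposition \ref{pr:54321} is read off from the single-cylinder picture of Proposition \ref{pr:hyperelliptic:Jenkins:Strebel}, whose converse part (reversing cyclic structure \eqref{eq:hyperelliptic:Jenkins:Strebel} $\Rightarrow$ hyperelliptic component) the paper leaves as a ``simple exercise''; your cone-point count via the $\pm 2$ shift and the final cut along a vertical segment producing the reversing permutation on $k+1$ symbols are both correct. The one genuine soft spot is the sentence ``so $\tau$ descends to an involution of the surface with $\tau^*\omega=-\omega$, certifying hyperellipticity.'' That implication is false in general: an involution that is anti-invariant on one particular holomorphic $1$-form only forces the quotient to have genus at most $g-1$ (e.g., a genus-$3$ surface with an involution whose quotient has genus $1$ carries such forms without being hyperelliptic, and even on a hyperelliptic surface such a $\tau$ need not be \emph{the} hyperelliptic involution). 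The paper's own proof of Proposition \ref{pr:hyperelliptic:Jenkins:Strebel} uses only the true, necessary direction of this implication. Your argument needs the quotient $S/\tau$ to be a sphere, and this matters twice: membership in $\cH^{\mathit{hyp}}(g-1,g-1)$ requires the two zeroes to be interchanged by \emph{the} hyperelliptic involution, so you must know that your $\tau$ is it.

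The repair is short and stays entirely inside your construction. The fixed points of $\tau(x,y)=(L-x,H-y)$ on the glued surface are: the two regular points $x=0$ and $x=L/2$ on the core circle $y=H/2$; the $k$ midpoints of the horizontal saddle connections $X_1,\dots,X_k$ (you already observed that $\tau$ maps each $X_i$ to itself reversing its orientation); and, when $k=2g-1$, the unique zero (it is fixed because $\tau$ permutes the singularities). This gives $(2g-1)+2+1=2g+2$ fixed points for $k=2g-1$, and $2g+2+0=2g+2$ for $k=2g$, where, as you showed, the two zeroes are swapped rather than fixed. Riemann--Hurwitz then forces the quotient to have genus $0$, so $\tau$ is the hyperelliptic involution, and both memberships (and, in the even case, the required interchange of the zeroes by the hyperelliptic involution) follow. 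With this fixed-point count inserted, your proof coincides with the paper's argument.
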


\begin{NNRemark}
The Rauzy class  generated by the permutation
$$
\begin{pmatrix}
\noi&\noi-1&\dots&2&1\\
1&2&\dots&\noi-1&\noi
\end{pmatrix}
$$
was studied  in   the   original   paper \cite{Rauzy}  of  Rauzy. In particular it was shown that its cardinality equals $2^{\noi-1}-1$.
Numerous results on hyperelliptic flat surfaces and on their polygonal representations were obtained in the paper of W. A. Veech \cite{Veech:hyperelliptic:I}.
\end{NNRemark}
\section{Representatives of strata of quadratic differentials}
\label{s:quadratic}

In this section we consider meromorphic quadratic differentials with at
most simple poles. The strata of quadratic differentials which are
\emph{not} global squares of Abelian differentials are denoted by
$\cQ(d_1,\dots,d_\noz,\underbrace{-1,\dots,-1}_p)$ By convention,
throughout Section \ref{s:quadratic} we will denote the degrees of the
zeroes of a quadratic differential by $d_k$, \ie $d_k\ge 1$,
$k=1,\dots,\noz$. For brevity we shall often use the notation
$\cQ(d_1,\dots,d_\noz,-1^p)$ to indicate the number $p$ of simple poles.

\subsection*{Empty strata.}
Any collection of zeroes of a meromorphic quadratic differential with $p\ge 0$ simple poles on a surface of genus $g\ge 0$ satisfies the relation
$$
d_1+\dots+d_\noz-p=4g-4.
$$
In contrast with Abelian differentials where any collection of integer
numbers satisfying an analogous relation defines a nonempty stratum, there
are four exceptions for quadratic differentials. They are given by the
following theorem, see \cite{MS}.

\begin{NNTheorem}[H. Masur and J. Smillie]
Any collection $d_1,\dots d_\noz; p$ of positive integers $d_1,\dots d_\noz$ and of a nonnegative integer $p$ satisfying the relation
$$
d_1+\dots+d_\noz-p=4g-4,\quad\text{with }\ g\ge 0
$$
defines a nonempty stratum $\cQ(d_1,\dots,d_\noz,-1^p)$ of meromorphic quadratic differentials with exception for the following four empty strata in genera one and two:
$$
\cQ(\emptyset),\quad \cQ(1,-1),\quad \cQ(3,1),\quad  \cQ(4).
$$
\end{NNTheorem}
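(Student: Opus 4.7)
The plan is to treat the two directions of the theorem separately.

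\emph{Nonemptyness.} For each admissible $(g,p;d_1,\dots,d_\noz)$ outside the exception list I would construct a flat surface realizing the stratum. The strategy is to start from a Jenkins--Strebel differential with a single cylinder in the principal stratum $\cQ(1^{4g-4+p},-1^p)$ --- such a surface can be written down explicitly in much the same spirit as the constructions of the present paper --- and then merge simple zeros in prescribed groups by contracting horizontal saddle connections joining them, producing zeros of the required degrees $d_i$. This works provided that each contracted saddle connection has no homologous companion, so that the merging is transverse and yields a surface in the nondegenerate neighbouring stratum. For all data outside the four exceptions this can be arranged; the low-complexity base cases ($g\le 2$, $p$ small) are verified by direct polygonal constructions, e.g.\ as ramified double covers of the pillow $\cQ(-1^4)$ branched at marked points.

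\emph{Emptyness of $\cQ(\emptyset)$ and $\cQ(1,-1)$.} Each is dispatched by an elementary Picard-group argument. A nowhere-vanishing holomorphic section of $K_S^{\otimes 2}$ forces $\deg K_S=0$, hence $g=1$ and $K_S$ trivial; then $q=c\omega^2$ for any generator $\omega$ of $H^0(K_S)$, so $q$ is a global square and is excluded by convention --- ruling out $\cQ(\emptyset)$. For $\cQ(1,-1)$ on an elliptic curve $E$, triviality of $K_E$ makes $q$ a meromorphic function of divisor $P-Q$ with $P\ne Q$; Abel's theorem then forces $P=Q$, a contradiction.

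\emph{Emptyness of $\cQ(4)$ and $\cQ(3,1)$.} These use the hyperelliptic structure specific to genus two. For $\cQ(4)$, $4P\sim 2K_S$ gives $2P\sim K_S$, so $P$ is a Weierstrass point; Riemann--Roch then yields $h^0(2K_S-4P)=h^0(\cO_S)=1$, so the quadratic differential with divisor $(q)=4P$ is unique up to scalar. Since $h^0(K_S-2P)=1$ likewise produces a unique Abelian differential $\omega$ with $(\omega)=2P$, one has $q=c\omega^2$, a global square, again excluded. For $\cQ(3,1)$, hyperelliptic invariance of every quadratic differential on a genus-two curve forces the involution $\iota$ to preserve the divisor $3P+Q$; since the multiplicities $3$ and $1$ differ, both $P$ and $Q$ must be individually $\iota$-fixed, hence Weierstrass. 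Then $2P\sim K_S\sim 2Q$, so $3P+Q\sim 2K_S$ reduces to $P\sim Q$, forcing $P=Q$ --- a contradiction. The principal obstacle is not any single argument here but the verification that the merging-of-zeros construction in the nonemptyness direction actually reaches \emph{every} non-exceptional stratum; in particular one has to confirm that the merging necessarily produces a degenerate surface in exactly the four exceptional cases, consistent with the Picard-theoretic emptyness proofs just given.
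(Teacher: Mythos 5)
First, a point of comparison: the paper does not prove this statement at all --- it is quoted from Masur--Smillie \cite{MS} --- so your argument must stand entirely on its own, and as written it has two genuine gaps. The sharper one is in the emptiness proof for $\cQ(4)$: the step ``$4P\sim 2K_S$ gives $2P\sim K_S$'' is not valid as stated, because $4P\sim 2K_S$ only says that $K_S-2P$ is a $2$-torsion class in the Jacobian, and one cannot divide by $2$ there (a genus-$2$ Jacobian has sixteen $2$-torsion classes). The conclusion is true, but to reach it you need the fact you invoke only for $\cQ(3,1)$ and never justify: on a genus-$2$ curve $H^0(2K_S)=\mathrm{Sym}^2H^0(K_S)$ (both have dimension $3$), so every holomorphic quadratic differential factors as a product $\eta_1\eta_2$ of Abelian differentials and is in particular invariant under the hyperelliptic involution $\iota$. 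Invariance of the divisor $4P$ forces $\iota(P)=P$, i.e.\ $P$ Weierstrass, after which your Riemann--Roch computation (or simply the factorization, since each $(\eta_i)=P_i+\iota P_i$) disposes of both $\cQ(4)$ and $\cQ(3,1)$. State and prove that invariance lemma; it is the hidden engine of both genus-$2$ cases. The genus-$1$ arguments for $\cQ(\emptyset)$ and $\cQ(1,-1)$ are correct, given the convention that $\cQ(\dots)$ excludes global squares.

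The second gap is that the nonemptiness direction is only a plan. The claim that outside the four exceptions the contracted saddle connections ``can be arranged'' to have no homologous companions is precisely the nontrivial content, and you give no argument for it; nor do you establish nonemptiness of the principal strata $\cQ(1^{4g-4+p},-1^p)$ you propose to start from (note that in genus one this requires $p\ge 2$, and the genus-zero strata need their own construction --- ``double covers of the pillow'' produce surfaces of positive genus, not genus-zero strata). What actually closes this direction is an explicit surface in every non-exceptional stratum, e.g.\ the one-cylinder Jenkins--Strebel representatives of Propositions \ref{pr:Q:0}, \ref{pr:Q:1}, \ref{pr:qd:g2} and \ref{pr:qd:g:ge3} of this paper, where one checks case by case that the length relation \eqref{eq:relation:top:bottom} still admits strictly positive solutions after the chosen contractions; some such verification must be supplied rather than asserted. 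Finally, your closing demand to ``confirm that the merging necessarily produces a degenerate surface in exactly the four exceptional cases'' is not needed: the algebro-geometric emptiness proofs are complete by themselves, so no statement about degeneration of mergings has to be proved for the theorem.
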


In particular, it follows from this theorem that any holomorphic quadratic differential without any zeroes on a surface of genus one, and any holomorphic quadratic differential with a single zero on a surface of genus two is a global square of a holomorphic 1-form.

\subsection*{Hyperelliptic connected components.}
Analogously to the case of Abelian differentials some strata of quadratic differentials contain \emph{hyperelliptic} connected components. They are defined as families of quadratic differentials on hyperelliptic Riemann surfaces invariant under hyperelliptic involution, having specified collections of zeroes and poles and specified action of the involution on the set of zeroes and poles.
They were described by E.~Lanneau in \cite{Lanneau:hyperelliptic}.

\begin{NNTheorem}[E. Lanneau]
Hyperelliptic connected components of strata of meromorphic quadratic differentials with at most simple poles are described by the following list, where $j, k$ are arbitrary nonnegative integer parameters.

\begin{itemize}
\item
Component $\cQ^{\mathit{hyp}}(2j-1,2j-1,2k-1,2k-1)$ of quadratic differentials on hyperelliptic Riemann surfaces, for which the quadratic differentials satisfy the additional requirement that the hyperelliptic involution interchanges the singularities corresponding to the pairs $2j-1,2j-1$ and $2k-1,2k-1$.
\item
Component $\cQ^{\mathit{hyp}}(2j-1,2j-1,4k+2)$ of quadratic differentials on hyperelliptic Riemann surfaces, for which the quadratic differentials satisfy the additional requirement that the hyperelliptic involution interchanges the singularities corresponding to the pair $2j-1,2j-1$.
\item
Component $\cQ^{\mathit{hyp}}(4j+2,4k+2)$ of quadratic differentials on hyperelliptic Riemann surfaces. When $j=k$, quadratic differentials in this component satisfy the additional requirement that both zeroes are fixed points of the hyperelliptic involution.
\end{itemize}
\end{NNTheorem}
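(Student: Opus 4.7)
The plan is to exploit the defining feature of a hyperelliptic surface $S$ of genus $g$: the quotient by the hyperelliptic involution $\tau$ is isomorphic to $\mathbb{P}^1$, and $\pi\colon S\to\mathbb{P}^1$ is a degree-$2$ cover branched over the $2g+2$ Weierstrass points. A $\tau$-invariant quadratic differential $q$ on $S$ descends to a meromorphic quadratic differential $\bar q$ on $\mathbb{P}^1$, and conversely any meromorphic quadratic differential on $\mathbb{P}^1$ pulls back to a $\tau$-invariant quadratic differential on $S$. Writing the cover locally as $w=z^2$ near a branch point, the identity $\pi^*(f(w)\,dw^2)=4z^2 f(z^2)\,dz^2$ shows that the order of $\pi^*\bar q$ at a branch point equals $2\cdot\mathrm{ord}(\bar q)+2$, while at a non-branch point the order of $\bar q$ is preserved and the two preimages carry equal-order zeros interchanged by $\tau$.

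Next I would enumerate the nonempty hyperelliptic loci by combinatorial type. Let $r$ denote the number of non-branch zeros of $\bar q$ and $s$ the number of branch zeros of $\bar q$. The order transformation rule above implies that the $r$ non-branch zeros produce $r$ pairs of equal-order zeros of $q$ swapped by $\tau$, while the $s$ branch zeros produce $s$ individual $\tau$-fixed zeros of $q$ whose orders are congruent to $2$ modulo $4$. Parameterizing the corresponding hyperelliptic locus by the $2g+2+r$ marked points on $\mathbb{P}^1$ modulo $\mathrm{PSL}(2,\mathbb{C})$, together with the nonzero scalar of $\bar q$, one computes its complex dimension as $r+2g$. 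The ambient stratum has complex dimension $2g+n+p-2=2g+(2r+s)-2$. These two dimensions coincide precisely when $r+s=2$, and the three possibilities $(r,s)=(2,0),(1,1),(0,2)$ correspond respectively to $\cQ^{\mathit{hyp}}(2j-1,2j-1,2k-1,2k-1)$, $\cQ^{\mathit{hyp}}(2j-1,2j-1,4k+2)$ and $\cQ^{\mathit{hyp}}(4j+2,4k+2)$. For any other value of $(r,s)$ the hyperelliptic locus has strictly smaller dimension than the ambient stratum and therefore cannot be a connected component.

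To conclude that each locus above is a single connected component I would verify: closedness, from the closedness of the hyperelliptic locus in $\mathcal{M}_g$; openness, from the dimension match just computed; and connectedness, from the irreducibility of the parameter space of tuples of distinct marked points on $\mathbb{P}^1$ modulo $\mathrm{PSL}(2,\mathbb{C})$ times a nonzero scalar. The extra prescription in the $j=k$ case of $\cQ^{\mathit{hyp}}(4j+2,4k+2)$ is needed because the two branch zeros then carry the same order and must be distinguished by hand from the hyperelliptic sublocus of the same stratum in which the two equal-order zeros are $\tau$-swapped; the latter corresponds to $(r,s)=(1,0)$ and fails to be a component by the dimension count.

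The main obstacle is the dimension match justifying openness. Strata that are globally squares of Abelian differentials obey a different dimension formula, so one must first check that the constructed $\bar q$ is not itself the square of a meromorphic $1$-form on $\mathbb{P}^1$. This is automatic whenever $\bar q$ has at least one simple pole or an odd-order zero, and the need to avoid this square case is what ultimately motivates the parity conditions $d=2j-1$ on the odd orders as well as the low-genus omissions implicit in the statement.
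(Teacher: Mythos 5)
There is no internal proof to compare against here: the paper simply quotes this classification from Lanneau, so your proposal should be measured against the standard argument (which is indeed the one you sketch: descend a $\tau$-invariant $q$ to $\mathbb{P}^1$, use $\mathrm{ord}(\pi^*\bar q)=2\,\mathrm{ord}(\bar q)+2$ at branch points, count dimensions, and conclude open $+$ closed $+$ connected). Your dimension count is correct and correctly isolates $r+s=2$. The genuine gap is in the step that produces the \emph{specific orders} in the list, which is the actual content of the theorem. First, your claim that the $s$ branch zeros automatically have orders congruent to $2$ modulo $4$ is false: a branch zero of order $m$ downstairs gives order $2m+2$ upstairs, which is $\equiv 2 \pmod 4$ only when $m$ is even, and nothing in the order-transformation rule forces $m$ even. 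Second, the exclusion you invoke is applied to the wrong object: what must be ruled out is that $\pi^*\bar q$ is a global square of an Abelian differential on $S$, not that $\bar q$ is a square on $\mathbb{P}^1$. The correct criterion is that $\pi^*\bar q$ is a square exactly when the branch locus of the hyperelliptic cover coincides with the set of odd-order singularities of $\bar q$ (so that $\pi$ is the orientation double cover of $\bar q$). Since in every case under consideration $\bar q$ has simple poles at many branch points, your test ``$\bar q$ has a simple pole or an odd zero'' is always satisfied and never excludes anything; as written, your argument would admit spurious ``hyperelliptic components'' such as loci with two $\tau$-fixed zeroes of orders $\equiv 0\pmod 4$ (odd $m$), or swapped pairs of even order, which in fact consist of squares of Abelian differentials and do not lie in the quadratic strata at all. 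A symptom of this is your own side remark about $(r,s)=(1,0)$: that locus has dimension $2g+1$, strictly larger than the dimension $2g$ of the quadratic stratum it would sit in, which is impossible for a sublocus and signals that it is the (squared) component $\cH^{\mathit{hyp}}(g-1,g-1)$ rather than a subset of $\cQ(2g-2,2g-2)$.

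To close the gap you need two additional ingredients, both elementary but essential: (i) a quadratic differential has an even number of odd-order singularities, so on $\mathbb{P}^1$ the parities of the orders at the $r$ non-branch points and the $s$ branch points are constrained once you know how many branch points carry simple poles; and (ii) the square criterion upstairs stated above. Combining these with $r+s=2$ forces exactly the three families: for $(r,s)=(2,0)$ the two non-branch orders must have equal parity and the even case is a square, leaving $(2j-1,2j-1,2k-1,2k-1)$; for $(1,1)$ exactly one of the two orders is odd and the ``even non-branch, odd branch'' case is a square, leaving $(2j-1,2j-1,4k+2)$; for $(0,2)$ both branch orders have equal parity and the odd case is a square, leaving $(4j+2,4k+2)$. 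With that correction, the remaining steps you outline (closedness of the invariant locus, openness via equal dimension and invariance of domain for the covering construction, connectedness of the configuration space of marked points on $\mathbb{P}^1$ times $\mathbb{C}^*$) do complete the argument along the same lines as Lanneau's proof.
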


\subsection{Classification of connected components
(after E. Lanneau)}
\label{ss:qd:Classification:of:connected:components}
The classification of connected components for quadratic differentials was
obtained by E.~Lanneau in \cite{L}.

\begin{NNTheorem}[E. Lanneau]
All connected components of any stratum  of meromorphic quadratic differentials with at most simple poles on a complex curve of genus $g\ge 3$ are described by the following list:

Each of the four exceptional strata
$$
\cQ(9,-1), \ \cQ(6,3,-1), \ \cQ(3,3,3,-1), \ \cQ(12)
$$
has exactly two connected components.

Each of the following strata
\begin{align*}
&\cQ(2j-1,2j-1,2k-1,2k-1) & j\ge 0,\ k\ge 0,\quad\ \ \, j+k=g\\
&\cQ(2j-1,2j-1,4k+2)      & j\ge 0,k\ge 0,\ j+k=g-1\\
&\cQ(4j+2,4k+2)           & j\ge 0,k\ge 0,\ j+k=g-2
\end{align*}
has exactly two connected components, precisely one   of  which is
hyperelliptic.

All  other strata of meromorphic quadratic differentials with at most simple poles on complex curves of genera $g\ge 3$ are nonempty and connected.
\end{NNTheorem}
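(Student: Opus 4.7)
The plan is to follow the two-part structure that worked for Abelian differentials in \cite{KZ03}: first, exhibit invariants separating the listed components, and then show that each stratum admits no additional components. The parity of spin structure used in the Abelian case is no longer directly available for quadratic differentials, so the invariants must be of a different nature.

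For the first part, the hyperelliptic components are cut out by the closed $\operatorname{SL}(2,\mathbb R)$-invariant condition that the underlying Riemann surface be hyperelliptic with the prescribed action of the involution on the zeroes and poles; this immediately isolates the hyperelliptic component in each of the three families listed. To separate the two components of each of the four exceptional strata $\cQ(9,-1)$, $\cQ(6,3,-1)$, $\cQ(3,3,3,-1)$, $\cQ(12)$, I would pass to the canonical orientation double cover $\hat S\to S$ on which the quadratic differential becomes the square of a holomorphic 1-form. The cover lives in a stratum of Abelian differentials and inherits its own discrete invariants (hyperellipticity of $\hat S$, or the parity of spin structure of the lifted Abelian differential); in precisely these four exceptional strata, one of these cover-invariants takes two values while every invariant of the base remains constant, producing two connected components without any hyperelliptic one.

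For the second part, the goal is to connect any two flat surfaces inside a single nonexceptional, nonhyperelliptic stratum. I would induct on the complexity (genus plus number of zeroes plus number of poles), using two elementary surgeries: shrinking a nondegenerate saddle connection between two distinct zeroes (merging them into one zero of summed degree), and its inverse, which breaks a zero into a pair. When poles are present, shrinking a zero-to-pole saddle connection reduces the number of singular points by two. Each such move lands in a smaller stratum, and the base cases are the nonempty low-complexity strata, where one checks connectedness directly — for example by enumerating extended Rauzy classes of cylindrical generalized permutations produced by the constructions in Section \ref{ss:generailzed:permutation} and Propositions \ref{pr:abelian:general}--\ref{pr:hyperelliptic:Jenkins:Strebel}, together with their forthcoming quadratic analogues. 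Along each move one must verify that the cover-invariants are preserved (so that different invariant values remain in different components) and that one does not accidentally drop into the hyperelliptic locus.

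The main obstacle is the control of the surgeries in the quadratic setting and the verification of the base cases. As noted in the Remark at the end of Section \ref{ss:idea:of:construction}, contraction of a saddle connection in a stratum of quadratic differentials can produce a degenerate surface whenever there are homologous saddle connections (this is automatic, for example, for the component $\cQ^{\mathit{irr}}(9,-1)$, which is in fact what forces $\cQ(9,-1)$ into the exceptional list). One must therefore choose the contracted saddle connection carefully, and treat low-complexity strata by hand — this is what pins down both the four exceptional strata and the borderline low-genus regime where the theorem for $g\le 2$ has to be stated separately.
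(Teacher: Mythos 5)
This theorem is not proved in the paper at all: it is quoted from Lanneau's work \cite{L}, and the only ingredient the paper itself supplies is Proposition \ref{pr:exceptional:strata}, where the fact that each of $\cQ(9,-1)$, $\cQ(6,3,-1)$, $\cQ(3,3,3,-1)$, $\cQ(12)$ has exactly two components is obtained by a brute-force enumeration of irreducible generalized permutations and of their extended Rauzy classes, not by any invariant. The genuine gap in your proposal is exactly at the point where you claim that invariants of the orientation double cover (hyperellipticity of $\hat S$, or the parity of the spin structure of the lifted Abelian differential) take two values on each exceptional stratum. They do not: the spin parity of the lift is determined by the singularity data of the quadratic differential alone (a theorem of Lanneau), hence is constant on every stratum and cannot separate components, and hyperellipticity of the cover fails for the same kind of reason. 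The paper says this explicitly in the Remark of Section \ref{ss:qd:Classification:of:connected:components}: no simple invariant distinguishing the components of the four exceptional strata is known; the separation rests on the Rauzy-class computation of Section \ref{ss:exceptional:strata} (e.g.\ classes of cardinalities $12\,366$ and $95\,944$ for $\cQ(9,-1)$). The geometric distinction discussed in the appendix --- one component is adjacent to the minimal stratum $\cQ(8)$ and admits multiplicity-one saddle connections from the pole to a zero, the other does not --- is derived \emph{after} the component count is known, so it cannot play the role your invariant is supposed to play.

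The second half of your plan (merging and breaking zeroes, induction on complexity, low-dimensional base cases) is the correct general skeleton --- it is the strategy of \cite{KZ03} that Lanneau adapts in \cite{L} --- but as written it does not deliver the ``exactly two components, one of them hyperelliptic'' statement for the families $\cQ(2j-1,2j-1,2k-1,2k-1)$, $\cQ(2j-1,2j-1,4k+2)$, $\cQ(4j+2,4k+2)$: you must still prove that the complement of the hyperelliptic locus in each of these strata is connected, and that every other stratum is connected, which is the substantive content of \cite{L} and is not reduced to base-case checks by the surgeries alone (the control of homologous saddle connections you mention is precisely where the argument is delicate). A small slip besides: merging a zero with a simple pole decreases the number of singular points by one, not two.
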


Analogous to the case of Abelian differentials some components are
missing in small genera.

\begin{NNTheorem}[M. Kontsevich; H. Masur and J. Smillie; E. Lanneau]
In genus $0$ any stratum is nonempty and connected.

In genus $1$ strata $\cQ(\emptyset)$ and $\cQ(1,-1)$ are
empty; all other strata are nonempty and connected.

In genus $2$ strata $\cQ(3,1)$ and $\cQ(4)$ are empty.
Strata $\cQ(6,-1^2)$ and $\cQ(3,3,-1^2)$ contain exactly two
components, one component is hyperelliptic the other one is not. Any other stratum of meromorphic quadratic differentials with at most simple poles on a Riemann surface of genus $2$ is nonempty and connected.
\end{NNTheorem}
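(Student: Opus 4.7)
My plan is to separate the statement into three parts: (i) emptiness of the four listed strata, (ii) connectedness of every other stratum in genera $0$ and $1$ together with the non-exceptional strata in genus $2$, and (iii) the two exceptional genus-$2$ strata $\cQ(6,-1^2)$ and $\cQ(3,3,-1^2)$. Part (i) is immediate from the Masur--Smillie theorem cited just above: the four empty strata $\cQ(\emptyset)$, $\cQ(1,-1)$, $\cQ(3,1)$, $\cQ(4)$ are exactly the ones listed there, and every other collection of data satisfying $d_1+\dots+d_\noz-p=4g-4$ with $g\in\{0,1,2\}$ is nonempty by the same theorem.

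For part (ii) I would first go through Lanneau's list of hyperelliptic components and verify case by case that no such component is a proper suborbifold of a non-exceptional low-genus stratum: either the stratum does not appear on his list at all, or, in genus $2$ (where every Riemann surface is automatically hyperelliptic), the extra Weierstrass-action condition in the definition of $\cQ^{\mathit{hyp}}$ is in fact satisfied on the whole stratum, as a direct comparison of dimensions shows. Connectedness of the remaining strata is then proved by direct parametrization. In genus $0$ the moduli space of curves is a point and a meromorphic quadratic differential with prescribed singularity profile is determined, up to $\mathrm{PGL}(2,\mathbb C)$ and an overall scale, by the positions of its zeros and poles on $\mathbb{CP}^1$; the corresponding configuration space is visibly connected. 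In genus $1$ one fibers the stratum over the connected moduli space of once-punctured elliptic curves and reduces to the same kind of configuration-space argument. In genus $2$ one argues either by the analogous fibration over the connected moduli of hyperelliptic curves, or via the extended Rauzy class approach used elsewhere in this paper, verifying by finite computation that each non-exceptional stratum yields a single extended Rauzy class of generalized permutations.

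The main work is part (iii). For each of $\cQ(6,-1^2)$ and $\cQ(3,3,-1^2)$ the existence of a hyperelliptic component is provided by Lanneau's theorem cited above. To exhibit a second, nonhyperelliptic component I would construct an explicit Jenkins--Strebel differential with a single cylinder whose cyclic pattern \eqref{eq:js:prepermutation} violates the palindromic symmetry characteristic of hyperellipticity (compare Proposition~\ref{pr:hyperelliptic:Jenkins:Strebel}, adapted to generalized permutations), and then verify that no isometry of the resulting flat surface realizes the hyperelliptic involution with the action on the two simple poles (respectively on the pairs of zeros and poles) demanded by Lanneau's definition of $\cQ^{\mathit{hyp}}$. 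The hard step, and the place where one genuinely relies on Lanneau's detailed analysis in \cite{L}, is showing that the nonhyperelliptic locus in each of these two strata is itself connected: I would handle this by realizing the genus-$2$ curve as a double cover of $\mathbb{CP}^1$ branched at six Weierstrass points and checking that the remaining discrete combinatorial data---incidences of zeros and poles with Weierstrass points and the combinatorial type of their matching under the involution---admit only a single value outside the hyperelliptic stratum, so that a single connected complement is forced.
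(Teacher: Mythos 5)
First, a framing point: the paper does not actually prove this theorem. It is stated as a quoted classification, with the genus-$0$ part attributed to Kontsevich \cite{Kontsevich}, the emptiness statements to Masur and Smillie \cite{MS}, and the genus-$1$ and genus-$2$ assertions (connectedness, and ``exactly two components'' for the two exceptional strata) to Lanneau \cite{L}. Judged as a free-standing argument, your sketch has two genuine gaps. The first is the dimension count in part (ii): equality of dimensions cannot show that the Weierstrass-action condition defining $\cQ^{\mathit{hyp}}$ holds on an entire genus-$2$ stratum, because the same equality holds in exactly the two cases where the conclusion fails. Indeed, $\cQ^{\mathit{hyp}}(6,-1,-1)$ is obtained by the double-cover construction from $\cQ(2,-1^6)$ on the sphere and has complex dimension $5=\dim\cQ(6,-1^2)$, and $\cQ^{\mathit{hyp}}(3,3,-1,-1)$ comes from $\cQ(3,-1^7)$ and has dimension $6=\dim\cQ(3,3,-1^2)$; yet in both cases the hyperelliptic locus is a proper connected component. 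A dimension comparison can at best show that the hyperelliptic locus is a union of components; whether its complement is empty is precisely the content of the theorem and needs a different argument.

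The second, more serious gap is part (iii) together with the genus-$1$ and genus-$2$ connectedness claims. Exhibiting a single-cylinder Jenkins--Strebel differential whose cyclic pattern violates the symmetry of Proposition \ref{pr:hyperelliptic:JS:implies:symmetry} (this is what Proposition \ref{pr:nonhyperelliptic} does) only proves that the nonhyperelliptic locus is nonempty; the theorem also asserts that it is connected, i.e., that there are \emph{exactly} two components. Your closing argument via the double cover of $\mathbb{CP}^1$ branched at the six Weierstrass points cannot reach this: a quadratic differential lying outside the hyperelliptic component is not anti-invariant under the hyperelliptic involution of its underlying curve, so it does not descend to the sphere and there is no finite list of incidence data to enumerate. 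Similarly, your genus-$1$ fibration skips the constraint that the singularity divisor be principal (linearly equivalent to $2K=0$), and connectedness there is not ``visible'' either. What would genuinely close these gaps is either Lanneau's analysis in \cite{L} (which you invoke, and which is exactly the citation the paper itself makes) or a finite computation of extended Rauzy classes in the spirit of Proposition \ref{pr:exceptional:strata} and the tables of Appendix \ref{a:tables}; the double-cover counting you sketch is not a substitute.
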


The part of the above theorem concerning genus $0$ is due to M. Kontsevich, see \cite{Kontsevich}. The results concerning empty strata is due to H. Masur and J. Smillie, see \cite{MS}. The remaining part of the theorem is due to E. Lanneau, \cite{L}.

\begin{NNRemark}
A simple invariant distinguishing components of the four exceptional strata
$$
\cQ(9,-1), \ \cQ(6,3,-1), \ \cQ(3,3,3,-1), \ \cQ(12)
$$
has not been found yet. The fact that each of these strata contains exactly two connected components is proved by an explicit computation of corresponding extended Rauzy classes, see Section \ref{ss:exceptional:strata} where we also discuss some geometric properties distinguishing the corresponding pairs of connected components.
\end{NNRemark}

\begin{Convention}
\label{conv::qd:zeroes:at:the:end}
Saying that \emph{a cylindrical generalized permutation $\pi$ represents a stratum $\cQ(d_1,\dots,d_\noz, -1^p)$} we always assume throughout this paper that the corresponding suspension
as in Figure \ref{fig:generalized:iet} has a singularity of degree $d_1$ at the left endpoint of\/ $X$. We do not control anymore the degree at the right endpoint of\/ $X$ (as it was done for Abelian differentials).
\end{Convention}

\subsection{Representatives of strata in genus $0$}

Let $d_1,\dots,d_\noz$ be an arbitrary (possibly empty) collection of strictly positive integers, and let $p= d_1+\dots+d_\noz+4$.

\begin{Proposition}
\label{pr:Q:0}
A generalized permutation obtained by erasing symbols
\begin{multline*}
\underbrace{3,5,\dots,2d_1-1}_{d_1-1},\quad \underbrace{2d_1+3,\dots,2(d_1+d_2)-1}_{d_2-1},\quad  \underbrace{\dots \dots}_{\dots}\ ,
\\
\underbrace{\dots \dots}_{\dots}\ ,\quad
\underbrace{2(d_1+\dots+d_{\noz-1})+3,\dots,2(d_1+\dots+d_\noz)-1}_{d_\noz-1}
\end{multline*}
in the generalized permutation
$$
\begin{pmatrix}
0,\ 2p-6,\ 2p-6\\
2,2,3,\quad 4,4,5,\quad \dots,\quad 2p-8,\ 2p-8,\ 2p-7,\qquad 2p-7,\ 2p-9,\ \dots,\ 3,1,\quad 1, 0\\
\end{pmatrix}
$$
represents the stratum $\cQ(d_1,\dots,d_\noz,-1^p)$.
\end{Proposition}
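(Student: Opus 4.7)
The strategy exactly parallels the arguments of Section~\ref{ss:idea:of:construction} and of Proposition~\ref{pr:abelian:general}: exhibit a single base differential $q_0$ whose cylinder representation is the full (unerased) cylindrical permutation, locate a distinguished chain of saddle connections running through all of its simple zeros, and obtain every target stratum by contracting prescribed initial segments of that chain. To construct $q_0$, I would first read off the ribbon graph encoded by the large permutation. In the bottom line each adjacent double $(2k,2k)$ for $k=1,\dots,p-4$, together with the doubles $(2p-7,2p-7)$ and $(1,1)$, represents a saddle connection that folds back at a cone point of angle $\pi$, i.e., a simple pole; on top the double $(2p-6,2p-6)$ does the same, accounting for all $p$ poles. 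The remaining odd symbols $3,5,\dots,2p-9$ occur at non-adjacent positions and each joins two distinct cone points, producing $p-4$ vertices of cone angle $3\pi$, i.e., zeros of degree one. A Gauss--Bonnet check on the resulting flat surface then gives $\chi=(p-(p-4))/2=2$, so the surface is a sphere and $q_0\in\cQ(\underbrace{1,\dots,1}_{p-4},-1^p)$.

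Next, label the zeros $P_1,\dots,P_{p-4}$ so that the odd-labelled saddle connections form an embedded chain
\[
P_1\xrightarrow{X_3}P_2\xrightarrow{X_5}P_3\xrightarrow{X_7}\cdots\xrightarrow{X_{2p-9}}P_{p-4}.
\]
Because $q_0$ has a single cylinder and each $X_{2j+1}$ joins two \emph{distinct} zeros, none of these saddle connections is homologous to any other saddle connection of $q_0$ (see the Remark at the end of Section~\ref{ss:idea:of:construction}), so each may be contracted independently without producing a degenerate surface. Contracting $X_3,X_5,\dots,X_{2d_1-1}$ fuses $P_1,\dots,P_{d_1}$ into one zero of degree $d_1$; contracting the next block fuses the next $d_2$ zeros into a single zero of degree $d_2$; and so on, leaving the single-cylinder structure and the number of simple poles intact. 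The resulting Jenkins--Strebel differential therefore lies in $\cQ(d_1,\dots,d_\noz,-1^p)$.

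Since contracting a horizontal saddle connection in a single-cylinder Jenkins--Strebel representation amounts combinatorially to erasing its symbol from both of its occurrences in the cylindrical generalized permutation (the cylindrical structure is preserved because the cut symbol $0$ is never touched), performing this erasure for precisely the symbols listed in the statement yields the advertised permutation. The main obstacle is the first step: a clean ribbon-graph picture in the style of Figure~\ref{fig:Jenkins:Strebel:H1111} is needed to verify that each adjacent double genuinely encodes a simple pole rather than some other singularity, and that the complement of the skeleton really is a single cylinder. Once this is settled, the remaining combinatorial bookkeeping is identical in spirit to the Abelian case treated in Section~\ref{ss:idea:of:construction}.
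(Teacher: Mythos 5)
Your overall strategy---realize the unerased permutation as a one-cylinder Jenkins--Strebel differential in $\cQ(1^{p-4},-1^p)$, locate a chain of horizontal saddle connections through all the simple zeroes, and contract prescribed blocks of that chain---is exactly the method the paper uses for the neighbouring propositions (the paper leaves this particular proof as an exercise). However, the verification of the base case, which you yourself flag as the main obstacle, is not correct as written. You list $p-1$ adjacent doubles (the $p-4$ doubles $(2k,2k)$, plus $(2p-7,2p-7)$ and $(1,1)$ in the bottom line, and $(2p-6,2p-6)$ in the top line) and assert that, each yielding one simple pole, they ``account for all $p$ poles''; at one pole apiece this gives only $p-1$ poles, and Gauss--Bonnet would then fail. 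The missing pole comes from the top boundary component of the cylinder: it consists of exactly the two copies of $2p-6$, so the folding identification fixes \emph{both} vertices of that boundary circle and produces \emph{two} cone points of angle $\pi$ there (this is precisely what happens on each boundary of the pillowcase example $p=4$, where the permutation reduces to $0,2,2$ over $1,1,0$). Likewise, the simple zeroes are not ``produced'' by the $p-5$ non-adjacent odd symbols $3,\dots,2p-9$ alone: one must check that the $3(p-4)$ boundary vertices other than the fold points fall into exactly $p-4$ identification classes of three, each of total cone angle $3\pi$, and that the arcs $X_3,X_5,\dots,X_{2p-9}$ join consecutive classes, giving the embedded chain $P_1-\dots-P_{p-4}$. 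This bookkeeping is elementary but it is the actual content of the proposition for the maximal stratum, and your write-up leaves it unestablished.

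Your appeal to non-homologousness is also unsupported: the Remark at the end of Section \ref{ss:idea:of:construction} states that horizontal saddle connections of a one-cylinder differential are never homologous only in the \emph{Abelian} case, and explicitly warns that for quadratic differentials homologous saddle connections do occur. Fortunately you do not need that claim: argue as in Propositions \ref{pr:qd:g2} and \ref{pr:qd:g:ge3}. Relation \eqref{eq:relation:top:bottom} here reads $|X_{2p-6}|=|X_1|+|X_2|+\dots+|X_{2p-7}|$; the erased symbols all lie among $3,5,\dots,2p-9$, while $1$, $2p-7$ and all even symbols survive, so after erasure the relation still admits strictly positive solutions, and the erased cylindrical permutation is directly realized by a nondegenerate one-cylinder Jenkins--Strebel differential whose zero of degree $d_i$ arises from merging the corresponding block of simple zeroes along the chain. (Note that, unlike in Proposition \ref{pr:qd:g:ge3}, the contracted saddle connections here \emph{do} enter the relation, so one should say explicitly why positivity persists---namely by shrinking $|X_{2p-6}|$ accordingly.)
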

For an empty collection $\{d_1,\dots,d_\noz\}=\emptyset$ one has $p=4$ and the permutation above should be read as
$$
\begin{pmatrix}
0,\ 2,\ 2\\
1,\ 1,\ 0
\end{pmatrix}.
$$
\begin{proof}
The proof is left to the reader as an exercise.
\end{proof}

\subsection{Representatives of strata in genus 1}

\begin{Proposition}
\label{pr:Q:1}
Let $d_1,\dots,d_\noz$ be a collection of strictly positive integers, such that $d_1+\dots+d_\noz=p\ge 2$. A generalized permutation obtained by erasing symbols
\begin{multline*}
\underbrace{2,4,\dots,2(d_1-1)}_{d_1-1},\quad \underbrace{2(d_1+1),\dots,2(d_1+d_2-1)}_{d_2-1},\quad  \underbrace{\dots \dots}_{\dots}\ ,
\\
\underbrace{\dots \dots}_{\dots}\ ,\quad
\underbrace{2(d_1+\dots+d_{\noz-1}+1),\dots,2(d_1+\dots+d_\noz-1)}_{d_\noz-1}
\end{multline*}
in the generalized permutation
$$
\begin{pmatrix}
0,1,\quad 2,3,3,\quad 4,5,5,\quad \dots,\quad 2p-2,\; 2p-1,\; 2p-1\\
2,4,\dots,2p-2,\quad 1,\; 2p,\; 2p,\quad 0
\end{pmatrix}
$$
represents the stratum $\cQ(d_1,\dots,d_\noz,-1^p)$.
\end{Proposition}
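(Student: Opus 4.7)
The plan is to follow the template of Propositions \ref{pr:abelian:general}, \ref{pr:Jenkins:Strebel:H2222:odd}, and \ref{pr:Jenkins:Strebel:H2222:even}: first exhibit a Jenkins--Strebel differential with a single cylinder in a principal-like genus-one stratum whose cylinder representation is the given unerased generalized permutation, and then contract an embedded chain of saddle connections to merge simple zeroes into zeroes of prescribed degree.

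My first step would be to verify that the permutation
$$
\begin{pmatrix}
0,1,\; 2,3,3,\; 4,5,5,\; \dots,\; 2p-2,\,2p-1,\,2p-1\\
2,4,\dots,2p-2,\; 1,\,2p,\,2p,\; 0
\end{pmatrix}
$$
represents the principal stratum $\cQ(1^p,-1^p)$ in genus one (which is nonempty since $p\ge 2$). Geometrically, each adjacent same-line pair $(2k+1,2k+1)$ on the upper line for $k=1,\dots,p-1$ and the pair $(2p,2p)$ on the lower line is a fold of the cylinder boundary and produces a simple pole of cone angle $\pi$; the vertices lying between consecutive honest two-sided saddle connections $X_1,X_2,X_4,\dots,X_{2p-2}$ are the $p$ simple zeroes of cone angle $3\pi$. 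The genus is pinned down by an Euler-characteristic count of the underlying ribbon graph: $V=2p$ vertices, $E=2p$ non-diagonal edges, and $F=1$ cylinder face give $V-E+F=0=2-2g$, so $g=1$.

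Next I would identify the embedded chain
$$
\begin{CD}
P_1 @>X_2>> P_2 @>X_4>> \cdots @>X_{2(p-1)}>> P_p
\end{CD}
$$
of $p-1$ saddle connections linking all $p$ simple zeroes $P_1,\dots,P_p$. This is transparent from the upper line, where the even-labeled honest edges $X_2,X_4,\dots,X_{2(p-1)}$ appear in order, separated only by pole pairs $(2k+1,2k+1)$. Since horizontal saddle connections of a Jenkins--Strebel differential with a single cylinder are pairwise non-homologous (cf. the remark at the end of Section \ref{ss:idea:of:construction}), any subset of this chain may be simultaneously contracted without producing a degenerate surface. Contracting $X_2,X_4,\dots,X_{2(d_1-1)}$ merges $P_1,\dots,P_{d_1}$ into a single zero of degree $d_1$; contracting $X_{2(d_1+1)},\dots,X_{2(d_1+d_2-1)}$ merges the next block of $d_2$ simple zeroes into a zero of degree $d_2$; and so on. The resulting Jenkins--Strebel differential lies in $\cQ(d_1,\dots,d_\noz,-1^p)$, and by construction its cylinder representation is obtained from the original permutation by erasing exactly the listed symbols.

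The main obstacle is the verification at the very beginning: the direct combinatorial check that the unerased permutation really represents $\cQ(1^p,-1^p)$ rather than some other genus-one stratum. One must trace the horizontal foliation along both cyclically ordered boundary components of the cylinder carefully enough to confirm the cone-angle value at each of the $2p$ vertices, with particular attention to the two endpoints of the cut $X_0$ where contributions from the upper and lower lines meet, and to exclude any fake $2\pi$-cone points. Once that base case is settled, the merging step mirrors the Abelian argument in Proposition \ref{pr:abelian:general} almost verbatim, with the simple poles simply carried along as inert passengers under the contractions.
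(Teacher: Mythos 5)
Your construction coincides with the paper's (the same single-cylinder Jenkins--Strebel differential in $\cQ(1^p,-1^p)$ in genus one, and the same embedded chain $X_2,X_4,\dots,X_{2p-2}$ of contracted saddle connections), but the step where you justify that these contractions are legitimate contains a genuine gap. You claim that ``horizontal saddle connections of a Jenkins--Strebel differential with a single cylinder are pairwise non-homologous'', citing the remark at the end of Section \ref{ss:idea:of:construction}; that remark asserts this only for \emph{Abelian} differentials and explicitly says that for quadratic differentials homologous horizontal saddle connections do occur. They occur in this very example: because the two boundary circles of the cylinder have equal length, the lengths must satisfy relation \eqref{eq:relation:top:bottom}, which here reads $|X_3|+|X_5|+\dots+|X_{2p-1}|=|X_{2p}|$, so the saddle connections $X_3,X_5,\dots,X_{2p-1},X_{2p}$ cannot be deformed independently (in particular $X_{2p}$ can never be contracted by itself). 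So some argument specific to the chosen edges is genuinely needed, and your blanket non-homologousness claim does not supply it. The correct and very short argument --- the one the paper uses --- is that none of the distinguished saddle connections $X_2,X_4,\dots,X_{2p-2}$ enters this relation, so contracting any subcollection of them leaves a strictly positive admissible solution for the remaining lengths, hence a nondegenerate single-cylinder Jenkins--Strebel differential in $\cQ(d_1,\dots,d_\noz,-1^p)$. This is also where the hypothesis $p\ge 2$ is really used: it makes the left-hand side of the relation nonempty, so the initial suspension exists with all lengths positive (for $p=1$ the relation would force $|X_{2p}|=0$, consistently with the emptiness of $\cQ(1,-1)$). You never mention the length relation, which is exactly the bookkeeping that distinguishes generalized permutations from true ones.

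A smaller point: your Euler-characteristic check is off as written. With $V=2p$ singular points, $E=2p$ horizontal saddle connections and the single cylinder counted as a face, $V-E+F=1$, not $0$; the cylinder is an annulus, so either count its contribution to $\chi$ as $0$, or cut along $X_0$ and take $E=2p+1$, $F=1$. The conclusion $g=1$ is of course correct, and your identification of the $p$ simple poles at the folds and the $p$ simple zeroes in between agrees with the surface depicted in Figure \ref{fig:Jenkins:Strebel:quadratic:g1}.
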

\begin{figure}[htb]
\includegraphics{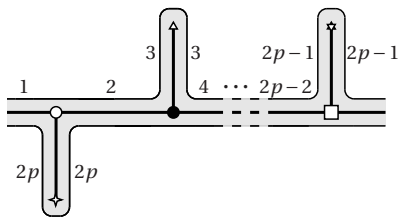}
\begin{picture}(0,0)(-15,-107)
\put(-82,-140){\scriptsize $1$}
\put(-49,-140){\scriptsize $2$}
\put(-34,-127){\scriptsize $3$}
\put(-17,-127){\scriptsize $3$}
\put(-14,-140){\scriptsize $4$}
\put(-5,-138){$\dots$}
\put(9,-140){\scriptsize $2p-2$}
\put(10,-127){\scriptsize $2p-1$}
\put(42,-127){\scriptsize $2p-1$}

\put(-83,-172){\scriptsize $2p$}
\put(-61,-172){\scriptsize $2p$}
\end{picture}
\vspace{80bp}
\caption{
\label{fig:Jenkins:Strebel:quadratic:g1}
A Jenkins--Strebel differential with a single cylinder in the stratum $\cQ(1^{p},-1^p)$,  $p\ge 2$, in genus $1$
}
\end{figure}
\begin{proof}
Note that by the theorem of H. Masur and J. Smillie cited in the
beginning of Section \ref{s:quadratic} the strata $\cQ(\emptyset)$ and $\cQ(1,-1)$ are empty. Thus, a meromorphic quadratic differential with simple poles in genus $g=1$ has at least $p=2$ poles. This implies that the relation
$$
|X_3|+\dots+|X_{2p-1}|=|X_{2p}|
$$
for the lengths of horizontal saddle connections of a Jenkins--Strebel differential such as in Figure \ref{fig:Jenkins:Strebel:quadratic:g1} always has a strictly positive solution. The resulting quadratic differential belongs to the stratum $\cQ(1^p,-1^p)$.

Note that none of the distinguished saddle connections $X_2, X_4, \dots, X_{2p-2}$ is involved in the relation above. It means that contracting any subcollection of these distinguished saddle connections does not affect the lengths of any other saddle connection.

It remains to note that an embedded chain of\/ $p-1$ distinguished saddle connections
$$
\begin{CD}
\quad@>X_2>> \quad @>X_4>> \quad @>X_{2p-2}>> \quad
\end{CD}
\begin{picture}(0,0)(150,0)

\thicklines
\put(0,0){\circle{6}}
\put(46,0){\circle*{7}}
\put(93,-3){$\dots$\quad}
\put(140,-4){\framebox(5,5){\quad}}
\end{picture}
$$
joins all $p$ zeroes of our quadratic differential.
\end{proof}

\subsection{Representatives of connected strata in genus $2$}

\begin{Proposition}
\label{pr:qd:g2}
The generalized permutation
$$
\begin{pmatrix}
0,6,1,5,6,4,3\\
1,2,3,4,2,5,0\\
\end{pmatrix}
$$
represents the stratum $\cQ(1,1,1,1)$.
The generalized permutations obtained from this one by erasing symbols $\{1\}$ and $\{1,3\}$ represent the strata $\cQ(2,1,1)$ and $\cQ(2,2)$, respectively.

Let $d_1,\dots,d_\noz,p$ be a collection of strictly positive integers satisfying the relation $d_1+\dots+d_\noz=p+4$. A generalized permutation obtained by erasing symbols
\begin{multline*}
\underbrace{1,2,\dots,d_1-1}_{d_1-1},\quad \underbrace{d_1+1,\dots,d_1+d_2-1}_{d_2-1},\quad  \underbrace{\dots \dots}_{\dots}\ ,
\\
\underbrace{\dots \dots}_{\dots}\ ,\quad
\underbrace{d_1+\dots+d_{\noz-1}+1,\dots,d_1+\dots+d_\noz-1}_{d_\noz-1}
\end{multline*}
in the generalized permutation
$$
\begin{pmatrix}
0,\ 2p+6,\ 1,\ 2p+5,\ 2p+6,\ p+4,\ p+3\\
1,\dots,p+4,\ \ p+2,p+5,p+5,\ \ p+1,p+6,p+6,\dots,3,2p+4,2p+4,\ \ 2,2p+5, 0
\end{pmatrix}
$$
represents the stratum $\cQ(d_1,\dots,d_\noz,-1^p)$.
\end{Proposition}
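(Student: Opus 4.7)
The plan is to follow the template used in the proofs of Propositions \ref{pr:abelian:general}, \ref{pr:Jenkins:Strebel:H2222:odd} and \ref{pr:Q:0}--\ref{pr:Q:1}: I would realize the displayed cylindrical generalized permutation as the cylinder encoding of a Jenkins--Strebel differential with a single cylinder in the base stratum $\cQ(1^{p+4},-1^p)$ on a genus-two surface, identify an embedded chain of horizontal saddle connections joining all $p+4$ simple zeroes, and then shrink the saddle connections in prescribed blocks to merge consecutive simple zeroes into zeroes of the desired degrees $d_1,\dots,d_\noz$.

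The first step is to check that the displayed permutation is cylindrical and compatible with the length relation \eqref{eq:relation:top:bottom}. In both the $p=0$ and the general $p\ge 1$ displays, comparing the symbols that appear only on the top line with those appearing only on the bottom line should reduce to a single positive linear equation admitting an open cone of strictly positive solutions; any such solution yields a bona fide Jenkins--Strebel differential with a single cylinder. I would then pass to the ribbon-graph picture as in Section \ref{ss:idea:of:construction} and compute the cyclic boundary cycles at every vertex of the skeleton, confirming that the base surface has exactly $p+4$ simple zeroes and $p$ simple poles, consistent with the genus-two relation $(p+4)-p=4=4g-4$; hence the base stratum is $\cQ(1^{p+4},-1^p)$.

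The second step is to exhibit an embedded chain $X_{i_1}\to X_{i_2}\to\dots\to X_{i_{p+3}}$ of horizontal saddle connections visiting all $p+4$ simple zeroes in order, with indices $i_j$ chosen to be precisely the ``distinguished'' ones appearing in the erasure list of the statement. As in the proof of Proposition \ref{pr:abelian:general}, erasing a block of $d_i-1$ consecutive indices from the cylinder encoding contracts the corresponding consecutive chain edges and merges $d_i$ simple zeroes into a single zero of degree $d_i$. Iterating over $i=1,\dots,\noz$ would produce a Jenkins--Strebel differential in $\cQ(d_1,\dots,d_\noz,-1^p)$ whose cylinder encoding is obtained from the base one by deleting exactly the listed symbols. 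The three strata $\cQ(1,1,1,1)$, $\cQ(2,1,1)$, $\cQ(2,2)$ arise as the $p=0$ specializations with erasure sets $\emptyset$, $\{1\}$, $\{1,3\}$.

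The hard part will be the nondegeneracy of the contractions: as recalled in the Remark closing Section \ref{ss:idea:of:construction} and in \cite{MZ}, shrinking a horizontal saddle connection of a quadratic Jenkins--Strebel differential can collapse the surface if another horizontal saddle connection is homologous to it. I would dispose of this by reading the $H_1$ classes of the chain edges directly off the one-cylinder ribbon graph and checking pairwise nonhomology, with particular attention to the two symbols appearing twice on the same line which encode the nontrivial $\ZZ$-holonomy and are the natural candidates for such coincidences. A secondary point is that when the target stratum is disconnected---which in the range covered by this proposition occurs only for $\cQ(6,-1^2)$ and $\cQ(3,3,-1^2)$---the construction lands in one specific connected component, to be identified by direct inspection of the cylinder pattern; the complementary components of these two strata would be treated separately in subsequent sections of the paper.
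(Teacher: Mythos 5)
Your outline follows essentially the same route as the paper: realize the displayed permutation as the cylinder encoding of a one-cylinder Jenkins--Strebel differential in $\cQ(1^{p+4},-1^p)$ in genus two, observe that the distinguished saddle connections $X_1,\dots,X_{p+3}$ form a chain visiting all $p+4$ simple zeroes, and contract them in blocks to merge the zeroes to the prescribed degrees; your remark that identifying the connected component (relevant only for $\cQ(6,-1^2)$ and $\cQ(3,3,-1^2)$) is deferred is also exactly what the paper does, in Proposition \ref{pr:nonhyperelliptic} via Lanneau's criterion (Proposition \ref{pr:hyperelliptic:JS:implies:symmetry}). The one real divergence is your ``hard part''. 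The paper never checks nonhomology of saddle connections: it simply notes that the unique length relation \eqref{eq:relation:top:bottom} reads $|X_6|=|X_2|$ when $p=0$ and $|X_{2p+6}|=\bigl(|X_2|+\dots+|X_{p+2}|\bigr)+\bigl(|X_{p+5}|+\dots+|X_{2p+4}|\bigr)$ in general, so none of the distinguished symbols $1,\dots,p+3$ occurs in it; hence any subcollection of $X_1,\dots,X_{p+3}$ can be contracted while all remaining lengths stay strictly positive and satisfy the relation, and the limit is manifestly again a closed one-cylinder surface, i.e.\ nondegenerate. This is both simpler and complete. Your proposed substitute is, as stated, slightly off target: for quadratic differentials the obstruction to contracting a saddle connection is the existence of \^{h}omologous saddle connections in the sense of \cite{MZ} (homology on the orienting double cover), not homology classes in $H_1(S)$, and the contracted edge would have to be compared with all horizontal saddle connections, not only the other chain edges. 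That route could be made to work, but inside the one-cylinder family the length-relation observation makes it unnecessary. Finally, the paper also records (via Masur--Smillie) that $\cQ(3,1)$ and $\cQ(4)$ are empty, which explains why only the erasure sets $\{1\}$ and $\{1,3\}$ appear in the $p=0$ case; this is contextual rather than logically required for the statement.
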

\begin{figure}[htb]
\includegraphics{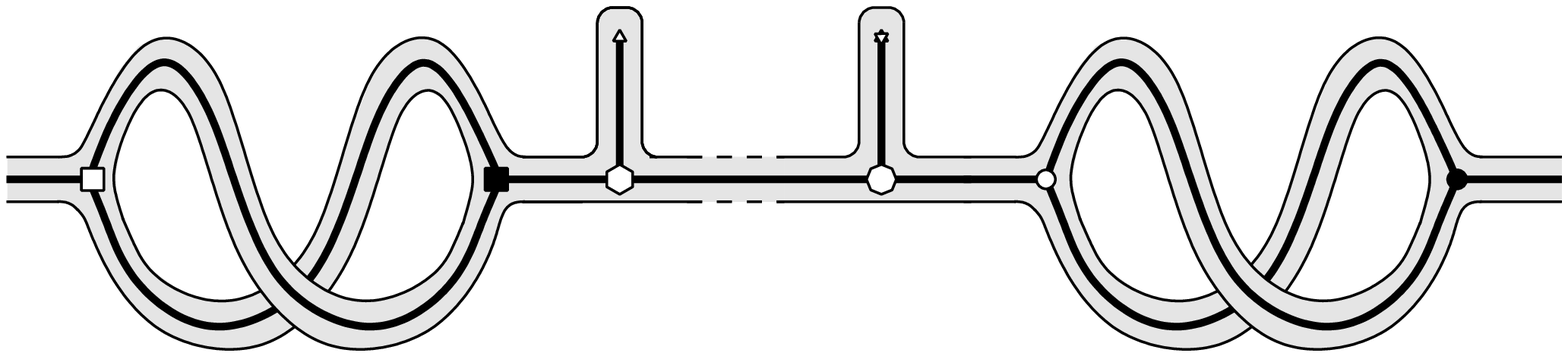}
\includegraphics{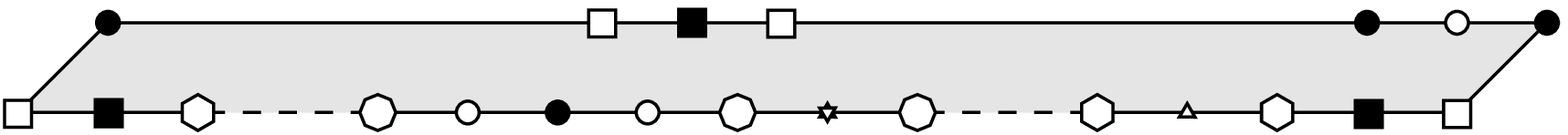}
\begin{picture}(0,0)(94,-102)
\put(-73,-139){\scriptsize $2\!p\!\!+\!\!6$}
\put(-37,-148){\scriptsize $1$}
\put(-35,-146){\circle{10}}
\put(2,-113){\scriptsize $2p+5$}
\put(154,-113){\scriptsize $p+4$}
\put(136,-178){\scriptsize $p+3$}
\put(144,-176){\oval(20,9)}
\put(236,-139){\scriptsize $2\!p\!\!+\!\!6$}
\put(40,-159){\scriptsize $2$}
\put(42,-157){\circle{10}}
\put(64,-159){\scriptsize $3$}
\put(66,-157){\circle{10}}
\put(75,-158){$\dots$}
\put(93,-159){\scriptsize $p\!+\!1$}
\put(101,-157){\oval(20,9)}
\put(120,-159){\scriptsize $p\!+\!2$}
\put(128,-157){\oval(20,9)}
\put(117,-126){\scriptsize $p\!+\!5$}
\put(89,-125){$\dots$}
\put(62,-126){\scriptsize $2p\!+\!4$}
\end{picture}

\begin{picture}(0,0)(0,-13)
\begin{picture}(0,0)(-40,-13)
\put(-139,-115){\tiny $2p\!+\!6$}
\put(-70,-115){\scriptsize $1$}
\put(-54.5,-115){\tiny $2\!p\!\!+\!\!5$}
\put(20,-115){\tiny $2p\!+\!6$}
\put(95,-115){\tiny $p\!+\!4$}
\put(115,-115){\tiny $p\!+\!3$}
\end{picture}
\begin{picture}(0,0)(44,-20)
\put(-117,-153){\scriptsize $1$}
\put(-98,-153){\scriptsize $2$}
\put(-81,-153){\scriptsize $3$}
\put(-72,-152){\scriptsize $...$}
\put(-63,-153){\tiny $p\!+\!1$}
\put(-42,-153){\tiny $p\!+\!2$}
\put(-22,-153){\tiny $p\!+\!3$}
\put(-2,-153){\tiny $p\!+\!4$}
\put(16,-153){\tiny $p\!+\!2$}
\put(37,-153){\tiny $p\!+\!5$}
\put(56,-153){\tiny $p\!+\!5$}
\put(76,-153){\tiny $p\!+\!1$}
\put(93,-152){\scriptsize $...$}
\put(105,-153){\scriptsize $3$}
\put(116.5,-153){\tiny $2\!p\!\!+\!\!4$}
\put(135,-153){\tiny $2\!p\!\!+\!\!4$}
\put(162,-153){\scriptsize $2$}
\put(176,-153){\tiny $2\!p\!\!+\!\!5$}
\put(-121,-136){\scriptsize $0$}
\put(205,-140){\scriptsize $0$}
\end{picture}
\end{picture}
\vspace{140bp}
\caption{
\label{fig:Jenkins:Strebel:Q:genus:2}
A Jenkins--Strebel differential with a single cylinder in the stratum $\cQ(1^{p+4},-1^p)$, $p\ge 0$, and its cylinder representation. Distinguished symbols $1,\dots,p+3$ enumerate saddle connections chosen for contraction.
}
\end{figure}
\begin{proof}
It follows from Figure \ref{fig:Jenkins:Strebel:Q:genus:2} that the generalized permutation
$$
\begin{pmatrix}
0,6,1,5,6,4,3\\
1,2,3,4,2,5,0\\
\end{pmatrix}
$$
represents the stratum $\cQ(1,1,1,1)$.
The basic relation \eqref{eq:relation:top:bottom} between the lengths of horizontal saddle connections has the form $|X_6|=|X_2|$. Thus, we may contract any of\/ $X_1, X_3$ or both of them without changing the lengths of other saddle connections. From Figure \ref{fig:Jenkins:Strebel:Q:genus:2} we conclude that in this way we get generalized permutations representing strata  $\cQ(2,1,1)$ and $\cQ(2,2)$ correspondingly. Note that by the theorem of H. Masur and J. Smillie strata $\cQ(3,1)$ and $\cQ(4)$ are empty.
Hence we have represented all strata of \emph{holomorphic} quadratic differentials in genus $g=2$.

Consider now a Jenkins--Strebel quadratic differential with a single cylinder as in Figure \ref{fig:Jenkins:Strebel:Q:genus:2} having at least one simple pole. Relation \eqref{eq:relation:top:bottom} between the lengths of horizontal saddle connections now has the form
$$
|X_{2p+6}|=\left(|X_2|+|X_3|+\dots+|X_{p+2}|\right)\ +\ \left(|X_{p+5}|+\dots+|X_{2p+4}|\right)
$$
and always admits strictly positive solutions. It is clear from Figure \ref{fig:Jenkins:Strebel:Q:genus:2} that the corresponding Jenkins--Strebel quadratic differential has a single cylinder and belongs to the stratum $\cQ(1^{p+4},-1^p)$. It remains to note that the $p+3$ saddle connections $X_1, \dots, X_{p+3}$ join all $p+4$ simple zeroes and that contracting any subcollection of these distinguished saddle connections yields a relation which still admits a strictly positive solution for the lengths of remaining ones.
\end{proof}

\subsection{Representatives of connected strata in genus $g\ge 3$}

Let $d_1,\dots,d_\noz$ be a collection of strictly positive integers,  $p$ a nonnegative integer, and let $g\ge 3$ be an integer. Assume that these integer data satisfy the relation $d_1+\dots+d_\noz-p=4g-4$.

Consider a generalized permutation represented by the following two strings of symbols (see also Figure \ref{fig:Jenkins:Strebel:quadratic:g:ge3}). The top string has the form
$$
\pi^{top}=
\big(0,1,2,3,V(0),\dots,V(g-3),W(0),\dots,W(p-1),4g+p-4\big).
$$
Here the word $V(k)$ is composed of the following six symbols:
$$
V(k)=
4+4k,4g\!-\!1\!+\!p\!+\!2k,7\!+\!4k,4g\!+\!p\!+\!2k,6\!+\!4k,4g\!-\!1\!+\!p\!+\!2k,5\!+\!4k,4g\!+\!p\!+\!2k
$$
and the word $W(l)$ is composed of the following three symbols:
$$
W(l)=4g-4+l,\ 6g+p-5+l,\ 6g+p-5+l\ .
$$
By convention, when $p=0$ the words $W(0),\dots,W(p-1)$ are omitted in $\pi^{top}$.

The bottom string has the form
\begin{equation*}
\begin{split}
\pi^{\mathit{bot}}&=\big(
4g-3+p,3,4g-2+p,2,4g-3+p,1,4g-2+p,\quad
4,5,\dots,4g-4+p,\ 0
\big)
\end{split}
\end{equation*}

\begin{Proposition}
\label{pr:qd:g:ge3}
For any genus $g\ge 3$, and any collection of integers $d_1, \dots, d_\noz,p$ as above the generalized permutation obtained by erasing symbols
\begin{multline*}
\underbrace{1,2,\dots,d_1-1}_{d_1-1},\quad \underbrace{d_1+1,\dots,d_1+d_2-1}_{d_2-1},\quad  \underbrace{\dots \dots}_{\dots}\ ,
\\
\underbrace{\dots \dots}_{\dots}\ ,\quad
\underbrace{d_1+\dots+d_{\noz-1}+1,\dots,d_1+\dots+d_\noz-1}_{d_\noz-1}
\end{multline*}
in the generalized permutation
$\begin{pmatrix}
\pi^{\mathit{top}}\\ \pi^{\mathit{bot}}
\end{pmatrix}$
represents the stratum $\cQ(d_1,\dots,d_\noz,-1^p)$.
\end{Proposition}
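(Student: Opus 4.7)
The proof will follow the same pattern as the proofs of Propositions \ref{pr:abelian:general} and \ref{pr:qd:g2}. The plan is to exhibit a Jenkins--Strebel differential with a single cylinder in the principal stratum $\cQ(1^{4g-4+p},-1^p)$ whose cylinder representation is exactly $\begin{pmatrix}\pi^{\mathit{top}}\\ \pi^{\mathit{bot}}\end{pmatrix}$, then to merge zeroes by contracting a distinguished embedded chain of horizontal saddle connections.

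First, I would draw the associated ribbon graph. The structure suggests a ``backbone'' of $g-2$ copies of the repetitive pattern $V(k)$ (each contributing two units of genus through the four doubled top-line letters $4g-1+p+2k$ and $4g+p+2k$ in a figure-eight configuration) joined in sequence, together with $p$ ``pole petals'' $W(l)$ each of which glues a single simple pole along the doubled symbol $6g+p-5+l$. The seven opening symbols $0,1,2,3$ on top and $4g-3+p,3,4g-2+p,2,4g-3+p,1,4g-2+p$ on the bottom encode the left end of the cylinder (similar to the combinatorial block at the start of the $\cQ(1^{p+4},-1^p)$ pattern in Figure~\ref{fig:Jenkins:Strebel:Q:genus:2}), while $4g+p-4$ at the right end closes the cylinder, paired with the symbol $0$ encoding the cutting diagonal.

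Second, I would verify directly from this ribbon graph that the underlying flat surface lies in $\cQ(1^{4g-4+p},-1^p)$. This has three substeps: (a) every symbol in the alphabet appears exactly twice in the pair $(\pi^{\mathit{top}},\pi^{\mathit{bot}})$, so the cylinder identifications are well defined; (b) a Euler-characteristic count shows that after identification we obtain $4g-4+p$ singular points of cone angle $3\pi$ (simple zeroes) and $p$ singular points of cone angle $\pi$ (simple poles), so the total singularity data is $(1^{4g-4+p},-1^p)$, which gives the right genus via $\sum d_i-p=4g-4$; (c) the length relation \eqref{eq:relation:top:bottom} between top-only and bottom-only symbols admits a strictly positive solution, so the Jenkins--Strebel differential exists and depends on enough free parameters to fill an open set. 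Combined with the fact that the target stratum is connected (this is the standing hypothesis of the section, via the classification of E.~Lanneau recalled in Section~\ref{ss:qd:Classification:of:connected:components}), one representative Jenkins--Strebel differential suffices to represent the whole stratum.

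Third, I would identify the saddle connections labelled $1,2,\dots,d_1+\dots+d_\noz-1$ as a linearly embedded path in the ribbon graph visiting all $4g-4+p$ zeroes in order and avoiding all simple poles. Because none of these distinguished symbols appear among the doubled top-line symbols nor among the pole symbols, their contractions do not interfere with the length relation, so contracting any subset still yields a Jenkins--Strebel differential with a single cylinder. Contracting the prescribed groups merges zeroes $P_1,\dots,P_{d_1}$ into a single zero of degree $d_1$, and so on, producing a flat surface in $\cQ(d_1,\dots,d_\noz,-1^p)$; its cylinder representation is obtained from the original permutation by erasing the contracted symbols, which is exactly the claim. By Convention~\ref{conv::qd:zeroes:at:the:end}, the zero of degree $d_1$ sits at the left endpoint of $X$, as required.

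The main obstacle is the bookkeeping in the second step: carefully tracing the link of every vertex around the ribbon graph to check that the cone angles are exactly $3\pi$ at $4g-4+p$ points and $\pi$ at $p$ points. The delicate places are the transitions from the end of $V(g-3)$ into the $W$-blocks, the cyclic closure linking $4g+p-4$ back to $0$, and the junction between the letters $4g+p-2k$/$4g+p-1-2k$ in consecutive $V$-blocks; each of these is an elementary but tedious enumeration of the ordered pairs of half-edges incident to a common vertex, completely analogous to the verification carried out pictorially for Figure~\ref{fig:Jenkins:Strebel:Q:genus:2} in the proof of Proposition~\ref{pr:qd:g2}.
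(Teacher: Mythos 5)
Your proposal is correct and follows essentially the same route as the paper's proof: exhibit the single-cylinder Jenkins--Strebel differential in $\cQ(1^{4g-4+p},-1^p)$ whose cylinder representation is $\begin{pmatrix}\pi^{\mathit{top}}\\ \pi^{\mathit{bot}}\end{pmatrix}$, note that relation \eqref{eq:relation:top:bottom} involves only the doubled symbols and admits strictly positive solutions even for $p=0$ because $g\ge 3$, and contract the prescribed subcollection of the chain $X_1,\dots,X_{4g-5+p}$ joining all $4g-4+p$ simple zeroes. The only difference is that you propose to verify the singularity data by an explicit vertex-link/Euler-characteristic count where the paper reads it off Figure \ref{fig:Jenkins:Strebel:quadratic:g:ge3} (your informal aside about each $V$-block contributing ``two units of genus'' is a miscount, but it plays no role since the actual verification is the cone-angle count).
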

\begin{figure}[htb]
\includegraphics{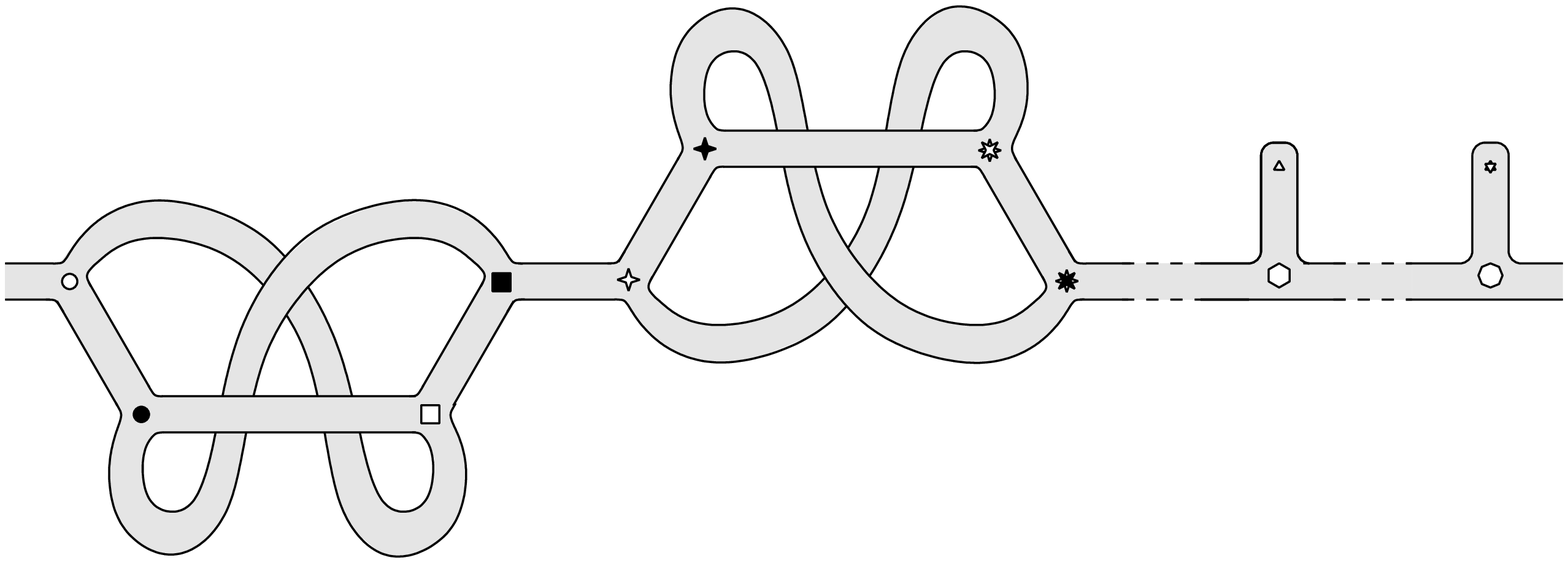}
\begin{picture}(0,0)(73,-85)
\put(166,-116){\scriptsize $\overbrace{\hspace*{55pt}}^{p\ \text{\emph{patterns}}}$}
\begin{picture}(0,0)(0,0)
\put(35,-165){\scriptsize $\underbrace{\hspace*{110pt}}_{g-2\ \text{\emph{patterns}}}$}
\put(-69,-128){\scriptsize $1$}
\put(-41,-186){\scriptsize $2$}
\put(-16,-128){\scriptsize $3$}
\put(19,-142){\scriptsize $4$}
\put(15,-127){\tiny $4g\!-1\!+\!p$}
\put(54,-88){\scriptsize $7$}
\put(119,-127){\tiny $4g\!+\!p$}
\put(75,-113){\scriptsize $6$}
\put(100,-88){\scriptsize $5$}
\put(130,-142){\scriptsize $8$}
\put(140,-141){$\dots$}
\end{picture}
\begin{picture}(0,0)(-132,0)
\put(20,-157){\tiny $4g\!-\!4$}
\put(40,-156){$\dots$}
\put(42,-134){\tiny $6g\!+\!p\!-\!5$}
\put(55,-157){\tiny $4g\!+\!p\!-\!5$}
\put(85,-157){\tiny $4g\!+\!p\!-\!4$}
\put(87,-134){\tiny $6g\!+\!2p\!-\!6$}
\end{picture}
\begin{picture}(0,0)(0,0)
\put(-108,-172){\tiny $4g\!-\!3\!+\!p$}
\put(-5,-172){\tiny $4g\!-\!2\!+\!p$}
\end{picture}
\end{picture}
\vspace{120bp} \caption{
\label{fig:Jenkins:Strebel:quadratic:g:ge3}
A Jenkins--Strebel differential with a single cylinder in the stratum $\cQ(1^{4g+p-4},-1^p)$, $p\ge 0$,  in genus $g\ge 3$
}
\end{figure}
\begin{proof}
Consider a Jenkins--Strebel quadratic differential with a single cylinder as in Figure \ref{fig:Jenkins:Strebel:quadratic:g:ge3}.  Relation \eqref{eq:relation:top:bottom} between the lengths of the horizontal saddle connections has the form
\begin{equation*}
\begin{split}
|X_{4g-3+p}|+|X_{4g-2+p}|=\Big(
\left(|X_{4g-1+p}|+|X_{4g+p}|\right)+&\dots+
\left(|X_{6g-7+p}|+|X_{6g-6+p}|\right)\Big)+\\
&+\left(|X_{6g+p-5}|+\dots
+|X_{6g+2p-6}|\right)
\end{split}
\end{equation*}
Note that by convention the genus $g$ is at least $3$. Thus, even for $p=0$, when the second sum in the right part of the equation is missing, the equation admits strictly positive solutions.

It is clear from Figure \ref{fig:Jenkins:Strebel:quadratic:g:ge3} that the corresponding Jenkins--Strebel quadratic differential has a single cylinder and belongs to the stratum $\cQ(1^{4g+p-4},-1^p)$. It remains to note that the $4g-5+p$ saddle connections $X_1, \dots, X_{4g-5+p}$ join all $4g-4+p$ simple zeroes and that none of these saddle connections is involved in the relation. This implies that we can contract any subcollection of these distinguished saddle connections without affecting the remaining ones.
\end{proof}

\subsection{Representatives of hyperelliptic components}
Representatives of hyperelliptic connected components
of the strata of quadratic differentials were constructed by E. Lanneau in \cite{L}, Section 4.1. In the theorem below we slightly modify the original notations.
\begin{NNTheorem}[E. Lanneau]
For any pair of nonnegative integer parameters $r,s$ the generalized permutation
$$
\begin{pmatrix}
0,A,\quad 1,2, \dots,s,\quad A,\quad s+1,s+2\dots,s+r\\
s+r,\dots,s+2,s+1,\quad B,\quad s,\dots,2,1,\quad B,0
\end{pmatrix}
$$
represents a Jenkins--Strebel quadratic differential with a single cylinder in a hyperelliptic connected component. The table below specifies the corresponding stratum.
\end{NNTheorem}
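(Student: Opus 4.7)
The plan is to follow the same template used for the hyperelliptic components in the Abelian case (cf.\ Proposition~\ref{pr:hyperelliptic:Jenkins:Strebel}): realize the given generalized permutation as an explicit Jenkins--Strebel quadratic differential with a single cylinder, and then exhibit a $180^\circ$ rotation of the cylinder as the hyperelliptic involution.

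First I would build the surface following Section~\ref{ss:generailzed:permutation}. Since the generalized permutation is cylindrical, one gets a single horizontal cylinder whose upper and lower boundaries are subdivided according to the cyclic words $A,1,2,\dots,s,A,s+1,\dots,s+r$ and $s+r,\dots,s+1,B,s,\dots,1,B$; edges bearing equal symbols are identified by the natural isometries. Relation~\eqref{eq:relation:top:bottom} reduces in this case to $2|A|=2|B|$, so there is a full-dimensional family of positive solutions parametrized by any strictly positive choice of $|X_1|,\dots,|X_{s+r}|$ together with $|A|=|B|>0$; the resulting flat surface is a Jenkins--Strebel quadratic differential with a single cylinder.

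Next I would produce the involution. Let $\tau$ denote the central $180^\circ$ rotation of the parallelogram; in a flat coordinate $\tau$ acts as $z\mapsto -z$, so it preserves the horizontal line field and fixes the quadratic differential $dz^2$. The combinatorial heart of the proof is the observation that $\tau$ descends to the glued surface: reading the top word backwards yields $s+r,\dots,s+1,A,s,\dots,1,A$, which coincides with the bottom word under the substitution $A\leftrightarrow B$. Consequently $\tau$ preserves each saddle connection $X_k$ with $1\le k\le s+r$ setwise (reversing its orientation) and interchanges the pair of $A$-edges on the top boundary with the pair of $B$-edges on the bottom boundary. Hence $\tau$ is a well-defined holomorphic involution of the flat surface $S$ preserving the underlying quadratic differential. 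To conclude that $\tau$ is the hyperelliptic involution, I would tally its fixed points: the centre of the parallelogram, the midpoint of each of the $s+r$ invariant saddle connections $X_k$, and those $\tau$-fixed vertex orbits on the cylinder boundary; Riemann--Hurwitz applied to the degree-two branched cover $S\to S/\tau$, combined with the dimension relation $d_1+\dots+d_\noz-p=4g-4$ for the ambient stratum, forces $S/\tau\cong\mathbb{CP}^1$, placing $S$ in a hyperelliptic component in Lanneau's sense.

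Finally I would match the parameters $(r,s)$ with the entries of the table, i.e.\ with one of the strata $\cQ^{\mathit{hyp}}(2j-1,2j-1,2k-1,2k-1)$, $\cQ^{\mathit{hyp}}(2j-1,2j-1,4k+2)$, or $\cQ^{\mathit{hyp}}(4j+2,4k+2)$. To do this I would partition the $2(s+r+2)$ vertices of the parallelogram into orbits under both the edge identifications and $\tau$, compute the cone angle of each orbit in units of $\pi$ directly from the two cyclic words, and record which orbits are fixed versus swapped by $\tau$. I expect this bookkeeping to be the main obstacle: one must keep track of how the parities of $r$ and $s$ redistribute the vertex identifications between the two boundary components and between the central $A,B$-edges and the outer $X_k$-edges, and then check that the resulting list of degrees and $\tau$-action matches precisely the prescribed row of the table, confirming in each of the three regimes that the hyperelliptic involution acts on the singularities exactly as required by Lanneau's classification.
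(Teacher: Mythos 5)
The paper does not actually prove this statement: it is imported verbatim (with adjusted notation) from E. Lanneau's paper \cite{L}, Section 4.1, so there is no in-paper proof to measure your argument against. Your strategy is nonetheless the natural one and is consonant with what the paper proves nearby: it is the converse direction of Proposition \ref{pr:hyperelliptic:JS:implies:symmetry}, the quadratic analogue of Proposition \ref{pr:hyperelliptic:Jenkins:Strebel}, and it runs on Veech-style centrally symmetric patterns as in the appendix discussion of the map $S\mapsto\bar S$. The suspension step is correct (relation \eqref{eq:relation:top:bottom} reduces to $|X_A|=|X_B|$, so positive solutions always exist), and your key combinatorial observation --- the top boundary word read backwards coincides with the bottom one after the substitution $A\leftrightarrow B$, so that with $|X_A|=|X_B|$ the $180^\circ$ rotation descends to an involution $\tau$ of $S$ preserving the quadratic differential --- is exactly the right mechanism.

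Two caveats. First, your fixed-point inventory is off by one: a central symmetry of a one-cylinder surface has \emph{two} fixed points on the equatorial circle of the cylinder (the centre of the parallelogram and the diametrically opposite equatorial point, which on $S$ is the midpoint of the cut $X_0$), not just the centre; with your count the total number of fixed points would be odd in some rows of the table (e.g.\ $r=2j$, $s=2k$ gives $1+(s+r)$ plus no fixed zeroes), and Riemann--Hurwitz cannot close on an odd count, so the genus-zero conclusion would not come out. Second, and more substantially, the content of the statement beyond ``$S$ is hyperelliptic'' is precisely the table: which degrees of singularities occur and how $\tau$ permutes them, as a function of the parities of $r$ and $s$. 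That is exactly the vertex-cycle and cone-angle bookkeeping you defer as ``the main obstacle''; without it you have not identified the stratum, nor verified the interchange/fixing conditions that Lanneau's classification requires to place $S$ in the hyperelliptic \emph{component} (and which also supply the fixed singular points needed to make Riemann--Hurwitz yield quotient genus $0$). The computation is routine and does come out right --- e.g.\ for $r=2j+1$, $s=2k+1$ one gets two $\tau$-fixed zeroes of degrees $4j+2$ and $4k+2$, giving $2+(s+r)+2=2g+2$ fixed points with $g=j+k+2$ --- but as written your proposal is a correct skeleton with the decisive verification left open, rather than a complete proof.
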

\noindent
Here ``$A, B$'' are just symbols of our alphabet.
By convention when $r=0$ (or when $s=0$) the  sequences $1,2,\dots,r$ (correspondingly $r+1,r+2,\dots,r+s$) are empty.
$$
\begin{array}{|c|c|l|}
\hline && \\ [-\halfbls]
r & s & \qquad\quad\text{Embodying stratum} \\
[-\halfbls] &&\\ \hline && \\ [-\halfbls]
2j+1 & 2k+1 & \cQ( \ 4j+2 \ , \ 4k+2 \ ) \\
[-\halfbls] &&\\ \hline && \\ [-\halfbls]
2j   & 2k+1 & \cQ( \ 2j-1 \ , \ 2j-1 \ , \ 4k+2 \ ) \\
[-\halfbls] &&\\ \hline && \\ [-\halfbls]
2j+1 & 2k   & \cQ( \ 4j+2 \ , \ 2k-1 \ , \ 2k-1 \ ) \\
[-\halfbls] &&\\ \hline && \\ [-\halfbls]
2j   & 2k   & \cQ( \ 2j-1 \ , \ 2j-1 \ ,\  2k-1 \ , \ 2k-1 \ ) \\
[-\halfbls] &&\\ \hline
\end{array}
$$

We complete this section with a criterion of E. Lanneau \cite{L} characterizing all cylindrical generalized permutations representing hyperelliptic connected components. This result will be used in the next section; it is analogous to Proposition \ref{pr:54321} in Section \ref{ss:H:hyp}.

\begin{Proposition}[E. Lanneau]
\label{pr:hyperelliptic:JS:implies:symmetry}
Let $q$ be a Jenkins--Strebel quadratic differential with a single cylinder. Suppose that it is not a global square of an Abelian differential. If it belongs to one of hyperelliptic connected components, then a natural cyclic structure \eqref{eq:js:prepermutation} on the set of horizontal saddle connections of\/ $q$ has one of the following two forms:
either it has the form

\begin{equation}
\label{eq:Lanneau:form}
\begin{picture}(0,0)(3,0)
\put(2,10){\vector(1,0){0}}
\put(0,15){\oval(10,10)[bl]}
\put(105,15){\oval(220,10)[t]}
\put(210,15){\oval(10,10)[br]}
\put(2,-4){\vector(1,0){0}}
\put(0,-9){\oval(10,10)[tl]}
\put(105,-9){\oval(220,10)[b]}
\put(210,-9){\oval(10,10)[tr]}
\end{picture}
\smallskip
\begin{matrix}
A\to 1\to 2\to \dots \to s \to A \to s+1 \to \dots \to s+r\\
s+r\to \dots \to s+1 \to B \to s \to \dots \to 2\to 1 \to B
\end{matrix}
\end{equation}
as in the theorem above, or it has the form
\begin{equation}
\label{eq:123123}
\begin{picture}(0,0)(-42,0)
\put(2,10){\vector(1,0){0}}
\put(0,15){\oval(10,10)[bl]}
\put(95,15){\oval(200,10)[t]} % x/2-5
\put(190,15){\oval(10,10)[br]} % x-10
\end{picture}
\smallskip
\begin{picture}(0,0)(3,0)
\put(2,-4){\vector(1,0){0}}
\put(0,-9){\oval(10,10)[tl]}
\put(140,-9){\oval(290,10)[b]}
\put(280,-9){\oval(10,10)[tr]}
\end{picture}
\begin{matrix}
1\to 2\to \dots \to r+1\to 1\to 2\to \dots \to r+1\\
r+2\to r+3\to  \dots \to r+s+2\to r+2\to r+3\to  \dots \to r+s+2
\end{matrix}
\end{equation}
and the corresponding stratum is specified by the table above. (As before, ``$A$'' and ``$B$'' are symbols of the alphabet.)
\end{Proposition}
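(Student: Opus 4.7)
The plan is to use the defining property of the hyperelliptic components: the existence of a holomorphic involution $\tau$ on the underlying Riemann surface satisfying $\tau^{*} q = q$, and to track its action on the unique horizontal cylinder $C$ of $q$. Since $\tau^{*} q = q$, the involution $\tau$ preserves the horizontal foliation and so permutes the maximal horizontal cylinders. As $C$ is the unique such cylinder, $\tau(C) = C$. Following the argument of Proposition \ref{pr:hyperelliptic:Jenkins:Strebel}, I would first deform $q$ inside its hyperelliptic component so that all horizontal saddle connections have pairwise distinct lengths; this preserves the combinatorial structure \eqref{eq:js:prepermutation}.

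In flat coordinates $z = x+iy$ on $C$ with $C \cong \mathbb{R}/w\mathbb{Z} \times [0,h]$, every holomorphic isometry preserving $q|_C = dz^2$ has the form $z \mapsto \pm z + c$. Requiring a nontrivial involution that maps $C$ to itself leaves only two possibilities: either $\sigma_1(x,y) = (x + w/2,\, y)$, which preserves each boundary component of $C$, or $\sigma_2(x,y) = (-x + c,\, h - y)$, which interchanges the two boundary components. Antiholomorphic maps such as $(x,y)\mapsto(x,h-y)$ send $q$ to $\bar{q}$ and so cannot arise as $\tau|_C$, since $\tau$ is holomorphic on the Riemann surface.

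If $\tau|_C = \sigma_1$, then $\tau$ sends each boundary to itself by a rotation of order two. Distinctness of lengths forces each saddle connection to be identified with its unique $\sigma_1$-image, which lies on the same boundary. Hence every identified pair is same-boundary, and each cyclic boundary sequence coincides with itself after a shift by half its length. Labelling the resulting half-period on the top boundary by $1,\dots,r+1$ and on the bottom by $r+2,\dots,r+s+2$ produces exactly form \eqref{eq:123123}. If instead $\tau|_C = \sigma_2$, the reflection sends the cyclic order on $\partial C^+$ to the reverse of the cyclic order on $\partial C^-$. Cross-boundary identified pairs are then sent by $\tau$ to their own partners, while same-boundary pairs on $\partial C^+$ are sent to same-boundary pairs on $\partial C^-$ and vice versa. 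Since $q$ is not a square of an Abelian differential, at least one same-boundary pair must exist, and a matching pair appears on the opposite boundary. A finer combinatorial analysis shows that this pair is unique on each side and that it splits the cyclic order into two contiguous blocks of cross-boundary saddle connections; relabelling these two distinguished pairs $A$ (on top) and $B$ (on bottom) and the cross-boundary pairs $1,\dots,s+r$ in their cyclic order along $\partial C^+$ produces form \eqref{eq:Lanneau:form}. The stratum table is then read off from the resulting ribbon graph by computing the cone angle at each vertex.

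The main obstacle is the combinatorial uniqueness claim in the $\sigma_2$ case: there is exactly one same-boundary pair on each boundary, rather than several possibly interlaced pairs, and the cross-boundary pairs form two contiguous blocks separated by the $A$'s and by the $B$'s. This step uses the fact that $\sigma_2$ has exactly two fixed points in the interior of $C$, at height $h/2$, whereas the hyperelliptic involution has $2g+2$ fixed points in total, so the remaining $2g$ (Weierstrass points) must be distributed among the cone singularities on $\partial C$. Matching this count with the $\sigma_2$-equivariant structure of the single-cylinder ribbon graph forces the symmetric shape \eqref{eq:Lanneau:form} and the specific identification of stratum entries in the table. I would refer the reader to Section~4 of \cite{L} for the full combinatorial verification.
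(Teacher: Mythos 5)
Your proposal does not have a counterpart proof in the paper to compare against: the Proposition is quoted from Lanneau \cite{L} and the paper proves only the Abelian analogue, Proposition \ref{pr:hyperelliptic:Jenkins:Strebel}, by restricting the hyperelliptic involution to the cylinder. Your plan is the natural adaptation of that argument, and the dichotomy $\tau|_C\in\{\sigma_1,\sigma_2\}$ together with the treatment of the $\sigma_1$ case is essentially sound. The genuine gap is that in the $\sigma_2$ case the decisive combinatorial claim --- that there is \emph{exactly one} same-boundary pair on each boundary component and that the cross-boundary pairs are forced into the mirrored arrangement \eqref{eq:Lanneau:form} --- is precisely the content of the Proposition, and you do not prove it: you write ``a finer combinatorial analysis shows'' and then defer to Section~4 of \cite{L}, which turns the argument into a citation of the result at its crux. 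The fixed-point count you sketch does not close this on its own: writing that $\tau$ has $2g+2$ fixed points, two of which are interior to $C$, only yields a linear relation between the number of doubled pairs, the number of cross pairs and the number of fixed singularities; it does not by itself exclude two or more doubled pairs per boundary. Moreover the remaining $2g$ fixed points are \emph{not} ``distributed among the cone singularities'' as you assert --- generically most of them are midpoints of cross-identified saddle connections --- so even the counting step, as stated, is inaccurate and would have to be redone carefully using the genus-zero quotient of the $\sigma_2$-equivariant ribbon graph.

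A secondary but real flaw is the opening reduction ``deform $q$ so that all horizontal saddle connections have pairwise distinct lengths.'' This is not available for quadratic single-cylinder differentials: the lengths satisfy the relation \eqref{eq:relation:top:bottom} because the two boundary circles of the cylinder have equal circumference, and in the target configuration \eqref{eq:Lanneau:form} this forces $|X_A|=|X_B|$; correspondingly the hyperelliptic involution \emph{swaps} $X_A$ and $X_B$ rather than fixing every saddle connection, in contrast with the Abelian case. You must instead take lengths generic subject to \eqref{eq:relation:top:bottom} and allow $\tau$ to permute equal-length saddle connections. With that correction the $\sigma_1$ case still yields \eqref{eq:123123} much as you argue, but it removes the shortcut you were implicitly relying on in the $\sigma_2$ case and confirms that the missing combinatorial analysis there is where the actual work of the Proposition lies.
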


\subsection{Representatives of nonhyperelliptic components}

We need to construct representatives of nonhyperelliptic components
of the strata where such a component exists. Note that such components appear only in genus 2 and higher. We can apply Propositions \ref{pr:qd:g2} and \ref{pr:qd:g:ge3} to obtain a representative of any stratum in genus 2 and higher, in particular of the stratum which contains a nonhyperelliptic component. It remains to prove that our candidate does not get to the hyperelliptic component of this stratum.

\begin{Proposition}
\label{pr:nonhyperelliptic}
For any nonconnected stratum of quadratic differentials containing a hyperelliptic component, the complementary nonhyperelliptic component can be represented by a Jenkins--Strebel differential with a single cylinder as in Propositions \ref{pr:qd:g2} and \ref{pr:qd:g:ge3}.
\end{Proposition}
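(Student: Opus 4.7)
The plan is to use Propositions \ref{pr:qd:g2} and \ref{pr:qd:g:ge3} to produce, for every stratum containing a hyperelliptic connected component, a cylindrical generalized permutation that represents \emph{some} connected component of that stratum, and then to verify that this representative cannot lie in the hyperelliptic component. The verification will rely entirely on Lanneau's criterion stated in Proposition \ref{pr:hyperelliptic:JS:implies:symmetry}: a Jenkins--Strebel differential with a single cylinder in a hyperelliptic component must have a boundary identification of one of the two specific symmetric shapes \eqref{eq:Lanneau:form} or \eqref{eq:123123}. So the game reduces to inspecting the two-line cyclic structures produced in Propositions \ref{pr:qd:g2} and \ref{pr:qd:g:ge3} and showing that they match neither template.

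First I would enumerate the strata in question. In genus $2$ these are $\cQ(6,-1^2)$ and $\cQ(3,3,-1^2)$, obtained by the genus-$2$ construction of Proposition \ref{pr:qd:g2} after the prescribed erasures. For $g\ge 3$ the relevant strata are $\cQ(2j-1,2j-1,2k-1,2k-1)$, $\cQ(2j-1,2j-1,4k+2)$ and $\cQ(4j+2,4k+2)$ (including the corresponding strata with additional poles when they occur in the classification), each of them produced by the generic construction of Proposition \ref{pr:qd:g:ge3}. For every case the representative is obtained by starting from the explicit Jenkins--Strebel differential in $\cQ(1,\dots,1,-1^p)$ drawn in Figure \ref{fig:Jenkins:Strebel:Q:genus:2} or Figure \ref{fig:Jenkins:Strebel:quadratic:g:ge3} and contracting the nondistinguished block of horizontal saddle connections to merge simple zeroes into zeroes of the required degrees.

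Next I would rule out form \eqref{eq:Lanneau:form}. That template requires two symbols $A,B$, each of which appears once in each of the two cyclic lines, and furthermore that the orders of the remaining letters on the two sides of $A$ (respectively $B$) be reversed on the two lines. In both the genus-$2$ permutation of Proposition \ref{pr:qd:g2} and the general permutation of Proposition \ref{pr:qd:g:ge3} one reads off directly from the $V(k)$-blocks and $W(l)$-blocks (or their genus-$2$ analogs) that \emph{every} symbol appearing on both lines either already appears twice on a single line or is distributed between the two lines without the required palindromic pattern. Specifically, the ``$V$-block'' $V(k)$ has each of its four interior symbols appearing twice in the top line, so none of them can serve as ``$A$'' or ``$B$''; while the remaining symbols $1,2,\dots,4g-4+p$ on the top are laid out in the order induced by contracting $X_1,\dots,X_{d_1-1}$, etc., whereas their counterparts on the bottom appear in a quite different order (involving the blocks $4g-3+p,3,4g-2+p,2,\dots$ and the tail $4,5,\dots,4g-4+p$). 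A direct comparison shows the reversed-palindrome pattern of \eqref{eq:Lanneau:form} fails. Form \eqref{eq:123123} is excluded even more easily because each of its two lines is a complete palindromic repetition using symbols disjoint from the other line; in our permutations every pair of distinct lines \emph{shares} letters (for instance $1,2,3$ appear on both lines in Proposition \ref{pr:qd:g2}, and the symbols $4,5,\dots,4g-4+p$ on the bottom of the permutation of Proposition \ref{pr:qd:g:ge3} are exactly the symbols $4,5,\dots$ appearing once each on the top), so our permutations cannot match \eqref{eq:123123} either.

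The main obstacle will be the purely combinatorial bookkeeping in the second step: after the erasures prescribed in the propositions one must check the two shapes \eqref{eq:Lanneau:form} and \eqref{eq:123123} in every stratum in the list, and also verify that the checks remain valid in the border cases (for instance when $p=0$, when $j=k$, or when one of the zeroes has minimal degree so that the erased block in the corresponding position is empty). I would organize this as a short case analysis according to the three families $\cQ(2j-1,2j-1,2k-1,2k-1)$, $\cQ(2j-1,2j-1,4k+2)$, $\cQ(4j+2,4k+2)$, plus the two special genus-$2$ strata, and in each case exhibit a concrete letter that is shared by the two lines but whose neighbors violate the reversed-order condition required by \eqref{eq:Lanneau:form}. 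Since by the classification in Section \ref{ss:qd:Classification:of:connected:components} each of these strata has exactly two connected components, one hyperelliptic and one not, ruling out the hyperelliptic component forces the representative into the nonhyperelliptic one, completing the proof.
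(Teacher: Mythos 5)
Your proposal is correct and takes essentially the same route as the paper: it uses the explicit representatives from Propositions \ref{pr:qd:g2} and \ref{pr:qd:g:ge3} and rules out the hyperelliptic component via Lanneau's criterion (Proposition \ref{pr:hyperelliptic:JS:implies:symmetry}), concluding by the two-component classification. The paper phrases the $g\ge 3$ verification through the shape of relation \eqref{eq:relation:top:bottom} (two symbols repeated within one line versus at least two within the other), which is the same incompatibility with forms \eqref{eq:Lanneau:form} and \eqref{eq:123123} that you check by direct inspection of the permutations.
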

\begin{proof}
By Theorems of M. Kontsevich and E. Lanneau stated in Section \ref{ss:qd:Classification:of:connected:components} any stratum in genera $0$ and $1$ is connected.

There are two disconnected strata $\cQ(3,3,-1^2)$ and $\cQ(6,-1^2)$ in genus $g=2$. The construction from Proposition \ref{pr:qd:g2} produces the following generalized permutations (and Jenkins--Strebel differentials with a single cylinder) representing these strata
correspondingly:
$$
\begin{pmatrix}
0,10,9,10,6\\
3,6,7,7,3,8,8,9,0
\end{pmatrix}
\quad\text{and}\quad
\begin{pmatrix}
0,10,9,10,6\\
6,7,7,8,8,9,0
\end{pmatrix}.
$$
By Proposition \ref{pr:hyperelliptic:JS:implies:symmetry} from the previous section the corresponding Jenkins--Strebel differentials belong to nonhyperelliptic components.

In genus $g\ge 3$ any stratum containing a hyperelliptic component also contains a nonhyperelliptic one. Consider such a stratum.
We can reach it by
applying the construction suggested by Proposition \ref{pr:qd:g:ge3}.  Recall that relation \eqref{eq:relation:top:bottom} between the lengths of horizontal saddle connections has the form
\begin{equation*}
\begin{split}
|X_{4g-3+p}|+|X_{4g-2+p}|=\Big(
\left(|X_{4g-1+p}|+|X_{4g+p}|\right)+&\dots+
\left(|X_{6g-7+p}|+|X_{6g-6+p}|\right)\Big)+\\
&+\left(|X_{6g+p-5}|+\dots
+|X_{6g+2p-6}|\right)
\end{split}
\end{equation*}
Thus, Proposition \ref{pr:hyperelliptic:JS:implies:symmetry} implies that our Jenkins--Strebel differential belongs to a nonhyperelliptic component.
\end{proof}

\subsection{Representatives of connected components of the four exceptional strata $\cQ(9,-1), \cQ(6,3,-1), \cQ(3,3,3,-1), \cQ(12)$}
\label{ss:exceptional:strata}

\begin{table}[t]
$$
\begin{array}{|c|c|c|}
\multicolumn{3}{c}{\text{Genus }g=3}\\
\hline && \\ [-\halfbls]
\text{Component}&
\text{Cardinality}&
\text{Representative}\\
\text{of a}&
\text{of extended}&
\text{of extended}\\
\text{stratum}&
\text{Rauzy class}&
\text{Rauzy class}\\
[-\halfbls] &&\\
\hline && \\ [-\halfbls]
\cQ^{\mathit{irr}}(3,3,3,-1) & 88\,374 &
\begin{pmatrix}0,1,2,3,4,5,1,6,2,3,4,5,6,7\\7,8,8,0\end{pmatrix} \\
[-\halfbls] &&\\ \hline && \\ [-\halfbls]
\cQ^{\mathit{irr}}(6,3,-1) & 72\,172 &
\begin{pmatrix}0,1,2,3,4,5,1,2,3,4,5,6\\6,7,7,0\end{pmatrix} \\
[-\halfbls] &&\\ \hline && \\ [-\halfbls]
\cQ^{\mathit{irr}}(9,-1) & 12\,366 &
\begin{pmatrix}0,1,2,3,4,1,2,3,4,5\\5,6,6,0\end{pmatrix} \\
[-\halfbls] &&\\ \hline\hline && \\ [-\halfbls]
\cQ^{\mathit{reg}}(3,3,3,-1) & 612\,838 &
\begin{pmatrix}0,1,2,3,4,2,3,5,5,6\\7,1,8,7,8,4,6,0\end{pmatrix} \\
[-\halfbls] &&\\ \hline && \\ [-\halfbls]
\cQ^{\mathit{reg}}(6,3,-1) & 531\,674 &
\begin{pmatrix}0,1,2,3,1,2,4,4,5\\6,7,6,7,3,5,0\end{pmatrix} \\
[-\halfbls] &&\\ \hline && \\ [-\halfbls]
\cQ^{\mathit{reg}}(9,-1) & 95\,944 &
\begin{pmatrix}0,1,2,1,2,3,3,4\\5,6,5,6,4,0\end{pmatrix} \\
[-\halfbls] &&\\
\hline
\multicolumn{3}{c}{}\\
[-\halfbls]
\multicolumn{3}{c}{\text{Genus }g=4}\\
\hline && \\ [-\halfbls]
\cQ^{\mathit{irr}}(12) & 146\,049 &
\begin{pmatrix}0,1,2,3,4,5,6,5\\7,6,4,7,3,2,1,0\end{pmatrix} \\
[-\halfbls] &&\\
\hline&&\\
[-\halfbls]
\cQ^{\mathit{reg}}(12) & 881\,599 &
\begin{pmatrix}0,1,2,1,2,3,4,3,4,5\\5,6,7,6,7,0\end{pmatrix} \\
[-\halfbls] &&\\
\hline
\end{array}
$$
\caption{
\label{tab:exceptional:strata}
Representatives of all connected components of the four exceptional strata $\cQ(9,-1)$, $\cQ(6,3,-1)$, $\cQ(3,3,3,-1)$, $\cQ(12)$}
\end{table}

\begin{Proposition}
\label{pr:exceptional:strata}
\hspace*{-5truept}
Each of the strata $\cQ(9,-1)$, $\cQ(6,3,-1)$, $\cQ(3,3,3,-1)$, and $\cQ(12)$ contains exactly two connected components.

Cylindrical generalized permutations representing these components and cardinalities of the corresponding extended Rauzy classes are given in Table \ref{tab:exceptional:strata}.
\end{Proposition}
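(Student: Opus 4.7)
The plan is to reduce the statement to three independent verifications, each essentially computational.

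\textbf{Step 1: Check that each entry of Table \ref{tab:exceptional:strata} represents a single--cylinder Jenkins--Strebel differential in the claimed stratum.} For a cylindrical generalized permutation of the form \eqref{eq:cylindric:permutation}, the construction of Section \ref{ss:generailzed:permutation} shows that whenever relation \eqref{eq:relation:top:bottom} admits a strictly positive solution the suspension is a single--cylinder Jenkins--Strebel differential. I would first verify, by a one--line count, that each of the eight generalized permutations in Table \ref{tab:exceptional:strata} satisfies \eqref{eq:relation:top:bottom} with a strictly positive solution (in every case the relation is nondegenerate, so a generic choice of lengths works). Next, to identify the embodying stratum, I would draw the associated ribbon graph (as in Figure \ref{fig:merging:zeroes}) and read off the cone angles at its vertices: each vertex with valence $2v$ carries a conical singularity of cone angle $\pi v$, hence of degree $v-2$ in the sense of quadratic differentials. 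Summing degrees and checking $d_1+\dots+d_\noz-p=4g-4$ pins down the stratum, and it is straightforward to verify it matches the corresponding row.

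\textbf{Step 2: Distinguish the two components.} By the theorem of E.~Lanneau recalled in Section \ref{ss:qd:Classification:of:connected:components}, each of the four exceptional strata consists of \emph{exactly} two connected components. Therefore, once Step 1 is complete, it suffices to prove that the two representatives listed in each pair of rows of Table \ref{tab:exceptional:strata} belong to \emph{different} extended Rauzy classes, which by the quadratic analog of Veech's theorem (see Section \ref{ss:analogs:of:iet}) is equivalent to belonging to different connected components. The most direct way is to generate, starting from each of the eight permutations, the minimal set of irreducible generalized permutations invariant under the Rauzy operations of Boissy--Lanneau (Appendix \ref{a:generalized:rauzy:operations}). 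If for a given stratum the two orbits obtained are disjoint, then the two representatives are in different components, and the two orbits exhaust the stratum, as required.

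\textbf{Step 3: Compute the cardinalities.} This is a finite but large breadth--first search: start from one of the representatives, apply each Rauzy operation to every newly generated permutation, and mark all outputs until the list is closed under the operations. Executing this enumeration for each representative produces the integers listed in the third column of Table \ref{tab:exceptional:strata}. The closedness of each list under the Rauzy operations, together with the disjointness check of Step 2, confirms simultaneously the cardinality claim and the claim that the stratum has exactly two components. A useful consistency check is to verify that the sum of the two cardinalities equals the total number of irreducible cylindrical generalized permutations of the relevant type in the stratum (independently enumerated).

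\textbf{Main obstacle.} Step 1 and Step 2 are essentially routine once the ribbon--graph dictionary is set up. The real work is the computer enumeration of Step 3: the Rauzy classes have sizes up to $\sim 9\cdot 10^5$, so the implementation must handle the combinatorial definition of irreducibility from \cite{BL} correctly and must encode the Rauzy operations faithfully on generalized (not just true) permutations, where bookkeeping of the distinguished endpoints of $X$ in the suspension is delicate. Apart from this bookkeeping, no conceptual difficulty intervenes.
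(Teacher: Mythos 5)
Your overall strategy (verify the table entries geometrically, then settle everything by machine computation of Rauzy classes) is the same in spirit as the paper's, but the logical structure of your Step 2 has a genuine circularity. You invoke Lanneau's classification theorem to supply the upper bound ``exactly two components,'' but the paper's remark at the end of Section \ref{ss:qd:Classification:of:connected:components} states explicitly that no simple invariant distinguishing the components of $\cQ(9,-1)$, $\cQ(6,3,-1)$, $\cQ(3,3,3,-1)$, $\cQ(12)$ is known, and that the fact that each of these strata has exactly two components \emph{is proved by} the computation of extended Rauzy classes of Section \ref{ss:exceptional:strata} --- i.e., by the very proposition you are proving. The paper's own proof therefore does not assume the count: it enumerates \emph{all} irreducible generalized permutations of up to $9$ symbols (irreducibility in the combinatorial sense of \cite{BL}), sorts them by the stratum they represent, and decomposes the complete set into extended Rauzy classes; the number two is an output of this decomposition, not an input. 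Generating only the two orbits of the eight representatives and checking disjointness, as you propose, yields ``at least two'' components but not ``exactly two.''

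Your fallback consistency check is the right idea for closing this gap, but as stated it would fail: an extended Rauzy class contains many irreducible generalized permutations that are \emph{not} cylindrical, so its cardinality cannot be compared with a count of cylindrical permutations only. Corrected to ``the sum of the two class cardinalities equals the number of all irreducible generalized permutations (on the relevant number of symbols) representing the stratum, independently enumerated,'' it becomes exactly the paper's argument and removes the dependence on \cite{L}. Two smaller points: the valence/degree dictionary in Step 1 is off --- a vertex with $v$ half-edges of the graph of horizontal saddle connections has cone angle $v\pi$ and hence degree $v-2$ (vertices of odd valence do occur), not ``valence $2v$ gives cone angle $\pi v$''; and for the practical feasibility you worry about in Step 3, the paper's remark after Proposition \ref{pr:exceptional:strata} points out the shortcut actually used: work with balanced generalized permutations (Lemma \ref{lm:ballanced} of Appendix \ref{a:generalized:rauzy:operations}), which are present in every class and give a much smaller search space than the full classes of size up to $\sim 9\cdot 10^{5}$ listed in Table \ref{tab:exceptional:strata}.
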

\begin{proof}
This proposition can be obtained by a naive direct computation, which allows one to find all connected components of low-dimensional strata.

One can construct the sets of all irreducible  generalized permutations (see \cite{BL} for a combinatorial definition) of up to $9$ elements.
These permutations can be sorted by which strata they represent.
Having a set of irreducible permutations one can apply the combinatorial construction of the extended Rauzy class (see \cite{BL} and appendix \ref{a:generalized:rauzy:operations}) to decompose this set into a disjoint union of extended Rauzy classes. This direct calculation shows that each of the four exceptional strata $\cQ(9,-1)$, $\cQ(6,3,-1)$, $\cQ(3,3,3,-1)$, and $\cQ(12)$ has exactly two distinct connected components.
\end{proof}

\begin{NNRemark}
Actually, the initial computations were performed differently. Instead of working with extended Rauzy classes, which are really huge, it is more advantageous to find generalized permutations of some particular form, which are still present in every extended Rauzy class (say, those ones, for which both lines have the same length, see Lemma \ref{lm:ballanced} in Appendix \ref{a:generalized:rauzy:operations}). To prove that a stratum is connected it is sufficient to find all generalized permutations of this particular form corresponding to  the given stratum, and then verify that all these generalized permutations are connected by some chains of operations $a,b,c$.
\end{NNRemark}

\appendix
\section{Adjacency of special strata}

Currently there are no known simple invariants distinguishing pairs of connected components of the four exceptional strata. Of course having a flat surface in one of these strata one can consider an appropriate segment, consider the ``first-return map'' defined by a foliation in a transverse direction, figure out the resulting generalized permutation and then use a computer to check to which of the two corresponding extended Rauzy classes it belongs. However, this approach is not very exciting.

A much more geometric approach uses \emph{configurations of homologous saddle connections} (see \cite{MZ} for a definition and for a geometric description). Some configurations are specific for flat surfaces in specific connected components.
The classification of configurations of homologous saddle connections
admissible for each individual connected component of the four exceptional strata is research in progress by E. Lanneau and C. Boissy.

We formulate here just one result to give a flavor of this approach. Basically, it says that for a quadratic differential in, say, $\cQ^{\mathit{reg}}(9,-1)$ one can merge the simple pole and the zero and obtain a nondegenerate flat surface in $\cQ(8)$, while merging the simple pole with the zero for a quadratic differential in $\cQ^{\mathit{irr}}(9,-1)$ we necessarily degenerate the Riemann surface. This statement was conjectured by the author and proved by E. Lanneau in \cite{L}.

\begin{NNTheorem}[E. Lanneau]
Almost every quadratic differential in each component $\cQ^{\mathit{reg}}(3,3,3,-1)$, $\cQ^{\mathit{reg}}(6,3,-1)$, $\cQ^{\mathit{reg}}(9,-1)$ has infinitely many saddle connections of multiplicity one joining the simple pole with one of the zeroes.

No quadratic differential in
$\cQ^{\mathit{irr}}(3,3,3,-1)$, $\cQ^{\mathit{irr}}(6,3,-1)$, $\cQ^{\mathit{irr}}(9,-1)$
admits a saddle connection of multiplicity one joining the simple pole with one of the zeroes.
\end{NNTheorem}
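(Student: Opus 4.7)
\emph{Proof plan.} The approach is to combine Masur--Zorich's theory of homologous saddle connections with the explicit combinatorial data of Proposition~\ref{pr:exceptional:strata}. Recall that a saddle connection $\gamma$ is of multiplicity one precisely when contracting $\gamma$ along the Teichm\"uller flow produces a nondegenerate limit differential in an adjacent stratum; for $\gamma$ joining the simple pole (degree $-1$) to a zero of degree $d$, the adjacent stratum is obtained by replacing the pair ``$d, -1$'' by a single zero of degree $d-1$.

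For the existence part, I would argue in three steps. \emph{First}, starting from the Jenkins--Strebel representatives in Table~\ref{tab:exceptional:strata} for each component $\cQ^{\mathit{reg}}(\cdot)$, I would single out one horizontal saddle connection joining the simple pole to a zero and verify directly that its contraction yields a nondegenerate differential in the corresponding adjacent stratum $\cQ(8)$, $\cQ(5,3)$, $\cQ(6,2)$ or $\cQ(3,3,2)$; this is a finite check on the ribbon graph. \emph{Second}, the condition ``admits a multiplicity-one pole-to-zero saddle connection of length at most $R$'' is open and $SO(2,\mathbb{R})$-invariant, so by ergodicity of the Teichm\"uller geodesic flow on the connected component (Masur--Veech) the condition extends from one surface to almost every surface in $\cQ^{\mathit{reg}}(\cdot)$. \emph{Third}, for the ``infinitely many'' clause, invoke Eskin--Masur(--Zorich) quadratic growth of saddle-connection counts within a fixed configuration type: a nonzero Siegel--Veech constant for the configuration ``single pole-to-zero saddle connection'' forces infinitely many such saddle connections at almost every surface in the component.

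For the non-existence part, one must show that any saddle connection $\gamma$ joining the pole to a zero on $q\in\cQ^{\mathit{irr}}(\cdot)$ is forced to have at least one partner homologous to $\gamma$ in the Masur--Zorich sense. The plan is to reduce this to a combinatorial statement on the extended Rauzy class: choose a transverse segment close to $\gamma$ and compute the associated cylindrical generalized permutation, in which the contraction of $\gamma$ is encoded as the erasure of one specific symbol. It then suffices to verify that for every generalized permutation in the irregular extended Rauzy class from Table~\ref{tab:exceptional:strata}, every candidate erasure of this type either produces a reducible permutation (signalling that $\gamma$ had homologous partners which collapsed simultaneously) or produces an irreducible permutation representing the wrong connected component of the target stratum---neither outcome being compatible with a bona fide adjacency map $\cQ^{\mathit{irr}}(\cdot)\to\cQ(\cdots)$.

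The main obstacle is this last step. In the absence of a known topological invariant distinguishing $\cQ^{\mathit{reg}}$ from $\cQ^{\mathit{irr}}$ (as stressed in Section~\ref{ss:exceptional:strata}), the verification appears to require a brute-force combinatorial traversal of extended Rauzy classes of sizes $12\,366$, $72\,172$ and $88\,374$, feasible on a computer but conceptually unsatisfying. A more illuminating argument---for example via a homological obstruction in the orientation double cover, or via a Kontsevich--Zorich cocycle invariant intrinsic to the irregular components---would be the natural next goal; its discovery would also supply the missing invariant separating the two components of each exceptional stratum.
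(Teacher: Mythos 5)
Your existence half is essentially the paper's own argument: take the explicit one-cylinder Jenkins--Strebel representatives of $\cQ^{\mathit{reg}}(3,3,3,-1)$, $\cQ^{\mathit{reg}}(6,3,-1)$, $\cQ^{\mathit{reg}}(9,-1)$, observe that the horizontal saddle connection $X_{14}$ joining the simple pole to a zero has multiplicity one and can be contracted while compensating inside the length relation \eqref{eq:relation:top:bottom} (Tables \ref{tab:adjacency:regular} and \ref{tab:merging:regular}), so that the contraction lands nondegenerately in $\cQ(3,3,2)$, $\cQ(6,2)$, $\cQ(8)$; then promote the single example to almost every surface, with infinitely many such saddle connections, by $GL(2,\R{})$-invariance, openness and the Eskin--Masur--Zorich configuration machinery. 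One small repair: work with the set ``admits a multiplicity-one pole-to-zero saddle connection'' with \emph{no} length restriction, since the length-at-most-$R$ version is not invariant under the Teichm\"uller geodesic flow; with that change the ergodicity step is fine.

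The non-existence half is where you diverge from the paper, and it has a genuine gap. First, the reduction of ``no surface in $\cQ^{\mathit{irr}}$ carries \emph{any} multiplicity-one pole-to-zero saddle connection'' to a finite check of symbol erasures over the extended Rauzy class is unjustified: erasing a letter of a cylindrical generalized permutation encodes the contraction of a \emph{horizontal} saddle connection lying on the boundary of the single cylinder of a Jenkins--Strebel differential, whereas a typical saddle connection on a typical surface of the component is not of this form (almost every direction is not even Jenkins--Strebel), so your traversal would not quantify over all saddle connections on all surfaces. Second, even for the saddle connections it does see, your proposed contradiction is circular: declaring that an erasure lands in ``the wrong connected component of the target stratum'' presupposes that you already know which components of the adjacent strata $\cQ(8)$, $\cQ(6,2)$, $\cQ(5,3)$, $\cQ(3,3,2)$ can lie in the closure of the irregular component---and that is exactly what is to be proved, and exactly what cannot be certified in the absence of the missing invariant you yourself point to. The paper (following Lanneau \cite{L}, to whom the full proof is attributed) sidesteps both issues with a global adjacency argument: since $\cQ(8)$ is connected, the deformation theory of \cite{KZ03} shows that each stratum in genus three has a \emph{unique} connected component adjacent to the minimal stratum $\cQ(8)$; the explicit contractions recalled above show that the \emph{regular} components are adjacent to $\cQ(8)$; hence the irregular components are not, and a multiplicity-one saddle connection joining the pole to a zero on a surface in $\cQ^{\mathit{irr}}$ would, after contraction and further merging of zeroes through the intermediate strata, produce precisely such an adjacency. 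This converts your universally quantified claim into an existence check on explicit representatives, using only the computation of Proposition \ref{pr:exceptional:strata} (that each exceptional stratum has exactly two components), and requires no traversal of Rauzy classes of cardinalities $12\,366$, $72\,172$, $88\,374$.
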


To prove the second part of the Theorem E. Lanneau suggests
the following argument. The stratum $\cQ(8)$ is connected. Hence,
following \cite{KZ03} and applying classical deformation theory one concludes that any stratum $\cQ(d_1,\dots,d_\noz,-1^p)$ in genus three has a unique connected component adjacent to the minimal stratum $\cQ(8)$. By Proposition \ref{pr:exceptional:strata} each exceptional stratum $\cQ(3,3,3,-1)$, $\cQ(6,3,-1)$, $\cQ(9,-1)$ contains two connected components. Hence, one of the components in each pair is not adjacent to $\cQ(8)$. We shall see that that these components are $\cQ^{\mathit{irr}}(3,3,3,-1)$, $\cQ^{\mathit{irr}}(6,3,-1)$, $\cQ^{\mathit{irr}}(9,-1)$ respectively.

To prove the remaining part of the theorem it is basically sufficient to present at least one surface with at least one saddle connection of multiplicity one. This is done using
appropriate horizontal saddle connections for explicit examples of Jenkins--Strebel differentials with a single cylinder.

In the remaining part of this section we illustrate the technique of
\cite{KZ03} and of \cite{L} which uses Jenkins--Strebel differentials with a single cylinder to study adjacency of the strata. Following Lanneau \cite{L} we describe adjacency of three regular and of three irregular components in genus 3.

\subsection*{Regular components.}
Our representatives of three ``regular'' connected components $\cQ^{\mathit{reg}}(3,3,3,-1)$, $\cQ^{\mathit{reg}}(6,3,-1)$, $\cQ^{\mathit{reg}}(9,-1)$ in genus three are obtained from the canonical generalized permutation representing one-cylinder Jenkins--Strebel differentials in $\cQ(1^9,-1)$ by  the construction described in Proposition \ref{pr:qd:g:ge3}. We present the corresponding generalized permutations in Table \ref{tab:adjacency:regular}; we do not reenumerate the symbols to make modifications more traceable.

By construction, the associated Jenkins--Strebel differential in
$\cQ^{\mathit{reg}}(9,-1)$ can be obtained from the one in $\cQ^{\mathit{reg}}(6,3,-1)$
by contraction of the horizontal saddle connection $X_6$, which results in merging the zeroes of degrees $6$ and $3$ into a single zero of degree $9$. Similarly the Jenkins--Strebel differential in
$\cQ^{\mathit{reg}}(6,3,-1)$ can be obtained from the one in $\cQ^{\mathit{reg}}(3,3,3,-1)$
by contraction of the horizontal saddle connection $X_3$, which results in merging a pair of zeroes of degree $3$ into a single zero of degree $6$. Thus, $\cQ^{\mathit{reg}}(3,3,3,-1)$ is adjacent to $\cQ^{\mathit{reg}}(6,3,-1)$ which is in turn adjacent to $\cQ^{\mathit{reg}}(9,-1)$.

\begin{table}[ht]
$$
\begin{array}{|c|c|}
\hline & \\ [-\halfbls]
\cQ(1^{9},-1) &
\begin{pmatrix}
0,\ 1\ ,\ 2\ ,3,\quad 4\ ,12,\ 7\ ,13,6,12,\ 5\ ,13,\quad 8\ ,14,14,\quad 9\\
10,3,11,\ 2\ ,10,\ 1\ ,11,\quad 4\ ,\ 5\ ,6,\ 7\ ,\ 8\ ,9,\quad 0
\end{pmatrix}
\begin{picture}(0,0)(203,-10)
\put(-14,0){\circle{10}}
\put(0,0){\circle{10}}
\put(30,0){\circle{10}}
\put(58,0){\circle{10}}
\put(110,0){\circle{10}}
\put(145,0){\circle{10}}
\end{picture}
\begin{picture}(0,0)(205,3)
\put(35,0){\circle{10}}
\put(63,0){\circle{10}}
\put(99,0){\circle{10}}
\put(112,0){\circle{10}}
\put(135,0){\circle{10}}
\put(149,0){\circle{10}}
\end{picture}
\\
[-\halfbls] &\\ \hline & \\ [-\halfbls]
\cQ^{\mathit{reg}}(3,3,3,-1) &
\begin{pmatrix}
0,\ 3\ ,\ 12,13,6,12,13,\ 14,14,\ 9\\
10,\ 3\ ,11,10,11,\ 6,9,\ 0
\end{pmatrix}
\begin{picture}(0,0)(122,-10)
\put(0,0){\circle{10}}
\put(21,-13){\circle{10}}
\end{picture}
\\
[-\halfbls] &\\ \hline & \\ [-\halfbls]
\cQ^{\mathit{reg}}(6,3,-1) &
\begin{pmatrix}
0,\ 12,13,\ 6\ ,12,13,\ 14,14,\ 9\\
10,11,10,11,\ 6\ ,9,\ 0
\end{pmatrix}
\begin{picture}(0,0)(122,-10)
\put(40,0){\circle{10}}
\put(75,-13){\circle{10}}
\end{picture}
\\
[-\halfbls] &\\ \hline & \\ [-\halfbls]
\cQ^{\mathit{reg}}(9,-1) &
\begin{pmatrix}
0,\ 12,13,12,13,\ 14,14,\ 9\\
10,11,10,11,\ 9,\ 0
\end{pmatrix}\\
[-\halfbls] &\\
\hline
\end{array}
$$
\caption{
\label{tab:adjacency:regular}
Adjacency of regular components of exceptional strata in genus 3.}
\end{table}

Note that the basic relation \eqref{eq:relation:top:bottom} is the same for all cylindrical generalized permutations in Table \ref{tab:adjacency:regular}. It has the following form:
$$
|X_{10}|+|X_{11}|=
|X_{12}|+|X_{13}|+|X_{14}|.
$$
Thus, we can continuously contract the saddle connection $X_{14}$
compensating this by increasing $|X_{13}|$ or  $|X_{12}|$ or both; such deformation respects the relation above.
Combinatorially this results in erasing the symbol ``$14$'' in the corresponding permutations, see Table \ref{tab:merging:regular}.

\begin{table}[ht]
$$
\begin{array}{|c|c|}
\hline & \\ [-\halfbls]
\cQ(3,3,2) &
\begin{pmatrix}
0,3,\ 12,13,6,12,13,\  9\\
10,3,11,10,11,\ 6,9,\ 0
\end{pmatrix}\\
[-\halfbls] &\\ \hline & \\ [-\halfbls]
\cQ(6,2) &
\begin{pmatrix}
0,\ 12,13,6,12,13,\  9\\
10,11,10,11,\ 6,9,\ 0
\end{pmatrix}\\
[-\halfbls] &\\ \hline & \\ [-\halfbls]
\cQ(8) &
\begin{pmatrix}
0,\ 12,13,12,13,\  9\\
10,11,10,11,\ 9,\ 0
\end{pmatrix}\\
[-\halfbls] &\\
\hline
\end{array}
$$
\caption{
\label{tab:merging:regular}
Merging simple pole to a zero for regular components of exceptional strata in genus 3}.
\end{table}

Geometrically this means that the saddle connection $X_{14}$ joining the simple pole to a corresponding zero has multiplicity one. Merging the simple pole with the corresponding zero we get nondegenerate Jenkins--Strebel differentials in the strata $\cQ(3,3,2)$, $\cQ(6,2)$, $\cQ(8)$
respectively. In our ribbon graph representation of Jenkins--Strebel differentials as in Figure \ref{fig:Jenkins:Strebel:quadratic:g:ge3} we continuously contract the appendix with a single simple pole compensating the missing length on the complementary component by making the saddle connection $X_{13}$ longer. These considerations show, in particular, that regular connected components are adjacent to the minimal stratum $\cQ(8)$, and hence, irregular components are not.

\subsection*{Irregular (irreducible) components.}
The connected component $\cQ^{\mathit{irr}}(9,-1)$ is adjacent to
$\cQ^{\mathit{irr}}(6,3,-1)$ which is in turn adjacent to $\cQ^{\mathit{irr}}(3,3,3,-1)$. To verify this, it is sufficient to note that
the generalized permutation representing the component $\cQ^{\mathit{irr}}(6,3,-1)$ in Table \ref{tab:exceptional:strata} can be obtained from the one representing $\cQ^{\mathit{irr}}(3,3,3,-1)$
by erasing the symbol ``$6$'' followed by reenumeration. Note that the saddle connection $X_6$ is not involved in the corresponding relation between lengths of horizontal saddle connections. Similarly the generalized permutation representing $\cQ^{\mathit{irr}}(9,-1)$ in our table can be obtained from the one representing $\cQ^{\mathit{irr}}(6,3,-1)$
by erasing the symbol ``$5$'' followed by reenumeration. The saddle connection $X_5$ is not involved in the corresponding relation between lengths of horizontal saddle connections.

\section{Rauzy operations for generalized permutations}
\label{a:generalized:rauzy:operations}

The Rauzy operations $a$ and $b$ on permutations were introduced by G. Rauzy in \cite{Rauzy}. The related dynamical system (a discrete analog of the Teichm\"uller geodesic flow) was extensively studied in the breakthrough paper \cite{V82} of W. A. Veech. This technique was developed and applied in \cite{AGY,AF,AV,Bf,MMY,Z96}, and in other papers; see also surveys \cite{Vi,Y03,Y05,Z:Houches}.

In this section we extend the combinatorial definitions of
Rauzy operations $a$ and $b$ to generalized permutations; see
paper \cite{BL} of C. Boissy and E. Lanneau for a comprehensive study of the geometry, dynamics and combinatorics of generalized permutations and of Rauzy--Veech induction on generalized permutations.

\subsection*{Rauzy operations.}
Operations are not defined for a generalized permutation $\pi$ when the rightmost entries of both lines of\/ $\pi$ are represented by the same symbol.

Rauzy operation $a$ acts as follows.
Denote by ``$0$'' the symbol representing the rightmost entry in the top line. Recall that every symbol appears in a generalized permutation exactly twice. To distinguish two appearances of the symbol ``$0$'' we denote by $0_1$ the rightmost entry in the top line and by $0_2$ its twin.

If\/ $0_2$ belongs to the bottom line, we erase the rightmost entry $\beta_s$ of the bottom line and insert $\beta_s$ to the right of\/ $0_2$, as in the example below:
$$
\begin{pmatrix}
\alpha_1, \dots, \alpha_r, 0_1  \\
\beta_1, \dots, \beta_{k-1}, 0_2,\ \beta_k,\dots, \beta_{s-1},\  \beta_s\
\end{pmatrix}
\begin{picture}(0,0)(16,4)
\put(2,0){\circle{14}}
\put(-62,-4){\vector(0,1){0}}
\put(-30,-7){\oval(64,10)[b]}
\end{picture}
\stackrel{a}{\longmapsto}
\begin{pmatrix}
\alpha_1, \dots, \alpha_r, 0_1  \\
\beta_1, \dots, \beta_{k-1}, 0_2, \beta_s, \beta_k, \dots,\beta_{s-1}
\end{pmatrix}
\vspace{5pt}
$$

If\/ $0_2$ belongs to the same line as $0_1$ (\ie to the top one), we have to apply an additional test. If the rightmost symbol $\beta_s$ in the bottom line appears in this line twice and all other symbols of the bottom line appear in this line only once, the corresponding operation is not defined. Otherwise, we remove the rightmost symbol $\beta_s$ in the bottom line and insert it to the left of\/ $0_2$. Thus, $\beta_s$ moves to the top line, as in the example below:
$$
\begin{pmatrix}
\dots, \alpha_k,\ 0_2, \alpha_{k+1}, \dots, \alpha_r, 0_1  \\
\beta_1, \dots, \beta_{s-1},\ \beta_s
\end{pmatrix}
\begin{picture}(0,0)(37,4)
\put(2,0){\circle{14}}
\put(-50,14){\vector(0,-1){0}}
\put(-45,-7){\oval(94,10)[b]}
\put(-92,-7){\line(0,1){25}}
\put(-71,17){\oval(42,10)[t]}
\end{picture}
\stackrel{a}{\longmapsto}
\begin{pmatrix}
\dots, \alpha_k,  \beta_s, 0_2, \alpha_{k+1}, \dots, \alpha_r, 0_1  \\
\beta_1   \dots, \beta_{s-1}
\end{pmatrix}
\vspace{8pt}
$$

Operation $b$ is defined in complete analogy with operation $a$ by interchanging the words ``top'' and ``bottom'' in the definition. Namely, it acts as:

$$
\begin{pmatrix}
\beta_1, \dots, \beta_{k-1}, 0_2,\ \beta_k,\dots, \beta_{s-1},\  \beta_s\ \\
\alpha_1, \dots, \alpha_r, 0_1
\end{pmatrix}
\begin{picture}(0,0)(16,-10)
\put(2,0){\circle{14}}
\put(-62,3){\vector(0,-1){0}}
\put(-30,7){\oval(64,10)[t]}
\end{picture}
\stackrel{b}{\longmapsto}
\begin{pmatrix}
\beta_1, \dots, \beta_{k-1}, 0_2, \beta_s, \beta_k, \dots,\beta_{s-1}\\
\alpha_1, \dots, \alpha_r, 0_1  \\
\end{pmatrix}
\vspace{5pt}
$$
when $0_1$ and $0_2$ are in different lines, and it acts as:

$$
\begin{pmatrix}
\beta_1, \dots, \beta_{s-1},\ \beta_s\\
\dots, \alpha_k,\ 0_2, \alpha_{k+1}, \dots, \alpha_r, 0_1
\end{pmatrix}
\begin{picture}(0,0)(37,-10)
\put(2,0){\circle{14}}
\put(-50,-15){\vector(0,1){0}}
\put(-45,7){\oval(94,10)[t]}
\put(-92,-18){\line(0,1){25}}
\put(-71,-18){\oval(42,10)[b]}
\end{picture}
\stackrel{b}{\longmapsto}
\begin{pmatrix}
\beta_1   \dots, \beta_{s-1}\\
\dots, \alpha_k,  \beta_s, 0_2, \alpha_{k+1}, \dots, \alpha_r, 0_1
\end{pmatrix}
\vspace{8pt}
$$
when they are both in the bottom line. In the second case the operation $b$ is not defined when the rightmost symbol $\beta_s$ in the top line appears in the top line twice and all other symbols of the top line appear in this line only once.

In addition to Rauzy operations $a$, $b$ we define one more operation denoted by $c$. It reverses the elements in each line and then interchanges the lines:
$$
\begin{pmatrix}
\alpha_1,\alpha_2,\dots,\alpha_p  \\
\beta_1,\beta_2, \dots, \beta_q
\end{pmatrix}\
\stackrel{c}{\longmapsto}\
\begin{pmatrix}
\beta_p, \dots, \beta_2,\beta_1\\
\alpha_p,\dots,\alpha_2,\alpha_1
\end{pmatrix}
$$

Rauzy operations have the following geometric interpretation.
Consider a closed flat surface $S$ represented by a meromorphic quadratic differential with at most simple poles. Consider a horizontal segment $X$, the ``first-return map'' of the vertical foliation to $X$ in the sense of Section \ref{ss:analogs:of:iet},
and the corresponding generalized permutation $\pi$. Compare the lengths of the rightmost intervals in the top and in the bottom lines and denote the longer of two intervals by $X_0$ and the shorter one by $X_{\beta_s}$. Consider now a horizontal segment $X'\subset X$ obtained by shortening $X$ on the right by chopping out of\/ $X$ a piece of length $|X_{\beta_s}|$. A generalized interval exchanged map induced by the vertical foliation on $X'$ will be associated to a permutation $a(\pi)$ or $b(\pi)$ depending whether $X_0$ was in the top or in the bottom line.

Recall that the notion of ``top'' or ``bottom'' shores of a slit along $X$ is a matter of convention: it depends on a choice of orientation of\/ $X$ which is not canonical for quadratic differentials. Choosing the opposite orientation of\/ $X$ we get a generalized interval-exchange transformation with generalized permutation $c(\pi)$.

\subsection*{Rauzy classes and extended Rauzy classes.}

\begin{Definition}
A generalized permutation is called \emph{irreducible} if it can be
realized by a generalized interval-exchange transformation $T$ satisfying
the following conditions:
\begin{enumerate}
\item
there exists a Riemann surface $S$ and a meromorphic quadratic differential
$q$ with at most simple poles on it, such that the vertical foliation of\/
$q$ is minimal;
\item
there exists an oriented horizontal segment $X$ adjacent
at its left endpoint to a singularity of\/ $q$ such that the first-return map of the vertical foliation to $X$ induces the generalized interval-exchange transformation $T$.
\end{enumerate}
\end{Definition}

This natural definition is however extremely inefficient.
A purely combinatorial criterion for irreducibility of a generalized permutation was not known for quite a long time; it was recently elaborated by C. Boissy and E. Lanneau in \cite{BL}.

It follows from the definition above that if\/ $\pi$ is irreducible, $a(\pi)$ and $b(\pi)$ are also irreducible (when the corresponding operation is applicable).

\begin{Definition}
A \emph{Rauzy class} $\mathfrak{R}(\pi)$ of an irreducible generalized
permutation $\pi$ is a minimal set containing $\pi$ and invariant under the
Rauzy operations $a$, $b$.
\end{Definition}

Here invariance under operations $a$, $b$ is understood in the sense ``when
applicable'': a Rauzy class may contain a generalized permutation for which
one of the two operations is not applicable.

Up to this point there is not so much difference between combinatorial definitions of Rauzy operations and Rauzy classes
of ``true'' permutations and ``generalized'' permutations (with the exception of the fact that for some generalized permutations one of the Rauzy operations might be not defined). There is, however, a radical difference with operation $c$. Namely, C. Boissy and E. Lanneau have observed that for an irreducible generalized permutation $\pi$
the image $c(\pi)$ is not necessarily irreducible, while for true permutations the operation $c$ preserves this property. Thus, in their definition of extended Rauzy classes C. Boissy and E. Lanneau make the corresponding correction (compared to ``true'' permutations):

\begin{Definition}[C. Boissy and E. Lanneau]
An \emph{extended Rauzy class} $\mathfrak{R}_{ex}(\pi)$ of an irreducible
generalized permutation $\pi$ is the intersection of a minimal set
containing $\pi$ and invariant under operations $a$, $b$, $c$ with the set of
irreducible generalized permutations.
\end{Definition}

Conjecturally an extended Rauzy class can be defined in the following alternative way. Let us modify the definition of the operation
$c$ by saying that $c(\pi)$ is not defined when $c(\pi)$ is not irreducible. Having an irreducible permutation $\pi$ we define a minimal set $\mathfrak{R}'_{ex}(\pi)$
containing $\pi$ invariant under Rauzy operations $a$, $b$, $c$ (where for some generalized permutations in $\mathfrak{R}'_{ex}(\pi)$ some operations might not be applicable).

\begin{Conjecture}
The sets $\mathfrak{R}_{ex}(\pi)$ and $\mathfrak{R}'_{ex}(\pi)$ coincide.\footnote{When the paper was already in press C.~Boissy has
announced a proof of this conjecture.}
\end{Conjecture}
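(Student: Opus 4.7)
The inclusion $\mathfrak{R}'_{ex}(\pi)\subseteq\mathfrak{R}_{ex}(\pi)$ is immediate from the definitions: every permutation appearing in $\mathfrak{R}'_{ex}(\pi)$ is irreducible by construction and is obtained from $\pi$ by a sequence of $a$, $b$, $c$ operations which is \emph{a fortiori} admissible in the definition of $\mathfrak{R}_{ex}(\pi)$. The substance of the conjecture lies entirely in the reverse inclusion.

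The plan is as follows. Given $\sigma\in\mathfrak{R}_{ex}(\pi)$, fix a finite sequence
\[
\pi=\pi_0\longmapsto\pi_1\longmapsto\cdots\longmapsto\pi_n=\sigma
\]
of $a$, $b$, $c$ moves realising this membership, with $\pi_0,\pi_n$ irreducible but intermediate $\pi_i$ possibly reducible. Since $a$ and $b$ preserve irreducibility, any maximal ``reducible excursion'' is a subsequence $\pi_i\to\pi_{i+1}\to\cdots\to\pi_j$ whose first and last steps are $c$-operations, with $\pi_i,\pi_j$ irreducible and $\pi_{i+1},\dots,\pi_{j-1}$ reducible. By induction on the number of such excursions, it suffices to prove the following lemma: if irreducible $\pi',\pi''$ are connected by such an excursion, then they are already connected by a sequence of $a$, $b$ and irreducibility-preserving $c$ operations.

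The natural route to this lemma is geometric, using the Boissy--Lanneau analogue of Veech's theorem which identifies $\mathfrak{R}_{ex}(\pi)$ with the set of all irreducible generalized permutations realised as the first-return map of the vertical foliation on some oriented transverse segment $X\subset S$, as $S$ ranges over the connected component $\mathcal{C}$ of the stratum containing the suspension $S(\pi)$. Under this identification $a$ and $b$ correspond to shortening $X$ and $c$ to reversing its orientation. The suspensions $S(\pi')$ and $S(\pi'')$ both lie in $\mathcal{C}$, hence may be joined by a continuous path along which one simultaneously deforms the segment and its orientation. Reducibility of the induced generalized permutation is a positive-codimension condition on the triple (surface, segment, orientation), so the path can be chosen to avoid reducible configurations, and a sufficiently fine discretisation yields a sequence in $\mathfrak{R}'_{ex}$ joining $\pi'$ to $\pi''$.

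The main obstacle is converting this geometric sketch into a rigorous combinatorial argument, since the continuous deformation must be refined into a very particular sequence of elementary Rauzy--Veech moves together with orientation reversals. I expect the right tool to be the explicit combinatorial criterion for irreducibility of \cite{BL} combined with the identities $c\circ a=b\circ c$ and $c\circ b=a\circ c$ (wherever both sides are defined), which express the fact that $c$ swaps the roles of the top and bottom lines. These identities let one push every $c$-move to one end of the path, reducing the general case to that of a single $c$-step producing a reducible image, and then to construct an explicit bypass using only $a$ and $b$ by exploiting the very constrained combinatorial shape that reducibility of $c(\pi')$ imposes on $\pi'$. Carrying out this case analysis in full detail is, in my expectation, the crux of the argument subsequently announced by Boissy.
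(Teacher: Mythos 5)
The paper does not prove this statement: it appears only as a Conjecture, and the footnote records that a proof was announced by C.~Boissy after the paper was already in press. Your text is likewise not a proof but a programme. The easy inclusion $\mathfrak{R}'_{ex}(\pi)\subseteq\mathfrak{R}_{ex}(\pi)$ is indeed immediate, but for the substantive reverse inclusion you explicitly leave the decisive step (``carrying out this case analysis in full detail'') undone, so nothing beyond the trivial half is actually established.

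Moreover, the two tools you propose for the hard half fail as stated. First, your reduction to ``reducible excursions bounded by $c$-moves'' tacitly assumes that $a$ and $b$ send reducible generalized permutations to reducible ones; the paper records only the opposite preservation, and you give no argument. Second, the commutation identities $c\circ a=b\circ c$ and $c\circ b=a\circ c$ are false: the analogous identities hold for the line-interchange (``inverse'') operation, see \eqref{eq:inverse:and:Rauzy}, but $c$ also \emph{reverses} each line, so conjugating $a,b$ by $c$ yields operations acting at the left end of the interval --- which is exactly why adjoining $c$ enlarges a Rauzy class to an extended one. For instance, for $\pi=\left(\begin{smallmatrix}A&B&C\\ C&B&A\end{smallmatrix}\right)$ one has $c(\pi)=\pi$, while $c(a(\pi))=\left(\begin{smallmatrix}B&A&C\\ C&B&A\end{smallmatrix}\right)$ and $b(c(\pi))=b(\pi)=\left(\begin{smallmatrix}A&C&B\\ C&B&A\end{smallmatrix}\right)$ differ even up to renaming the alphabet; so the plan of pushing all $c$-moves to one end of the path collapses. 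Finally, the geometric bypass is unfounded: reducibility of the induced generalized permutation is not a positive-codimension condition on the triple (surface, segment, orientation). A reducible generalized permutation is induced on an open set of perfectly ordinary configurations --- indeed $c(\pi')$ arises from the \emph{same} segment as $\pi'$ with reversed orientation, which is precisely how irreducibility can be lost; the obstruction is the combinatorial existence of suspension data with the marked singularity at the left endpoint, not a degeneracy that can be perturbed away along a path. Hence ``choose the path to avoid reducible configurations and discretise'' is essentially a restatement of the conjecture rather than an argument for it.
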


C. Boissy and E. Lanneau prove in \cite{BL} that for any generalized permutation $\pi_1$ in a Rauzy class $\mathfrak{R}(\pi_0)$ of an irreducible generalized permutation $\pi_0$ one has $\mathfrak{R}(\pi_0)=\mathfrak{R}(\pi_1)$. This implies that an extended Rauzy class (whichever of the two definitions above we choose) is a disjoint union of Rauzy classes.

In the statement of the conjecture below we allow a degree $d_i$ in $\cQ(d_1,\dots,d_\noz)$ to have value ``$-1$''; in this case it corresponds to a simple pole.

\begin{Conjecture}
\label{conj:extended:equals:k:Rauzy:calsses}
Let an extended Rauzy class $\mathfrak{R}_{ex}$ represent a connected component of a stratum $\cQ(d_1, \dots, d_\noz)$ or of a stratum $\cH(d_1, \dots, d_\noz)$.  Let
$k$ be the number of pairwise distinct entries in $\{d_1,\dots,d_\noz\}$.
Then $\mathfrak{R}_{ex}$ is a disjoint union of exactly $k$ nonempty Rauzy classes.
\end{Conjecture}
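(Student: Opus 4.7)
The plan is to introduce a Rauzy invariant that separates Rauzy classes inside a fixed extended Rauzy class, and to show it takes exactly $k$ distinct values. For an irreducible generalized permutation $\pi$, define $d_{\mathit{left}}(\pi)$ to be the degree of the singularity of the embodying differential located at the left endpoint of the segment $X$ in a suspension over $\pi$. This degree is a purely combinatorial function of $\pi$, read off from the cycle structure of the auxiliary ``zero-finding'' pairing permutation, following Veech in the case of true permutations and Boissy--Lanneau \cite{BL} in the case of generalized permutations.

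First I would check that $d_{\mathit{left}}$ is a Rauzy invariant. The operations $a$ and $b$ admit the geometric interpretation of a Rauzy--Veech induction step, which shortens $X$ on the right and leaves its left endpoint fixed. Hence the singularity anchored at the left endpoint is preserved, so $d_{\mathit{left}}$ is constant on each Rauzy class. Thus $d_{\mathit{left}}$ descends to a well-defined function on the set of Rauzy classes contained in a given extended Rauzy class, and its image is contained in the set of $k$ distinct values of $\{d_1,\dots,d_\noz\}$.

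Next, I would show that every such value is attained. Operation $c$ reverses the orientation of $X$, interchanging left and right endpoints; hence applied to $\pi$ with $d_{\mathit{left}}(\pi)=d$ and $d_{\mathit{right}}(\pi)=d'$ it produces a permutation with $d_{\mathit{left}}=d'$. Combining $c$ with chains of $a,b$-moves, and using the explicit Jenkins--Strebel representatives constructed in Sections \ref{s:Abelian:differentials} and \ref{s:quadratic} together with the freedom (guaranteed by Veech's theorem) to anchor the horizontal segment at any chosen singularity of a generic flat surface in the component, one produces, for each distinct degree $d$ occurring in the partition, a permutation in the extended Rauzy class with $d_{\mathit{left}}=d$. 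Therefore $d_{\mathit{left}}$ surjects onto the $k$ distinct degree values, and the extended Rauzy class contains \emph{at least} $k$ Rauzy classes.

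The hard part will be the reverse inequality: showing that $d_{\mathit{left}}$ \emph{separates} Rauzy classes, i.e.\ that any two irreducible generalized permutations $\pi_1,\pi_2$ in the same extended Rauzy class with $d_{\mathit{left}}(\pi_1)=d_{\mathit{left}}(\pi_2)$ are connected by an $a,b$-chain alone. The strategy is geometric, mimicking Veech's connectedness argument in the Abelian setting. Realize $\pi_i$ as the first-return map on a horizontal segment $X_i\subset S_i$ with $S_1,S_2$ in the fixed connected component and the left endpoint of $X_i$ at a singularity of degree $d$. Exploiting connectedness of the component and continuous deformations of the anchored segment, one interpolates between $(S_1,X_1)$ and $(S_2,X_2)$ by a path $(S_t,X_t)$ of such pairs; each discrete combinatorial event along the path (a vertical separatrix crossing the right endpoint of $X_t$) translates into a Rauzy operation $a$ or $b$ applied to the current permutation. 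The two technical subtleties are handling the case when several singularities share the degree $d$ (which requires a global monodromy argument rather than purely local moves) and preserving irreducibility along the path, for which one uses the combinatorial characterization of \cite{BL}. Once this step is completed, the fibers of $d_{\mathit{left}}$ coincide with the Rauzy classes, and the extended Rauzy class decomposes into exactly $k$ nonempty Rauzy classes, as conjectured.
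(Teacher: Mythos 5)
You should first note that the statement you are proving is stated in the paper as a \emph{Conjecture}: the paper offers no proof at all, only computational confirmation for low-dimensional strata (the tables of Rauzy classes in the appendix) and the remark that the expected decomposition is by the degree of the singularity attached to the left endpoint of\/ $X$. So there is no argument in the paper to compare yours with; your text has to stand on its own, and as it stands it is a program rather than a proof.

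The sound parts are the easy ones: $d_{\mathit{left}}$ is well defined combinatorially, it is unchanged by $a$ and $b$ (Rauzy induction fixes the left endpoint and the singularity attached to it), and, granting that every value of the partition is realized as $d_{\mathit{left}}$ of some permutation in $\mathfrak{R}_{ex}$, one gets \emph{at least} $k$ Rauzy classes. The entire content of the conjecture is your third step, and there the sketch does not close the gap. Concretely: (i) connectedness of the component of the stratum does \emph{not} yield connectedness of the space of pairs (flat surface, horizontal segment anchored at a singularity of prescribed degree); you need that the monodromy of the connected component acts transitively on the singularities of a given degree \emph{and} on the horizontal separatrices emanating from them. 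This is precisely the point that could produce more than $k$ classes, it is where special families (hyperelliptic components, strata such as $\cH(2l,2l)$ where an involution permutes or fixes the zeroes, strata with marked poles) demand separate arguments, and it cannot be dismissed as a ``technical subtlety requiring a global monodromy argument''---that argument is the theorem. (ii) The dictionary ``path $(S_t,X_t)$ $\Rightarrow$ chain of $a,b$-moves'' is not automatic: Rauzy operations only shorten the segment at its right end while keeping the induced exchange at the minimal number of subintervals, minimality of the vertical foliation fails on a dense set of parameters along any path, and for generalized permutations the operations may be undefined or irreducibility may be lost; Veech's and Boissy--Lanneau's results are a.e.\ statements and do not directly give the interpolation you invoke. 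Until (i) and (ii) are carried out, the fibers of $d_{\mathit{left}}$ have not been shown to be single Rauzy classes, so the claimed equality ``exactly $k$'' is unproved. (For context: the conjecture was indeed later proved by C.~Boissy, by classifying connected components of strata with a marked separatrix---the same general strategy you outline, but the transitivity and connectedness statements above are where the real work lies.)
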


For example, the extended Rauzy class representing the stratum $\cH(2,1,1)$ is a disjoint union of two Rauzy classes. Permutations
of the first one correspond to horizontal intervals adjacent on the left to a simple zero while permutations of the second Rauzy class correspond to horizontal intervals adjacent on the left to a zero
of degree two. This conjecture is confirmed for all low-dimensional strata, see Appendix \ref{a:tables}.

\begin{NNRemark}
Note that following Convention \ref{conv:zeroes:at:the:end} in Section
\ref{ss:Interval:exchange:transformations:and:Rauzy:classes} and Convention
\ref{conv::qd:zeroes:at:the:end} in Section
\ref{ss:qd:Classification:of:connected:components} the first entry in a set
$(d_1,\dots,d_\noz)$ of singularity data defining a connected component of
a stratum $\cH(d_1,\dots,d_\noz)$ or of a stratum
$\cQ(d_1,\dots,d_\noz,-1^p)$ was distinguished: a cylindrical permutation
representing the corresponding component was constructed in such way that
the degree of a zero associated to the left endpoint was equal to $d_1$.
For example, Proposition \ref{pr:abelian:general} constructs
representatives of distinct Rauzy classes for $\cH(2,1,1)$ and for
$\cH(1,2,1)$.

For the strata of quadratic differentials
we did not construct generalized permutations having simple poles associated to their left endpoints. This can easily be done by an appropriate cyclic move of an appropriate line in a cylindrical representation \eqref{eq:js:prepermutation} of the corresponding permutation. Otherwise, modulo Conjecture \ref{conj:extended:equals:k:Rauzy:calsses} we have constructed representatives of all Rauzy classes (and not only extended Rauzy classes) of irreducible nondegenerate generalized permutations.
\end{NNRemark}

\subsection*{Inverse generalized permutations and balanced generalized permutations.}
\strut\break We complete this section with a short discussion of two notions related to generalized permutations.

\begin{Definition}
We say that a generalized permutation $\pi^{-1}$ is \emph{inverse} to a generalized permutation $\pi$ if it can be obtained from $\pi$
by interchanging the lines (up to a standard convention on equivalence of alphabets).
$$
\pi=\begin{pmatrix}
\alpha_1,\alpha_2,\dots,\alpha_p  \\
\beta_1,\beta_2, \dots, \beta_q
\end{pmatrix}
\qquad
\pi^{-1}=
\begin{pmatrix}
\beta_1,\beta_2, \dots, \beta_q\\
\alpha_1,\alpha_2,\dots,\alpha_p
\end{pmatrix}
$$
\end{Definition}

The operation of taking inverse interacts with the Rauzy operations in the following way:
\begin{equation}
\label{eq:inverse:and:Rauzy}
a(\pi^{-1})=\big(b(\pi)\big)^{-1}\qquad
b(\pi^{-1})=\big(a(\pi)\big)^{-1}\qquad
c(\pi^{-1})=\big(c(\pi)\big)^{-1}
\end{equation}

\begin{Lemma}
The generalized permutations $\pi$ and $\pi^{-1}$
are simultaneously irreduci\-b\-le or not. When they are irreducible,
they belong to the same extended Rauzy class.
\end{Lemma}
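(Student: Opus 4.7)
The plan is to prove the lemma in two steps, first addressing irreducibility and then the statement about extended Rauzy classes. Throughout, I exploit the geometric interpretation of inversion already implicit in the paper, and invoke the theorem of Boissy--Lanneau (the analog for generalized permutations of Veech's theorem cited in Section \ref{ss:Interval:exchange:transformations:and:Rauzy:classes}) which puts extended Rauzy classes of irreducible generalized permutations into one-to-one correspondence with connected components of strata.

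For the first step, I would use the geometric definition of irreducibility. Suppose $\pi$ is irreducible, so that it is realized as the first-return map of the vertical foliation of some minimal meromorphic quadratic differential $q$ on a Riemann surface $S$, restricted to a horizontal segment $X$ adjacent on its left to a singularity of $q$. In this realization, the top line of $\pi$ encodes the subintervals of the upper shore $X^+$ of the slit along $X$, while the bottom line encodes those of the lower shore $X^-$. As the authors emphasize in the Remark following Example in Section \ref{ss:analogs:of:iet}, for a quadratic differential the labeling of the two shores as ``top'' and ``bottom'' is not canonical: interchanging the labels produces the same geometric first-return map. On the combinatorial side, this relabeling is precisely the operation $\pi \mapsto \pi^{-1}$. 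Hence $\pi$ and $\pi^{-1}$ are realized by the same triple $(S, q, X)$, and one is irreducible exactly when the other is. (In the Abelian case the same argument applies with the slight modification that swapping the shores amounts to negating the vertical direction, which still gives the same minimal flat surface.)

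For the second step, the same geometric observation shows that when $\pi$ is irreducible, $\pi$ and $\pi^{-1}$ are realized as first-return maps coming from the very same flat surface with the same transverse segment, and in particular they belong to the same connected component of the same stratum. By the theorem of Boissy and Lanneau \cite{BL}, extended Rauzy classes of irreducible generalized permutations are in bijection with connected components of strata of meromorphic quadratic differentials with at most simple poles. Thus $\pi$ and $\pi^{-1}$ must lie in the same extended Rauzy class, as required.

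The main obstacle is whether one wishes to appeal to the Boissy--Lanneau classification theorem or to stay at the purely combinatorial level. The intertwining relations \eqref{eq:inverse:and:Rauzy} already give the easier half: if $\sigma \in \mathfrak{R}_{ex}(\pi)$ is obtained from $\pi$ by some word in $a, b, c$, then $\sigma^{-1}$ is obtained from $\pi^{-1}$ by the same word with the roles of $a$ and $b$ swapped, yielding the set-theoretic identity $\mathfrak{R}_{ex}(\pi^{-1}) = \{\sigma^{-1} : \sigma \in \mathfrak{R}_{ex}(\pi)\}$. However, this alone does not imply $\pi^{-1} \in \mathfrak{R}_{ex}(\pi)$. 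A combinatorial shortcut would be to exhibit one explicit self-inverse element in the class; the hyperelliptic representatives of Proposition \ref{pr:54321} (and their analogs for quadratic strata from Section \ref{ss:H:hyp}) supply such elements in the hyperelliptic components, but producing a self-inverse element in every class without invoking the Boissy--Lanneau theorem seems to require its own construction, and is the essential difficulty of any purely combinatorial approach.
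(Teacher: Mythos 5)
There is a genuine gap, and it sits exactly at the step you treat as immediate. You claim that on a fixed $(S,q,X)$ the labelling of the two shores of the slit is a free choice, and that swapping the labels turns the associated generalized permutation $\pi$ into $\pi^{-1}$. That is not so: once the surface is oriented, the only non-canonical datum is the orientation of the segment $X$, and reversing it interchanges the two lines \emph{and} reverses the order within each line --- this is precisely the operation $c$ of Appendix~\ref{a:generalized:rauzy:operations} (``Choosing the opposite orientation of\/ $X$ we get a generalized interval-exchange transformation with generalized permutation $c(\pi)$''), not the inverse. The plain interchange of the two lines, i.e.\ $\pi\mapsto\pi^{-1}$, is realized not on $S$ but on the \emph{conjugate} surface $\bar S$, obtained by reflecting a polygonal pattern of $S$ in the horizontal axis. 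So after invoking the Boissy--Lanneau correspondence all you obtain is that $\pi$ and $\pi^{-1}$ represent the components of $S$ and of $\bar S$ respectively, and the whole content of the lemma is the nontrivial assertion that the orientation-reversing involution $S\mapsto\bar S$ preserves each connected component. That this cannot be dismissed as a relabelling is underlined by the paper's remark that $S\mapsto\bar S$ genuinely exchanges the $GL(2,\R{})$-orbits of the two rightmost square-tiled surfaces in Figure~\ref{fig:6tiled:surfaces}; no argument realizing $\pi$ and $\pi^{-1}$ on literally the same surface can be correct.

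The missing work is the case-by-case verification that $S$ and $\bar S$ lie in the same component: they obviously lie in the same stratum; hyperelliptic components are preserved because (after Veech) such surfaces admit centrally symmetric polygonal patterns, and the central symmetry of $\overline\Pi$ again gives a hyperelliptic involution with the same action on singularities; for strata $\cH(2d_1,\dots,2d_\noz)$ the parity of the spin structure is preserved since the indices mod $2$ of a canonical basis of cycles are unchanged under $S\mapsto\bar S$; and for the four exceptional quadratic strata, where no invariant is known, one uses exactly the relations \eqref{eq:inverse:and:Rauzy} that you wrote down, combined with the computed cardinalities in Table~\ref{tab:exceptional:strata}: inversion maps extended Rauzy classes bijectively to extended Rauzy classes, and since the two classes in each exceptional stratum have different cardinalities, inversion must fix each of them. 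Your closing paragraph correctly senses that \eqref{eq:inverse:and:Rauzy} alone does not give $\pi^{-1}\in\mathfrak{R}_{ex}(\pi)$ and that self-inverse representatives are only easy to exhibit in the hyperelliptic components; but the geometric shortcut you propose to bridge that gap is the incorrect identification of $\pi^{-1}$ with a relabelling on the same surface, so the proof as written does not establish the second assertion of the lemma (and the first assertion, simultaneous irreducibility, should likewise be argued via $\bar S$ rather than via $S$ itself).
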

\begin{proof}
Consider a closed flat surface $S$ represented by a meromorphic quadratic
differential $q$ with at most simple poles and some polygonal pattern $\Pi$
for $S$. We can associate to $S$ a conjugate flat surface $\bar S$, which
is obtained by identifying the corresponding pairs of sides of the polygon
$\overline\Pi$ obtained from $\Pi$ by a symmetry with respect to the
horizontal axes. The lemma above is equivalent to its geometric version
below.
\end{proof}

\begin{Lemma}
The map $S\to \bar S$ preserves connected components of the strata.
\end{Lemma}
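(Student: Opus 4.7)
The plan is to invoke the classifications of connected components of strata (Kontsevich--Zorich for Abelian differentials, Lanneau for quadratic differentials) and to verify that every invariant distinguishing components of a stratum is preserved under the conjugation $S \mapsto \bar S$. First I note that $\bar S$ arises from $S$ by the action of $\operatorname{diag}(1,-1) \in GL(2,\mathbb R)$ on the polygonal pattern; this reflection pairs up the identified sides in the same way and respects the combinatorial type of each singularity, so $\bar S$ lies in the same stratum as $S$. It then suffices to track each refined invariant.

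Invariance of hyperellipticity is immediate: the hyperelliptic involution $\tau$ of $S$ transports under complex conjugation of the Riemann surface structure to an involution $\bar\tau$ of $\bar S$, holomorphic for the conjugate complex structure, with the same combinatorial action on the set of zeroes and poles. Hence $S$ and $\bar S$ simultaneously belong to a hyperelliptic component or to its complement, both in the Abelian and in the quadratic setting. For the spin-structure parity on strata $\cH(2d_1,\dots,2d_\noz)$, I appeal to formula \eqref{eq:parity:of:spin:structure}: any loop $\gamma \subset S$ avoiding the zeros corresponds canonically to a loop $\bar\gamma \subset \bar S$, and its Gauss map is post-composed with complex conjugation of the target circle, which reverses the sign of the index. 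Since $-k \equiv k \pmod 2$, each $\ind(\gamma)$ is preserved mod $2$, so $\varphi(\bar S) = \varphi(S)$.

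It remains to address the regular/irregular invariant of the four exceptional strata $\cQ(9,-1)$, $\cQ(6,3,-1)$, $\cQ(3,3,3,-1)$, $\cQ(12)$. For the first three, the theorem of E.~Lanneau recalled in Appendix~A characterizes the regular component by adjacency to the minimal stratum $\cQ(8)$, equivalently by the existence of a saddle connection of multiplicity one joining the simple pole to a zero. Both formulations are manifestly invariant under $S\mapsto\bar S$, since saddle connections and their multiplicities are preserved under horizontal reflection, and any degeneration $S_t \to S_0 \in \cQ(8)$ conjugates to a degeneration $\bar S_t \to \bar S_0 \in \cQ(8)$. The stratum $\cQ(12)$ is the main obstacle, since no simple geometric invariant is known to distinguish its two components; here I would verify directly on the explicit cylindrical representatives of $\cQ^{\mathit{reg}}(12)$ and $\cQ^{\mathit{irr}}(12)$ listed in Table~\ref{tab:exceptional:strata} that each representative and its inverse lie in the same extended Rauzy class, by a finite combinatorial calculation with the Rauzy operations $a, b, c$ of Appendix~\ref{a:generalized:rauzy:operations}. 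Since this exhausts the list of distinguishing invariants, $S$ and $\bar S$ must belong to the same connected component of their common stratum.
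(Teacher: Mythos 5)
Your overall strategy---check that every invariant classifying the components is preserved by $S\mapsto\bar S$---is sound, and two of your three steps are fine. For hyperellipticity you transport the hyperelliptic involution through the anti-conformal map $S\to\bar S$; the paper instead invokes Veech's result that a surface in a hyperelliptic component admits a centrally-symmetric polygonal pattern, whose central symmetry survives the reflection. Both arguments work; yours uses only the definition of the components, the paper's stays closer to the polygonal/combinatorial setting it needs elsewhere. Your spin-parity argument (reflection negates each $\ind(\gamma)$, hence preserves it mod $2$) is essentially the paper's argument.

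The divergence is at the four exceptional strata, and here your proposal is not complete as written. For the genus-$3$ strata you import Lanneau's adjacency/multiplicity-one theorem, which is legitimate but heavier than necessary, and for $\cQ(12)$ you only announce a direct check that the representative of each component and its inverse lie in a common extended Rauzy class---a finite but very large computation (the classes have $146\,049$ and $881\,599$ elements) that you do not carry out, so this case is left open. The paper closes all four strata at once with a much lighter observation, available from data you already cite: by \eqref{eq:inverse:and:Rauzy} the operation $\pi\mapsto\pi^{-1}$ maps extended Rauzy classes bijectively to extended Rauzy classes, and since the two classes attached to each exceptional stratum have \emph{different} cardinalities (Table \ref{tab:exceptional:strata}), taking inverse cannot interchange them; hence each class, and so each connected component, is preserved by $S\mapsto\bar S$. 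Substituting this cardinality argument for your deferred computation (it also replaces the appeal to the adjacency theorem in genus $3$) would make your proof complete and self-contained.
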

\begin{proof}
Clearly $S$ and $\bar S$ belong to the same stratum.
By the result of W. Veech \cite{Veech:hyperelliptic:I} which immediately generalizes to quadratic differentials, a
surface in a hyperelliptic component can be represented by a centrally-symmetric polygonal pattern $\Pi$, where the central symmetry acts as a hyperelliptic involution. It is easy to see that the central symmetry of\/ $\overline\Pi$ induces on $\bar S$ a hyperelliptic involution which acts on singularities in the same way as the one associated to $S$. Hence if\/ $S$ belongs to a hyperelliptic component, so does $\bar S$.

Suppose that $S$ belongs to a stratum $\cH(2d_1,\dots,2d_\noz)$.
Consider a canonical basis of cycles on $S$ realized by smooth simple closed curves avoiding singularities. The images of these curves under the pointwise map $S\to \bar S$ represent a canonical basis of cycles on $\bar S$. The indices of the corresponding curves (see Appendix \ref{a:spin:structure}) counted modulo $2$ are the same as the indices of the original curves. Thus, the surfaces $S$ and $\bar S$ share the same parity of the spin structure, see \eqref{eq:parity:of:spin}.

Suppose that $S$ and $\bar S$ belong to one of the exceptional strata
$\cQ(3,3,3,-1)$, $\cQ(6,3,-1)$, $\cQ(9,-1)$. It follows from
\eqref{eq:inverse:and:Rauzy} that the operation of taking inverse
bijectively maps an extended Rauzy class to an extended Rauzy class. We
know from a direct computation that there are exactly two extended Rauzy
classes associated to each of the four exceptional strata, and that the
cardinalities in each pair are different, see Table
\ref{tab:exceptional:strata}. Hence, the operation of taking inverse maps
all of these extended Rauzy classes to themselves and so the geometric
realization $S\mapsto\bar S$ of this operation maps the connected
components to themselves.
\end{proof}

\begin{NNRemark}
The Lemma above cannot be extended to all closed $GL(2,\R{})$-invariant suborbifold of the moduli spaces. Counterexamples can be found among orbits of square-tiled surfaces (arithmetic Veech surfaces). Namely, the $GL(2,\R{})$-orbits of the two rightmost surfaces in Figure \ref{fig:6tiled:surfaces} are distinct and the map $S\mapsto \bar{S}$ interchanges the two orbits.
\end{NNRemark}

\begin{Definition}
A generalized permutation $\pi$ is called \emph{balanced} if the lines of\/ $\pi$ have the same length:
$$
\pi=\begin{pmatrix}
\alpha_1,\alpha_2,\dots,\alpha_p  \\
\beta_1,\beta_2, \dots, \beta_p
\end{pmatrix}.
$$
\end{Definition}

\begin{Lemma}
\label{lm:ballanced}
The Rauzy class of any irreducible generalized permutation $\pi$ contains a balanced generalized permutation.
\end{Lemma}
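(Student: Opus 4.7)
I would prove the lemma by induction on the defect $|\delta(\pi)|$, where $\delta(\pi)=p-q$ is the difference of the two line-lengths of $\pi$; the invariant $p+q=2n$ is preserved by operations $a,b$, so balance corresponds exactly to $\delta=0$. Inspecting the two cases in each Rauzy operation, length-preserving forms of $a$ and $b$ fix $\delta$, the length-changing form of $a$ raises $\delta$ by $2$, and the length-changing form of $b$ lowers $\delta$ by $2$. Hence the inductive step reduces to the following: given $\pi$ with (say) $\delta(\pi)>0$, exhibit a finite chain of Rauzy operations within $\mathfrak{R}(\pi)$ that reaches a permutation at which a length-changing $b$ is applicable; one $b$ then drops $\delta$ by $2$ and the induction proceeds.

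Writing $p=2T+M$ and $q=2B+M$, where $T$ (resp.\ $B$) counts symbols appearing twice in the top (resp.\ bottom) line and $M$ counts those appearing once in each, one has $\delta=2(T-B)$. A length-changing $b$ requires the rightmost symbol of the bottom line to be a ``$B$-symbol'' (both occurrences on the bottom). Irreducibility of $\pi$, characterized combinatorially in \cite{BL}, forces $B\ge 1$ whenever $T\ge 1$ (neither set of symbols is a proper subset of the other), so some $B$-symbol always exists in the bottom line of $\pi$ when $\delta(\pi)>0$. The central combinatorial task is therefore to apply a finite sequence of Rauzy operations that shuffles a $B$-symbol into the rightmost slot of the bottom line.

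My plan for this last step is to work geometrically. Realize $\pi$ as a suspension $(S,X)$ over a generalized interval-exchange transformation, with interval lengths chosen generic enough that the Keane condition holds. Iterating Rauzy--Veech induction on $(S,X)$ is precisely the dynamical implementation of operations $a,b$ applied to the first-return maps on a decreasing family of horizontal segments $X\supset X_1\supset X_2\supset\cdots$. A $B$-symbol is a horizontal saddle connection whose two endpoints both lie on the bottom shore of the current $X_k$, and by the minimality of the vertical flow on $S$ combined with the fact that the induction reduces $|X_k|$ to zero, every horizontal saddle connection must appear in turn as the rightmost bottom subinterval of some $X_k$; in particular, some iterate will have a $B$-symbol in the rightmost bottom slot, at which point a length-changing $b$ is applicable. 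The main obstacle is the rigorous verification that the Rauzy--Veech orbit actually visits such a configuration rather than avoiding it within the finite Rauzy class; this is a statement about the transitivity of the induction on $\mathfrak{R}(\pi)$, for which I would invoke the combinatorial analysis of Rauzy classes of generalized permutations carried out in \cite{BL}.
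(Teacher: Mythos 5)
Your overall route coincides with the paper's: reduce to producing, inside the Rauzy class, a permutation whose rightmost bottom symbol has its twin in the bottom line, then apply the length-changing form of $b$, with the transitivity input delegated to \cite{BL} in both arguments. However, as written there are two genuine gaps. First, the induction on $|\delta|$ does not terminate as stated: the chain you use to reach a permutation where the length-changing $b$ applies may itself contain length-changing $a$'s, so after the final $b$ the new permutation can have $|\delta|$ strictly larger than $|\delta(\pi)|$, and your inductive measure need not decrease. The paper needs no induction at all, thanks to a persistence observation you are missing: the length-changing $b$ inserts the migrating top symbol to the left of the twin $0_2$, so the rightmost bottom symbol and its twin remain in the bottom line; hence once the desired configuration is reached one can apply $b$ repeatedly, and since the difference of lengths is even and drops by $2$ each time, the lines equalize monotonically (here $T-B=\delta/2\ge 1$ together with $B\ge 1$ gives $T\ge 2$, so the exceptional case in which $b$ is undefined cannot occur; and if the connecting chain happens to pass through a balanced permutation one simply stops there).

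Second, your step ``some iterate will have a $B$-symbol in the rightmost bottom slot'' conflates ``a symbol which is bottom-doubled in $\pi$ eventually becomes rightmost'' with ``a symbol which is bottom-doubled \emph{at that moment} is rightmost in the bottom line.'' Along the induction the two occurrences of a symbol can migrate between the lines (each length-changing operation moves one occurrence across), so when your chosen symbol finally occupies the rightmost bottom position it might no longer be bottom-doubled; also, the fact from \cite{BL} gives only that every symbol eventually reaches the rightmost position of \emph{one} of the lines. The paper closes exactly this hole with a first-passage argument: choose a shortest chain bringing a bottom-doubled symbol $\beta_k$ to the rightmost position; since an occurrence of $\beta_k$ can leave the bottom line only by first being the rightmost bottom element (via a length-changing $a$), at the first time $\beta_k$ becomes rightmost it is rightmost in the bottom line and both its occurrences are still there. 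With these two repairs your argument becomes essentially the paper's proof, your geometric single-orbit realization playing the role of the paper's purely combinatorial appeal to \cite{BL}.
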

\begin{proof}
In the proof below we do not reenumerate the symbols of a generalized
permutation after applying operations $a$ and $b$.

Suppose that the bottom line of\/ $\pi$ is shorter than the top one. Suppose that the twin $0_2$ of the rightmost element $0_1$ in the bottom line is also located in the bottom line. Since the bottom line is shorter than the top one, the top line contains more than one pair of identical symbols, and hence operation $b$ is applicable to $\pi$. Note that by applying the operation $b$ we made
the bottom line longer and the top one shorter while the twin $0_2$ of the rightmost element $0_1$ in the bottom line stayed in the bottom line. Recursively applying this argument we eventually get a balanced permutation.

Suppose now that the twin $0_2$ of the rightmost element $0_1$ in the
bottom line is located in the top line. If our generalized permutation is a
``true'' permutation it is already balanced. If not, the bottom line
contains at least one symbol $\beta_k$ which has his twin in the bottom
line. It follows from results \cite{BL} of C. Boissy and E. Lanneau on the
dynamics of Rauzy--Veech induction that applying operations $a$ and $b$ in
all possible ways we can eventually make every symbol appear in the
rightmost position. Consider a chain of operations $a$ and $b$ which place
$\beta_k$ in the rightmost position and suppose that our chain does not
contain a shorter one placing $\beta_k$ in the rightmost position. By
construction, the corresponding generalized permutation $\pi'$ has
$\beta_k$ in the rightmost position in the bottom line and the twin of\/
$\beta_k$ also belongs to the bottom line. Hence, either our chain of
operations $a$ and $b$ already contains a balanced generalized permutation,
or we can apply our previous argument to $\pi'$ to obtain one.
\end{proof}

\section{Parity  of a spin  structure  in terms of a permutation}
\label{a:spin:structure}

Consider a permutation $\pi$ representing a stratum of Abelian differentials of the form $\cH(2d_1,\dots,2d_\noz)$.  In this section we describe how to compute the parity of the spin structure of a surface associated to the permutation $\pi$. For strata different from $\cH(2g-2)$ or $\cH(2k,2k)$, the parity of the spin structure determines a connected component $\cH^{\mathit{even}}(2d_1,\dots,2d_\noz)$ or $\cH^{\mathit{odd}}(2d_1,\dots,2d_\noz)$ represented by $\pi$.

There remains ambiguity with the strata $\cH(2g-2)$ or $\cH(2k,2k)$
since they contain three connected components when $g\ge 4$.
The parity $\phi(S)$ of the spin structure for
surfaces from hyperelliptic connected components of these strata is expressed as follows, see \cite{KZ03}:
$$
\phi(S)=\begin{cases}
\left[\frac{g+1}{2}\right]\ (mod\ 2)&
\text{for }S\in \cH_1^{\mathit{hyp}}(2g-2)\\ \\
k+1\ (mod\ 2)&
\text{for }S\in\cH_1^{\mathit{hyp}}(2k,2k)
\end{cases}
$$
where $[x]$ denotes the integer part of a number $x$.

\subsection*{Parity of the spin structure. Definition.} Consider a flat surface $S$ in a stratum $\cH(2d_1, \dots, 2d_\noz)$.
Consider a smooth simple closed curve $\gamma$ on $S$ such that $\gamma$
does not pass through singularities of the flat metric. Our flat structure
defines a trivialization of the tangent bundle to $S$ that is punctured at
the singularities. Thus, we can consider the Gauss map from $\gamma$ to a
unit circle; this map associates to a point $x$ of\/ $\gamma$ the unit
tangent vector $T_x\gamma$ at this point.

We define $\ind(\gamma)\in  \Z{}$ as the  degree of the Gauss
map. When we follow the curve tracing how the tangent vector turns with respect to the vertical direction, we observe a total change of the angle along the curve of\/ $2\pi\cdot \ind(\gamma)$.

Now  take   a   symplectic   homology   basis   $\{  a_1,  b_1, a_2, b_2, \dots, a_g, b_g\}$ in which  the  intersection  matrix  has  the
canonical form: $ a_i\circ a_j = b_i\circ b_j =0$, $ a_i\circ b_j
=  \delta_{ij}$,  $1\le i,j  \le  g$. Though  such  basis is  not
unique,  traditionally  it  is  called a \emph{canonical  basis}.
Consider  a  collection of smooth simple closed curves representing  the chosen basis. Denote them by $\alpha_i, \beta_i$ respectively.
Perturbing the curves $ \alpha_i, \beta_i$ if necessary, we can make them avoid singularities of the metric.

When  all   zeroes   of    Abelian   differential   $\omega$
representing the flat structure have only even degrees we can
define a \emph{parity of the spin structure}
\begin{equation}
\label{eq:parity:of:spin}
\phi(S)\dfn \sum_{i=1}^g
(\ind( \alpha_i)+1)(\ind( \beta_i)+1)\quad (mod\ 2).
\end{equation}

It follows from the results of  D. Johnson \cite{J} that the
parity of the  spin structure $\phi(S)$  depends neither on  the
choice of representatives nor on the  choice  of  the  canonical
homology bases.

On the other hand, by results of M. Atiyah \cite{At}
the quantity $\phi(S)$ expressed in different terms is invariant under continuous  deformations of the
flat surface inside the stratum,
which implies that it is an invariant of a connected component of the stratum (see \cite{KZ03} for details).

\subsection*{Generating family of cycles associated to an interval exchange map.} Having an irreducible permutation $\pi$ one can construct an interval exchange map $T\colon X\to X$, a flat surface $S$
(suspension over $T$) and an embedding of\/ $X$ into a horizontal leaf on $S$ such that
the first-return map of the vertical flow to the horizontal segment $X$ induces the interval-exchange transformation $T\colon X\to X$ with permutation $\pi$ (see \cite{M82,V82} and Section \ref{ss:Interval:exchange:transformations:and:Rauzy:classes}).

For every interval $X_i$ being exchanged consider a leaf of the vertical foliation launched at some interior point of\/ $X$ and follow it till the first return to $X$. Join the endpoints of the resulting vertical curve along the interval $X$. It is easy to smoothen the resulting closed path to make it everywhere transverse to the horizontal direction, see Figure \ref{fig:iet:cycles}. Denote the resulting smooth simple closed curve by $\gamma_i$. Since $\gamma_i$ is everywhere transverse to the horizontal direction, we get
\begin{equation}
\label{eq:ind:gamma:i:0}
\ind(\gamma_i)=0.
\end{equation}
We denote by $c_i$ the cycle represented by the oriented curve $\gamma_i$.

\begin{figure}[htb]
\includegraphics{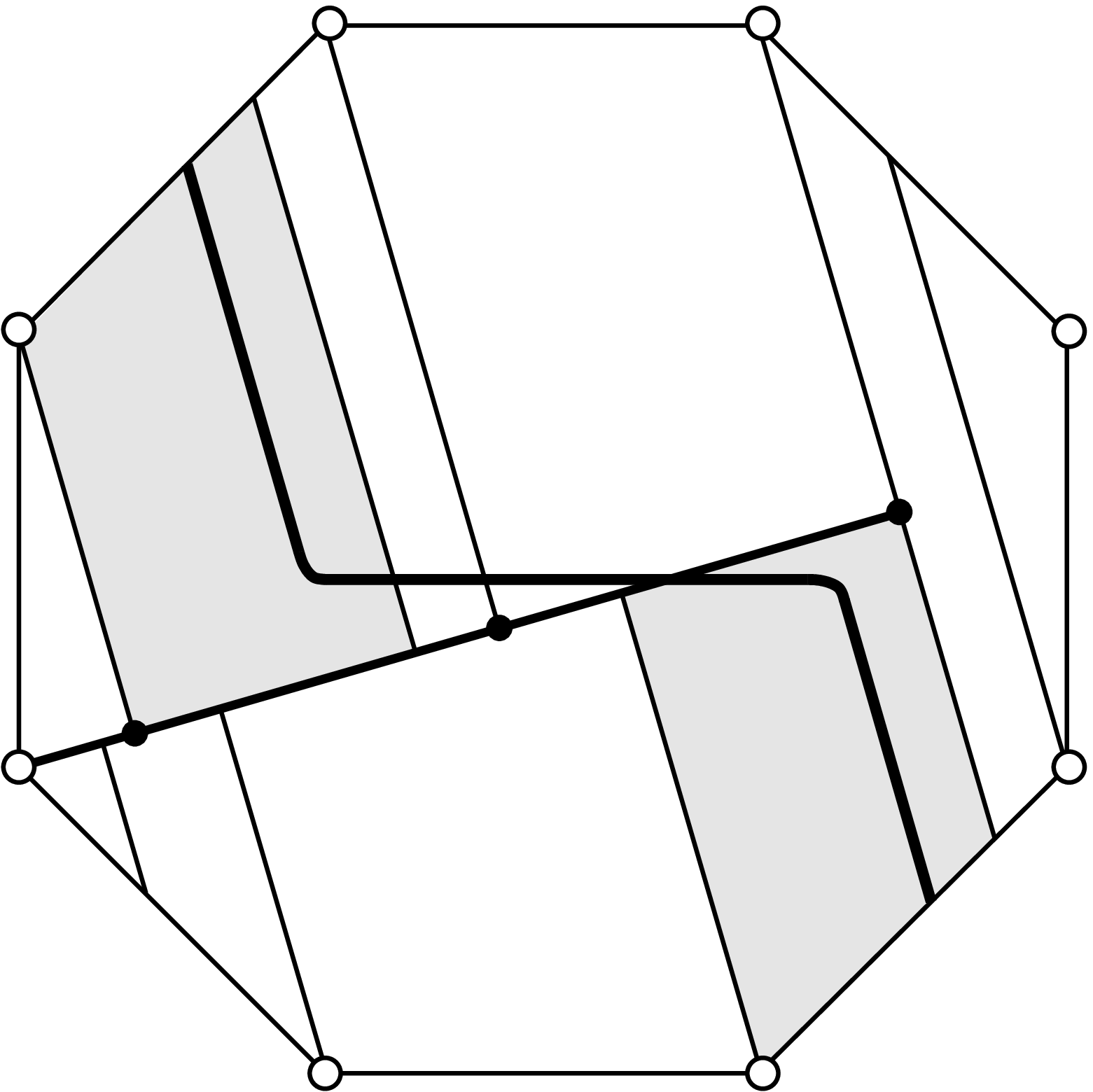}
\begin{picture}(0,0)(0,0)
\put(-47,-41){$\gamma_2$}
\end{picture}
\vspace{130bp}
\caption{
\label{fig:iet:cycles}
Construction of a generating family of cycles associated to an interval-exchange transformation.
}
\end{figure}

\begin{Lemma}
\label{lm:generating:family}
If the vertical foliation on $S$ is minimal, then the cycles $\{c_1, \dots, c_n\}$ form a generating family in $H_1(S,\Z{})$. None of them is trivial.
\end{Lemma}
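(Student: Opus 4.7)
The plan is to exploit the zippered-rectangle decomposition of $S$ associated with the suspension over $T$: the surface is realized as a union of $n$ closed rectangles $R_1,\dots,R_n$ glued along their boundaries, with $R_i$ having base $X_i\subset X$ and top identified with $X_{\pi(i)}$ via the first-return map. Each cycle $c_i$ then coincides (up to homotopy fixing its $X$-endpoints) with the core curve of $R_i$: a vertical arc $\tilde c_i$ of positive height $h_i$ through $R_i$ from a basepoint $p_i\in X_i$ to $T(p_i)\in X_{\pi(i)}$, closed up by a horizontal arc $\tilde x_i\subset X$.

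Non-triviality of each $[c_i]$ will follow from a period computation. The imaginary part $\Im\omega$ of the Abelian differential $\omega$ defining the flat structure is a globally defined closed 1-form on $S$. Along the vertical arc $\tilde c_i$ its integral equals the first-return height $h_i>0$, while along the horizontal arc $\tilde x_i\subset X$ it vanishes identically. Thus $\int_{c_i}\Im\omega=h_i\neq 0$, which precludes $c_i$ from being null-homologous in $H_1(S,\mathbb Z)$.

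For generation, I will introduce the 1-complex $L=X\cup\bigcup_{i=1}^n\tilde c_i\subset S$ and prove that the inclusion $L\hookrightarrow S$ induces a surjection $H_1(L,\mathbb Z)\twoheadrightarrow H_1(S,\mathbb Z)$. Refining the rectangle decomposition by cutting each $R_i$ along its core $\tilde c_i$ into two half-rectangles $R_i^L,R_i^R$ produces a CW structure on $S$ whose $2n$ 2-cells are these half-rectangles and whose 1-skeleton contains $L$ together with the vertical separatrix edges (and any T-junction 0-cells arising from singularities off $X$). The key observation is that, in the relative cellular chain complex $C_\ast(S,L)$, the boundary of each half-rectangle modulo $L$ reduces to the 1-chain formed by its vertical side, since the horizontal halves of $X_i$ and $X_{\pi(i)}$ and the core $\tilde c_i$ already lie in $L$. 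I claim that the resulting map $\partial_2^{\mathrm{rel}}\colon C_2(S,L)\to C_1(S,L)$ is surjective: for adjacent half-rectangles sharing (parts of) a common vertical edge, suitable $\mathbb Z$-linear combinations of their relative boundaries isolate every individual vertical 1-cell. Surjectivity forces $H_1(S,L)=0$, and the long exact sequence of the pair then yields $H_1(L)\twoheadrightarrow H_1(S)$. Choosing the basepoints so that $T(p_i)=p_{\pi(i)}$, the graph $L$ is the arc $X$ with $n$ bridging edges $\tilde c_i$; taking $X$ as a spanning (sub)tree, the fundamental cycles associated with the non-tree edges $\tilde c_i$ are exactly the $c_i$, so they generate $H_1(L)$ and, through the surjection, $H_1(S,\mathbb Z)$.

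The main obstacle will be the verification that $\partial_2^{\mathrm{rel}}$ is surjective onto $C_1(S,L)$. This is immediate when adjacent rectangles share complete vertical edges, but in the general zippered setting, where heights differ, one must combinatorially track how the vertical side of each half-rectangle decomposes into individual CW 1-cells (possibly split by T-junctions at singularities lying off $X$) and then exhibit explicit integer combinations of half-rectangles whose relative boundaries recover each single 1-cell. The minimality of the vertical flow is essential here, ensuring finite heights and that every vertical separatrix terminates at a corner or T-junction singularity after finite time. A secondary subtlety appears when both endpoints of $X$ coincide at a single singularity (as in $\cH(2g-2)$): $L$ then acquires one extra independent cycle coming from $X$ itself viewed as a loop, and one must verify that this class lies in the $\mathbb Z$-span of the $[c_i]$ in $H_1(S)$ by exhibiting an explicit 2-chain whose relative boundary realizes the required relation.
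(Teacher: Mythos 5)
The paper offers no proof of this lemma (it is explicitly left as an exercise), so your argument has to stand on its own. The nontriviality half does: $\int_{c_i}\Im\omega=h_i>0$ while $\Im\omega$ is closed, so no $c_i$ bounds. The generation half, however, has a genuine gap at its central step. The criterion you propose --- surjectivity of $\partial_2^{\mathrm{rel}}\colon C_2(S,L)\to C_1(S,L)$, with integer combinations of half-rectangle boundaries ``isolating every individual vertical 1-cell'' --- is false whenever some singularity lies off $X$, and that is the typical case: $X$ contains no singularities in its interior, so at most the two endpoints are singular, while the suspension usually has more zeroes. If $\sigma\notin X$ is such a singularity, the $0$-cell at $\sigma$ is not in $L$, so every vertical $1$-cell abutting $\sigma$ has relative boundary $\pm\sigma\neq 0$; since $\operatorname{im}\partial_2^{\mathrm{rel}}\subseteq\ker\partial_1^{\mathrm{rel}}$, such a cell can never be a relative boundary, hence cannot be ``isolated,'' and $\partial_2^{\mathrm{rel}}$ is not surjective. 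What you actually need is only the weaker equality $\operatorname{im}\partial_2^{\mathrm{rel}}=\ker\partial_1^{\mathrm{rel}}$, that is $H_1(S,L)=0$, equivalently that every component of $S\setminus L$ is an open disk. This is true, but proving it is exactly where minimality must enter, and in a different way than you use it: minimality excludes vertical saddle connections, so each vertical side of a zippered rectangle meets at most one singularity (every separatrix runs from its singularity straight to $X$), and therefore each component of $S\setminus L$ deformation retracts either onto one singularity with its prongs truncated at $X$ (a tree) or onto a single singularity-free vertical segment. The consequences of minimality you invoke (finite heights, separatrices terminating in finite time) do not rule out vertical saddle connections and do not yield this. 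So the verification you defer is not bookkeeping but the mathematical content of the lemma, and the route you sketch for it cannot work as stated.

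There is a second unresolved point, which you flag as a ``secondary subtlety'' but which is not optional. In the paper's suspensions both endpoints of $X$ are cone points, and they coincide precisely when the permutation represents a stratum with a single zero, e.g.\ $\cH(2g-2)$ --- cases in which the lemma is actually used in Appendix C. Then $X$ is a loop in $S$, $H_1(L)$ has rank $n+1$ with the extra generator $[X]$, and surjectivity of $H_1(L)\to H_1(S)$ only shows that the $[c_i]$ together with $[X]$ generate. A complete proof must still express $[X]$ as an integer combination of the $[c_i]$ (already on the torus one has $[X]=[c_2]-[c_1]$, so such a relation exists, but it has to be produced in general); you state that ``one must verify'' this and give no argument, so generation is not established in exactly these cases.
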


\begin{proof}We leave the proof as an exercise to the reader.\end{proof}

\subsection*{Algebraic definition.}
Given a permutation $\pi$ we define the following skew-sym\-met\-ric
matrix  $\Omega(\pi)$:
\begin{equation}
\Omega_{ij}(\pi)=\left\{
\begin{array}{rl}
-1 & \text{if\/ $i<j$ and $\pi^{-1}(i)>\pi^{-1}(j)$}\\
1 & \text{if\/ $i>j$ and $\pi^{-1}(i)<\pi^{-1}(j)$}\\
0 & \text{otherwise}
\end{array}
\right.
\end{equation}

\begin{Lemma} Let $T\colon X\to X$ be the interval-exchange transformation induced by the vertical flow on a surface $S$. Let $\pi$ be the associated permutation, and let $c_1, \dots, c_n$ be the ``first return cycles'' as in Figure \ref{fig:iet:cycles}.
The matrix $\Omega(\pi)$ defines the intersection numbers of the cycles $\{c_1, \dots, c_n\}$:
$$
c_i\cdot c_j =\Omega_{ij}(\pi)
$$
\end{Lemma}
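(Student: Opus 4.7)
The plan is to compute $c_i\cdot c_j$ by placing the representatives $\gamma_i,\gamma_j$ in transverse position and counting signed intersections. Recall that $\gamma_k=L_k\cup A_k$, where $L_k$ is the vertical leaf segment from $p_k$ to $q_k=T(p_k)$ and $A_k$ is the horizontal arc from $q_k$ back to $p_k$ along $X$. Since distinct leaves of the vertical foliation are disjoint, $L_i$ and $L_j$ never meet, so every intersection of $\gamma_i$ with $\gamma_j$ must come from a leaf of one cycle meeting the horizontal arc of the other.

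First I would put the pair $\gamma_i,\gamma_j$ in general position. Both arcs $A_i,A_j$ initially lie on $X$ and must be lifted off. I would push each arc $A_k$ slightly onto one fixed side of $X$, to distinct small heights $\epsilon_k$; since the leaf $L_k$ leaves $X$ on the positive side at $p_k$ and returns from the negative side at $q_k$, the push has to be combined with a consistent adjustment of the leaf endpoints so that the perturbed curve $\widetilde\gamma_k$ closes up smoothly and remains homologous to $\gamma_k$. A direct attempt in which only the arc is pushed while the leaf is kept rigid fails to produce a closed loop; this subtlety is essential for getting the correct intersection count.

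After the perturbation, the only candidate intersections of $\widetilde\gamma_i$ with $\widetilde\gamma_j$ sit above one of the four points $p_i,q_i,p_j,q_j\in X$: each is an intersection of a leaf (locally vertical) with a perturbed arc (locally horizontal), and it is present iff the corresponding $X$-endpoint of one cycle lies in the $x$-interval $[\min(p_l,q_l),\max(p_l,q_l)]$ spanned by the arc of the other cycle $\widetilde\gamma_l$. The sign of each present intersection is determined by two data: whether the leaf is going upward (near $p_k$) or downward (near $q_k$), and the direction $\operatorname{sign}(p_l-q_l)$ in which $A_l$ runs from $q_l$ to $p_l$.

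The final step is to translate the geometry into the permutation. Since $p_k$ is an interior point of the $k$-th subinterval of $X$ in the ``before'' ordering and $q_k=T(p_k)$ is an interior point of the subinterval at position $\pi^{-1}(k)$ in the ``after'' ordering, the relative order of the four points $p_i,q_i,p_j,q_j$ on $X$ is determined, for generic choices of the lengths $|X_k|$, by the quadruple $(i,j,\pi^{-1}(i),\pi^{-1}(j))$. Collecting the signed contributions of the (at most four) candidate intersections, the sum collapses to the stated formula: $c_i\cdot c_j$ equals $-1$ when $i<j$ and $\pi^{-1}(i)>\pi^{-1}(j)$ (the intervals $X_i, X_j$ swap their relative order under $T$), equals $+1$ in the antisymmetric case, and vanishes when they keep their relative order.

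The main obstacle is the sign bookkeeping through the four candidate intersections combined with the careful construction of a perturbation that keeps $\widetilde\gamma_k$ closed. A more conceptual alternative is to realize $c_k$ in the suspension polygon as a path through the identified pair of $v_k$-edges and to compute $c_i\cdot c_j$ as the signed count of crossings inside the polygon; when the polygon happens to be convex this reduces to a clean interleaving condition on the cyclic arrangement of the $v_l$-edges around the polygon boundary, but for a general $\pi$ the polygon is non-convex and the combinatorial description must be adjusted to take the concavities into account.
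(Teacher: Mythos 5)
Your overall strategy is the one the paper intends (its proof is literally an appeal to Figure \ref{fig:iet:cycles}): the leaf portions of distinct cycles are disjoint, so after pushing the return arcs off $X$ all intersections of $\widetilde\gamma_i$ with $\widetilde\gamma_j$ are crossings of a vertical piece of one curve with the nearly horizontal arc of the other, localized over finitely many of the points $p_i,q_i,p_j,q_j$, and the signed count must then be matched with the inversion pattern of the permutation. That localization, and your remark that the perturbation must be done so that each $\widetilde\gamma_k$ actually closes up, are correct.

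The gap is in the concluding step. The claim that ``the relative order of the four points $p_i,q_i,p_j,q_j$ on $X$ is determined, for generic choices of the lengths $|X_k|$, by the quadruple $(i,j,\pi^{-1}(i),\pi^{-1}(j))$'' is false: the combinatorics fixes the order of $p_i$ versus $p_j$ and of $q_i$ versus $q_j$, but the interleaving of the $p$'s with the $q$'s genuinely depends on the lengths. For instance, for the $3$-interval exchange whose after-order is $X_2,X_3,X_1$ and the pair $(i,j)=(1,2)$, the length vector $(0.8,0.1,0.1)$ puts $q_1$ inside the arc of $\gamma_2$ and $p_2$ outside the arc of $\gamma_1$, while $(0.1,0.1,0.8)$ does the opposite; individual crossings appear and disappear as the lengths vary and only the signed total is invariant. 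So ``the sum collapses to the stated formula'' is exactly what still has to be proved: either carry out the case analysis over all admissible interleavings of the four points and verify that the signed sum equals the inversion indicator in every case, or invoke invariance of the homological intersection number under deformation of the base points and lengths and compute in one convenient configuration. The same small example is also needed to pin down your conventions: with the convention of Section \ref{ss:Interval:exchange:transformations:and:Rauzy:classes} (the subintervals appear after the exchange in the order $X_{\pi^{-1}(1)},\dots,X_{\pi^{-1}(n)}$), the condition ``$X_i,X_j$ swap their relative order under $T$'' --- which is what your geometric count actually detects --- reads $\pi(i)>\pi(j)$, whereas your displayed condition is $\pi^{-1}(i)>\pi^{-1}(j)$; these differ for non-involutive $\pi$, so your parenthetical gloss and your stated formula are not the same statement, and a worked $3$-interval case is the quickest way to see which one your sign bookkeeping really produces.
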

\begin{proof}
The proof immediately follows from Figure \ref{fig:iet:cycles}.
\end{proof}

From now on it is convenient to pass to the field $\Zz$.
Note that the intersection form $\Omega$ becomes symmetric over $\Zz$.

Define a function $\Phi\colon H_1(S,\Zz)\to \Zz$ as follows. Fixing a cycle $c$, represent it by a simple closed curve $\gamma$. Deforming $\gamma$ if necessary, we may assume it avoids singularities of the metric. Let
\begin{equation}
\label{eq:definition:Phi:c}
\Phi(c)\dfn \ind(\gamma)+1 \mod 2
\end{equation}

\noindent
The following theorem is a corollary of a corresponding theorem of D. Johnson, see \cite{J}.

\begin{NNTheorem}
The function $\Phi$ is well-defined on $H_1(S,\Zz)$. It is a quadratic form associated to the bilinear form $\Omega$ in the following sense:
\begin{equation}
\label{eq:Phi}
\Phi(c+c')=\Phi(c)+\Phi(c')+\Omega(c,c')
\end{equation}
\end{NNTheorem}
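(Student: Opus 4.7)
The plan is to follow D.~Johnson's original argument \cite{J}, adapted to our flat-surface setting. The hypothesis that $\omega$ has only even-degree zeroes is essential: a small loop $\delta$ encircling a single zero of degree $2d$ satisfies $\ind(\delta) = 1 - 2d$ (by a direct Gauss--Bonnet computation on a disk with one conical singularity), so $\Phi(\delta) = \ind(\delta) + 1 \equiv 0 \pmod 2$. This is precisely the parity condition forcing $\Phi$ to vanish on null-homologous simple loops, without which $\Phi$ would not descend to homology.

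First I would establish the quadratic relation \eqref{eq:Phi} for pairs of simple closed curves. Let $\gamma,\gamma'$ be smooth simple closed curves representing $c,c'$, perturbed to meet transversely at points $p_1,\dots,p_k$ away from the singularities; by construction $\Omega(c,c') \equiv k \pmod 2$. At each $p_i$ perform a local surgery: replace the crossing by two disjoint arcs respecting the orientations of $\gamma$ and $\gamma'$. The result is a disjoint family $\Gamma$ of smooth simple closed curves representing $c+c'$ in $H_1(S,\Zz)$. A local Gauss--Bonnet computation near $p_i$ shows that each surgery shifts the quantity $\sum_j (\ind(\cdot)+1) \pmod 2$ (summed over components) by exactly $1$. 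Summing over all $k$ intersection points gives
$$
\Phi(c+c') \equiv \Phi(c) + \Phi(c') + k \equiv \Phi(c) + \Phi(c') + \Omega(c,c') \pmod 2,
$$
which is the relation \eqref{eq:Phi}.

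Second, I would promote this to a well-defined function on $H_1(S,\Zz)$. By Lemma \ref{lm:generating:family} the first-return cycles $c_1,\dots,c_n$ generate $H_1(S,\mathbb{Z})$, and \eqref{eq:ind:gamma:i:0} gives $\Phi(c_i) = 1$ for every $i$. Together with the quadratic relation just proved, this determines $\Phi$ uniquely on $H_1(S,\Zz)$ in terms of the values $\Phi(c_i)$ and the bilinear form $\Omega$. Independence of \eqref{eq:definition:Phi:c} on the representative then follows by applying the surgery identity to two competing simple closed representatives $\gamma_1,\gamma_2$ of the same homology class $c$: their symmetric difference bounds a 2-chain, and summing the local parity contributions along this 2-chain (which vanish thanks to the even-degree parity condition of the first paragraph) yields $\ind(\gamma_1) \equiv \ind(\gamma_2) \pmod 2$. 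Thus $\Phi$ descends to a well-defined function on $H_1(S,\Zz)$, and the quadratic identity \eqref{eq:Phi} proved in step one is exactly the claimed formula.

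The main obstacle is the local surgery identity of the first step: verifying that a single transverse-intersection surgery shifts $\sum(\ind+1) \pmod 2$ by precisely $1$ is Johnson's fundamental local computation for quadratic refinements of the intersection form, and requires a careful local analysis of how the Gauss map is affected when the two crossing arcs are smoothed apart. Once this is settled, the rest is homological bookkeeping built on Lemma \ref{lm:generating:family} and the explicit transverse representatives $\gamma_i$ for the generating cycles.
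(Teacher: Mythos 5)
The paper contains no proof of this statement to compare against: it is quoted as a corollary of D.~Johnson's theorem \cite{J}, full stop. What you have written is, in outline, a reconstruction of Johnson's argument, and the two ingredients you isolate are the right ones: evenness of all degrees forces a small loop around any zero to have odd index, so $\Phi$ vanishes mod $2$ on such loops (your value $\ind(\delta)=1-2d$ versus the usual $2d+1$ is only an orientation convention and is irrelevant mod $2$), and resolving transverse crossings is indeed the mechanism behind the quadratic relation \eqref{eq:Phi}.

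As a standalone proof, however, the proposal defers exactly the two points that \emph{are} Johnson's theorem. First, the local surgery identity --- that smoothing one crossing shifts $\sum(\ind+1)$ by $1$ mod $2$ --- is asserted rather than proved, as you yourself note; moreover, after surgery the class $c+c'$ is represented by a disjoint multicurve, so before \eqref{eq:Phi} even makes sense you must define $\Phi$ on multicurves and show it is additive over components and consistent with the single-curve definition \eqref{eq:definition:Phi:c}. Second, your step two is circular as written: you invoke the quadratic relation to ``determine $\Phi$ uniquely'' from $\Phi(c_i)=1$ and Lemma \ref{lm:generating:family}, but that relation was established only for (multi)curve representatives and presupposes that $\Phi$ is well defined on homology classes, which is what you are trying to prove. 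The standard resolution (Johnson's) is to work with winding numbers of arbitrary multicurves relative to the horizontal direction and prove homology invariance directly: a null-homologous multicurve, made embedded, bounds a subsurface, and Gauss--Bonnet on that subsurface together with the cone angles $2\pi(2d_i+1)$ gives the parity statement; your sentence about the symmetric difference bounding a $2$-chain gestures at this but does not carry it out (in particular the two representatives must first be made disjoint, e.g.\ by the same surgery move). Since the paper itself simply cites \cite{J}, either doing likewise or completing these two steps in full would be acceptable; the sketch as it stands does not yet close them.
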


Recall that $\Omega$ is the intersection form, $\Omega(c,c')=c\cdot c'$. Note also that the parity $\phi(S)$ of the spin structure defined by \eqref{eq:parity:of:spin} is expressed in terms of the quadratic form $\Phi$:
$$
\phi(S)=\sum_{i=1}^g \Phi(a_i)\,\Phi(b_i)
$$
where  $\{a_1, b_1, a_2, b_2, \dots, a_g, b_g\}$ is a canonical basis of cycles in $H_1(S,\Zz)$.
In  other  words, the parity $\phi(S)$ of the spin structure of a flat surface $S$ associated to the permutation $\pi$ is  equal to the
Arf-invariant of the quadratic form $\Phi$ on
$H_1(S,\Zz)$ (see \cite{J,Arf}).

\subsection*{Orthogonalization.}
The problem of evaluation of\/ $\phi(S)$ can be reformulated now in terms
of linear algebra over the field $\Zz$. We have a generating family of
vectors $\{c_1, \dots, c_n\}$ in $H_1(S,\Zz)\simeq \left(\Zz\right)^{2g}$.
By the construction suggested by Figure \ref{fig:iet:cycles}, all these
cycles can be represented by curves $\gamma_i$ that are everywhere
transverse to the horizontal direction. Thus, applying
\eqref{eq:ind:gamma:i:0} to definition \eqref{eq:definition:Phi:c} of\/
$\Phi(c_i)$ we obtain the values of\/ $\Phi$ on the vectors from this
family:
$$
\Phi(c_i)=1   \qquad\text{for } i=1, \dots, n
$$

We also know the values of the symplectic bilinear form $\Omega$ on every
pair of vectors in this family. It remains to apply an orthogonalization
procedure to construct a canonical basis $\{a_1, b_1, a_2, b_2, \dots, a_g,
b_g\}$. As we construct new vectors, we can keep track of the values of the
quadratic form $\Phi$ on them by repeatedly applying formula
\eqref{eq:Phi}. Since we work over the field $\Zz$ the formulae are
especially simple.

We let $a_1\dfn c_1$. It follows from Lemma \ref{lm:generating:family} that we can find an index $j$, where $1\le j\le n$, such that $\Omega_{1j}\neq 0$. We let $b_1\dfn c_j$. Now we modify the remaining vectors to make them orthogonal to $a_1, b_1$. Recall that we work over $\Zz$.
$$
c'_i\dfn c_i + \Omega_{ij} c_1 + \Omega_{i1} c_j \quad\text{for }i=2,\dots,n,\quad\text{ where } i\neq j.
$$
Using \eqref{eq:Phi} and simplifying the resulting expression we compute the values of\/ $\Phi$ on these new vectors:
$$
\Phi(c'_i)\dfn \Phi(c_i) + \Omega_{ij}\Phi(c_1)+ \Omega_{i1}\Phi(c_j) + \Omega_{i1} \Omega_{ij}.
$$
Finally we compute the intersection matrix for the resulting family
of vectors $\{c'_2, \dots, c'_{j-1},c'_{j+1}, \dots, c'_n\}$. After simplification, we get:
$$
c'_k\cdot c'_l = \Omega_{kl} + \Omega_{k1} \Omega_{lj} +
\Omega_{kj} \Omega_{l1}.
$$

Now let us remove from our family those vectors $c'_i$ which are orthogonal to all other vectors of the family. If the resulting family is nonempty, reenumerate the vectors from this new family and proceed by induction.

\section{Rauzy  classes  for  small genera}
\label{a:tables}

For genus $g=3$ the stratum of Abelian differentials of maximal dimension,
\ie the principal stratum, has dimension 9. Thus, using extended Rauzy
classes to describe connected components of the strata we deal with
permutations of at most 9 elements, provided we study genera $g=2$ and
$g=3$. The number of such permutations is small enough to construct the
Rauzy classes explicitly (using a computer, of course).

Note that for the stratum  $\mathcal{H}(4)$  in genus $3$, the presence  of  two
different  extended  Rauzy  classes  was  proved  by W. A. Veech  in \cite{V90};
P. Arnoux proved that there are  three  different  extended  Rauzy
classes  corresponding  to  the   stratum   $\mathcal{H}(6)$.

In the tables below we  present the list of  all  Rauzy  classes
determined  by   nondegenerate ``true''  permutations  of  at  most  $9$ elements and by ``generalized'' permutations of at most $6$ elements, see \cite{KZ97}. We indicate hyperellipticity and parity of the spin
structure of  corresponding  components  when  they  are defined.
Horizontal lines separate extended Rauzy classes.
For ``true'' permutations $\pi$ we present only the second line in the canonical enumeration
$$
\begin{pmatrix}
\medskip
1&2&\dots&n\\
\pi^{-1}(1)&\pi^{-1}(2)&\dots&\pi^{-1}(n)
\end{pmatrix}.
$$

Recall that a (generalized) permutation representing a stratum
$\cH(d_1,\dots,d_\noz)$ or $\cQ(d_1,\dots,d_\noz)$ has a singularity of degree $d_1$ at the left endpoint of\/ $X$. This is the reason why, say, the stratum $\cQ(2,1,-1^3)$ appears in our table three times
(see also Conjecture \ref{conj:extended:equals:k:Rauzy:calsses}).

The reader can find a \emph{Mathematica} script generating a Rauzy class
and an extended Rauzy class from a given generalized permutation on the
author's web site. The same web page contains scripts realizing most of the
algorithms described in the present paper, including the ones which
construct cylindrical generalized permutations defining Jenkins--Strebel
differentials with a single cylinder representing a given connected
component of a given stratum of Abelian or quadratic differentials.

\begin{NNRemark}
By convention we identify generalized permutations which can be obtained
one from another by reenumerations. Studying the dynamics of Rauzy
induction it is sometimes more natural to avoid reenumeration, see
\cite{MMY,Y03,Y05}.

However, under the second convention the Rauzy classes become larger and
require more space in computer experiments. The cardinalities of Rauzy
classes in the tables below are given under the first convention.
\end{NNRemark}

\pagebreak

\vspace*{-0.9truecm}

{\small
$$
\begin{array}{|c|c|c|c|}
\multicolumn{4}{c}{\text{Strata of Abelian differentials}}\\
[-\halfbls]
\multicolumn{4}{c}{}\\
\multicolumn{4}{c}{\text{Genus }g=2}\\
\hline &&& \\ [-\halfbls]
\multicolumn{1}{|c|}{\text{Representative}}&
\text{Cardinality}&
\text{Degrees}&
\text{Hyperelliptic}\\
\multicolumn{1}{|c|}{\text{of Rauzy class}}&
\text{of Rauzy class}&
\text{of zeros}&
\text{or spin structure}\\
[-\halfbls] &&&\\
\hline &&& \\ [-\halfbls]
(4,3,2,1)          & 7        & (2)      &\textit{hyperelliptic} \\
[-\halfbls] &&&\\ \hline &&& \\ [-\halfbls]
(5,4,3,2,1)        & 15       & (1,1)    &\textit{hyperelliptic} \\
[-\halfbls] &&&\\
\hline
\multicolumn{4}{c}{}\\
\multicolumn{4}{c}{\text{Genus }g=3}\\
[-\halfbls]
\multicolumn{4}{c}{}\\
\hline &&& \\ [-\halfbls]
(6,5,4,3,2,1)      & 31      & (4)      &\textit{hyperelliptic} \\
[-\halfbls] &&&\\ \hline &&& \\ [-\halfbls]
(4, 3, 6, 5, 2, 1)     & 134      & (4)      &\textit{odd} \\
[-\halfbls] &&&\\ \hline &&& \\ [-\halfbls]
(4, 3, 7, 6, 5, 2, 1)   & 509     & (3, 1)   &\\
[-\halfbls]       &         & & -\\[-\halfbls]
(5, 4, 3, 7, 6, 2, 1)   & 261     & (1, 3)   &\\
[-\halfbls] &&&\\ \hline &&& \\ [-\halfbls]
(7,6,5,4,3,2,1)    & 63     &  (2, 2)   &\textit{hyperelliptic} \\
[-\halfbls] &&&\\ \hline &&& \\ [-\halfbls]
(4, 3, 5, 7, 6, 2, 1)   & 294     & (2, 2)   &\textit{odd}  \\
[-\halfbls] &&&\\ \hline &&& \\ [-\halfbls]
(5, 4, 3, 8, 7, 6, 2, 1) & 1258    & (1, 2, 1)&\\
[-\halfbls]       &         & & - \\[-\halfbls]
(4, 3, 5, 8, 7, 6, 2, 1) & 919     & (2, 1, 1)&\\
[-\halfbls] &&&\\ \hline &&& \\ [-\halfbls]
(5, 4, 3, 6, 9, 8, 7, 2, 1)    & 1255     & (1,1,1,1)& - \\
[-\halfbls] &&&\\
\hline
\multicolumn{4}{c}{}\\
\multicolumn{4}{c}{\text{Genus }g=4}\\
[-\halfbls]
\multicolumn{4}{c}{}\\
\hline &&& \\ [-\halfbls]
(8,7,6,5,4,3,2,1)     & 127   & (6)      &\textit{hyperelliptic} \\
[-\halfbls] &&&\\
\hline &&& \\ [-\halfbls]
(6, 5, 4, 3, 8, 7, 2, 1)     & 2327  & (6)      &\textit{even} \\
[-\halfbls] &&&\\
\hline &&& \\ [-\halfbls]
(4, 3, 6, 5, 8, 7, 2, 1)     & 5209  & (6)      &\textit{odd} \\
[-\halfbls] &&&\\
\hline &&& \\ [-\halfbls]
(5, 4, 3, 7, 6, 9, 8, 2, 1)   & 10543 & (1,5)    &     \\
[-\halfbls]           &       &          & -   \\[-\halfbls]
(4, 3, 6, 5, 9, 8, 7, 2, 1)   & 31031 & (5,1)    &     \\
[-\halfbls] &&&\\
\hline &&& \\ [-\halfbls]
(7, 6, 5, 4, 3, 9, 8, 2, 1)   & 3954  & (2,4)    &     \\
[-\halfbls]           &       &          & \textit{even} \\[-\halfbls]
(6, 5, 4, 3, 7, 9, 8, 2, 1)   & 6614  & (4,2)    &     \\
[-\halfbls] &&&\\
\hline &&& \\ [-\halfbls]
(4, 3, 5, 7, 6, 9, 8, 2, 1)   & 8797  & (2,4)    &     \\
[-\halfbls]           &       &          & \textit{odd}  \\[-\halfbls]
(4, 3, 6, 5, 7, 9, 8, 2, 1)   & 14709 & (4,2)    &     \\
[-\halfbls] &&&\\
\hline &&& \\ [-\halfbls]
(9,8,7,6,5,4,3,2,1)   & 255   & (3,3)    &\textit{hyperelliptic} \\
[-\halfbls] &&&\\
\hline &&& \\ [-\halfbls]
(4, 3, 7, 6, 5, 9, 8, 2, 1)   & 15568 & (3,3)    & - \\
[-\halfbls] &&&\\
\hline
\dots & \dots & \dots & \dots \\
\hline\end{array}
$$
}

\vspace*{-.5truecm}
\centerline{\small Strata of quadratic differentials}
\vspace*{-.5truecm}
{\small
$$
\begin{array}{|c|c|c|}
%\multicolumn{3}{c}{}\\
\multicolumn{3}{c}{\text{Genus }g=0}\\
[-\halfbls]
\multicolumn{3}{c}{}\\
\hline && \\ [-\halfbls]
\begin{pmatrix}1, 2, 2\\3, 3, 1\end{pmatrix} & 4 & (-1^4)\\
[-\halfbls] &&\\
\hline && \\ [-\halfbls]
\begin{pmatrix}1, 2, 2\\3, 3, 4, 4, 5, 5, 1\end{pmatrix} & 10 & (1, -1^5)\\[-\halfbls] &&\\
\begin{pmatrix}1, 2, 3, 3, 4, 4, 2\\5, 5, 1\end{pmatrix} & 22 & (-1^5, 1)\\
[-\halfbls] &&\\
\hline && \\ [-\halfbls]
\begin{pmatrix}1, 2, 2\\3, 3, 4, 4, 5, 5, 6, 6, 1\end{pmatrix} & 13 & (2, -1^6)\\[-\halfbls] &&\\
\begin{pmatrix}1, 2, 3, 3, 4, 4, 5, 5, 2\\6, 6, 1\end{pmatrix} & 28 & (-1^6, 2)\\
[-\halfbls] &&\\
\hline
\multicolumn{3}{c}{}\\
\multicolumn{3}{c}{\text{Genus }g=1}\\
[-\halfbls]
\multicolumn{3}{c}{}\\
\hline && \\ [-\halfbls]
\begin{pmatrix}1, 2, 3, 3\\2, 4, 4, 1\end{pmatrix} & 43 & (2, -1^2)\\[-\halfbls] &&\\
\begin{pmatrix}1, 2, 3, 3\\4, 2, 4, 1\end{pmatrix} & 20 & (-1^2, 2)\\
[-\halfbls] &&\\
\hline && \\ [-\halfbls]
\begin{pmatrix}1, 2, 3, 3, 4, 4\\2, 5, 5, 1\end{pmatrix} & 198 & (3, -1^3)\\[-\halfbls] &&\\
\begin{pmatrix}1, 2, 3, 3, 4, 4\\5, 2, 5, 1\end{pmatrix} & 120 & (-1^3, 3)\\
[-\halfbls] &&\\
\hline && \\ [-\halfbls]
\begin{pmatrix}1, 2, 3, 3, 4, 4, 5, 5\\2, 6, 6, 1\end{pmatrix} & 596 & (4, -1^4)\\[-\halfbls] &&\\
\begin{pmatrix}1, 2, 3, 3, 4, 4, 5, 5\\6, 2, 6, 1\end{pmatrix} & 440 & (-1^4, 4)\\
[-\halfbls] &&\\
\hline && \\ [-\halfbls]
\begin{pmatrix}1, 2, 3, 4, 4\\3, 2, 5, 5, 1\end{pmatrix} & 128 & (1^2, -1^2)\\[-\halfbls] &&\\
\begin{pmatrix}1, 2, 3, 4, 4\\5, 3, 2, 5, 1\end{pmatrix} & 34 & (-1^2, 1^2)\\
[-\halfbls] &&\\
\hline && \\ [-\halfbls]
\begin{pmatrix}1, 2, 3, 3, 4, 5, 5\\4, 2, 6, 6, 1\end{pmatrix} & 714 & (2, 1, -1^3)\\[-\halfbls] &&\\
\begin{pmatrix}1, 2, 3, 4, 4, 5, 5\\3, 2, 6, 6, 1\end{pmatrix} & 514 & (1, 2, -1^3)\\[-\halfbls] &&\\
\begin{pmatrix}1, 2, 3, 3, 4, 5, 5\\6, 4, 2, 6, 1\end{pmatrix} & 510 & (-1^3, 2, 1)\\
[-\halfbls] &&\\
\hline
\end{array}
\qquad
\begin{array}{|c|c|c|c|}
\multicolumn{4}{c}{\text{Genus }g=2}\\
[-\halfbls]
\multicolumn{4}{c}{}\\
\hline &&& \\ [-\halfbls]
\begin{pmatrix}1, 2, 3, 2, 4\\4, 5, 5, 3, 1\end{pmatrix} & 440 & (5, -1)&\\[-\halfbls] &&&\\
\begin{pmatrix}1, 2, 3, 2, 4\\5, 3, 4, 5, 1\end{pmatrix} & 54 & (-1, 5)&\\
[-\halfbls] &&&\\
\hline &&& \\ [-\halfbls]
\begin{pmatrix}1, 2, 3, 2, 4\\4, 5, 5, 6, 6, 3, 1\end{pmatrix} & 4832 & (6, -1^2)&\textit{non}\\[-\halfbls] &&&\\
\begin{pmatrix}1, 2, 3, 2, 4\\5, 3, 4, 6, 6, 5, 1\end{pmatrix} & 1118 & (-1^2, 6)&\textit{non}\\
[-\halfbls] &&&\\
\hline &&& \\ [-\halfbls]
\begin{pmatrix}1, 2, 2, 3, 4, 5\\5, 4, 3, 6, 6, 1\end{pmatrix} & 347 & (6, -1^2)&\textit{hyp}\\[-\halfbls] &&&\\
\begin{pmatrix}1, 2, 3, 4, 5, 2\\6, 5, 4, 3, 6, 1\end{pmatrix} &  60 & (-1^2, 6)&\textit{hyp}\\
[-\halfbls] &&&\\
\hline &&& \\ [-\halfbls]
\begin{pmatrix}1, 2, 3, 2, 4\\5, 4, 5, 3, 1\end{pmatrix} & 73 & (2^2)&\\
[-\halfbls] &&&\\
\hline &&& \\ [-\halfbls]
\begin{pmatrix}1, 2, 3, 2, 4\\5, 4, 5, 6, 6, 3, 1\end{pmatrix} & 1666 & (3, 2, -1)&\\[-\halfbls] &&&\\
\begin{pmatrix}1, 2, 3, 2, 4\\5, 4, 6, 6, 5, 3, 1\end{pmatrix} & 1348 & (2, 3, -1)&\\[-\halfbls] &&&\\
\begin{pmatrix}1, 2, 3, 2, 4\\5, 3, 6, 4, 6, 5, 1\end{pmatrix} & 294 & (-1, 3, 2)&\\
[-\halfbls] &&&\\
\hline &&& \\ [-\halfbls]
\begin{pmatrix}1, 2, 3, 2, 4, 5\\5, 4, 6, 6, 3, 1\end{pmatrix} & 2062 & (4, 1, -1)&\\[-\halfbls] &&&\\
\begin{pmatrix}1, 2, 3, 4, 2, 5\\3, 5, 6, 6, 4, 1\end{pmatrix} & 1076 & (1, 4, -1)&\\[-\halfbls] &&&\\
\begin{pmatrix}1, 2, 3, 2, 4, 5\\6, 3, 5, 4, 6, 1\end{pmatrix} & 260 & (-1, 4, 1)&\\
[-\halfbls] &&&\\
\hline &&& \\ [-\halfbls]
\begin{pmatrix}1, 2, 3, 2, 4, 5\\6, 5, 4, 6, 3, 1\end{pmatrix} & 125 & (2, 1^2)&\\[-\halfbls] &&&\\
\begin{pmatrix}1, 2, 3, 4, 2, 5\\3, 6, 5, 6, 4, 1\end{pmatrix} & 220 & (1^2, 2)&\\
[-\halfbls] &&&\\
\hline
\multicolumn{4}{c}{}\\
\multicolumn{4}{c}{\text{Genus }g=3}\\
[-\halfbls]
\multicolumn{4}{c}{}\\
\hline &&& \\ [-\halfbls]
\begin{pmatrix}1, 2, 3, 2, 3, 4\\5, 6, 5, 6, 4, 1\end{pmatrix} & 2590 & (8)&\\
[-\halfbls] &&&\\
\hline
\end{array}
$$
}

\acknowledgment

This paper has grown from experiments performed at IHES in 1995--1996 by {M. Kontsevich} and the author, see \cite{KZ97}. In particular, the initial version of this paper was planned as an appendix to the paper \cite{KZ03}.

I would like to thank Maxim Kontsevich for numerous valuable discussions of this subject; his ideas were at the origin of the study of the
combinatorics, geometry and dynamics of generalized interval-exchange transformations.

I am very much indebted to Corentin Boissy and to Erwan Lanneau who have constructed a rigorous theory of generalized permutations and of their Rauzy classes. Without their work the present paper would make no sense.

I would like to thank Giovanni Forni, Boris Hasselblatt, Erwan Lanneau and the referee for an extremely careful reading of the manuscript and for numerous
helpful comments.

I highly appreciate hospitality of Institut des Hautes \'Etudes Scientifiques and of Max--Planck--Institut f\"ur Mathematik at Bonn while preparing this paper.

\end{document}